\documentclass[11pt,reqno]{amsart}

\usepackage[T1]{fontenc}
\usepackage[utf8]{inputenc}
\usepackage{lmodern}
\usepackage{microtype}
\usepackage{mathtools,amssymb,mathrsfs}
\usepackage{amsthm}
\usepackage{aliascnt}
\usepackage{enumitem}
\usepackage{etoolbox}
\usepackage{xcolor}
\usepackage[margin=1.15in]{geometry}
\usepackage[colorlinks=true,linkcolor=blue!55!black,citecolor=green!40!black,urlcolor=blue!65!black]{hyperref}
\usepackage[nameinlink,capitalise,noabbrev]{cleveref}
\numberwithin{equation}{section}
\setlist[enumerate]{leftmargin=2.1em,itemsep=2pt,topsep=4pt}
\AtBeginEnvironment{thebibliography}{\setlength{\itemsep}{0pt}\setlength{\parsep}{0pt}}

\newtheorem{theorem}{Theorem}[section]
\newaliascnt{proposition}{theorem}
\newtheorem{proposition}[proposition]{Proposition}
\aliascntresetthe{proposition}
\newaliascnt{lemma}{theorem}
\newtheorem{lemma}[lemma]{Lemma}
\aliascntresetthe{lemma}
\newaliascnt{corollary}{theorem}
\newtheorem{corollary}[corollary]{Corollary}
\aliascntresetthe{corollary}
\newaliascnt{criterion}{theorem}
\newtheorem{criterion}[criterion]{Criterion}
\aliascntresetthe{criterion}
\theoremstyle{definition}
\newaliascnt{definition}{theorem}
\newtheorem{definition}[definition]{Definition}
\aliascntresetthe{definition}
\newaliascnt{convention}{theorem}
\newtheorem{convention}[convention]{Convention}
\aliascntresetthe{convention}
\theoremstyle{remark}
\newaliascnt{remark}{theorem}
\newtheorem{remark}[remark]{Remark}
\aliascntresetthe{remark}
\crefname{proposition}{proposition}{propositions}
\crefname{lemma}{lemma}{lemmas}
\crefname{corollary}{corollary}{corollaries}
\crefname{criterion}{criterion}{criteria}
\crefname{definition}{definition}{definitions}
\crefname{convention}{convention}{conventions}
\crefname{remark}{remark}{remarks}
\newtheorem*{theoremA}{Theorem A (Affine exceptional-cluster Koszul resolution)}
\newtheorem*{theoremB}{Theorem B (Common finite cores without lattice divisibility)}
\newtheorem*{theoremC}{Theorem C (Affine fibre theorem)}

\newcommand{\kk}{\Bbbk}
\newcommand{\HH}{\mathcal H}
\newcommand{\CC}{\mathcal C}
\newcommand{\WW}{\mathcal W}
\newcommand{\VV}{\mathcal V}
\newcommand{\TT}{\mathcal T}
\newcommand{\FF}{\mathcal F}
\newcommand{\Mdual}{M_c}
\newcommand{\Adual}{A_c}
\newcommand{\Pcox}{P_c}
\newcommand{\re}{\mathrm{re}}
\newcommand{\Ext}{\operatorname{Ext}}
\newcommand{\Hom}{\operatorname{Hom}}
\newcommand{\End}{\operatorname{End}}
\newcommand{\Tor}{\operatorname{Tor}}
\newcommand{\pd}{\operatorname{pd}}
\newcommand{\rad}{\operatorname{rad}}
\newcommand{\relint}{\operatorname{relint}}
\newcommand{\Cone}{\operatorname{Cone}}
\newcommand{\Supp}{\operatorname{Supp}}
\newcommand{\wide}{\operatorname{wide}}
\newcommand{\cox}{\operatorname{cox}}
\newcommand{\Fr}{\operatorname{Fr}}
\newcommand{\add}{\operatorname{add}}
\newcommand{\ind}{\operatorname{ind}}
\newcommand{\Exc}{\operatorname{Exc}}
\newcommand{\eWide}{\operatorname{eWide}}
\newcommand{\rank}{\operatorname{rank}}
\newcommand{\tauc}{\tau_c}
\newcommand{\tauD}{\tau_D}

\newcommand{\DeltaReal}{\Delta_c^{\re}(\Phi)}
\newcommand{\DeltaExc}{\Delta_c^{\mathrm{exc}}(\HH)}
\newcommand{\DivR}{\operatorname{Div}_{R}}

\title[Affine dual braid monoids and Koszul resolutions]{Affine Dual Braid Monoids:\\ Finite Cores, Exceptional Cluster Complexes, and Koszul Resolutions}
\author{Jindong Yan and Shenglin Zhu}
\address{School of Mathematical Sciences, Fudan University, Shanghai, China}
\date{August 2026}
\subjclass[2020]{Primary 20F55, 16S37; Secondary 13D02, 16G20, 20F36}
\keywords{affine Coxeter group, dual braid monoid, finite Coxeter component, rectified exceptional cluster complex, Koszul resolution, Garside completion}
\hypersetup{
  pdftitle={Affine Dual Braid Monoids: Finite Cores, Exceptional Cluster Complexes, and Koszul Resolutions},
  pdfauthor={Jindong Yan and Shenglin Zhu},
  pdfsubject={Koszul resolutions for crystallographic affine dual braid monoid algebras}
}

\begin{document}

\begin{abstract}
For every finite-rank crystallographic affine Coxeter system $(W,S)$ and Coxeter element $c$, we construct a minimal linear graded free resolution of the trivial module over $\kk[M([1,c]_T)]$ supported on a rectified exceptional cluster complex. Hence the affine dual braid monoid algebra is Koszul over every field $\kk$.

The exceptional complex is introduced to recover the principal-fibre topology missing from the direct Reading--Stella labelling. Half-orbit rectification replaces the transjective root labels by ordinary exceptional modules, so that a face $F$ determines an exceptional wide subcategory and the intrinsic weight
\[
  \omega(F)=\operatorname{cox}(\operatorname{wide}\langle F\rangle).
\]
The resulting principal fibres are induced subcomplexes and split canonically as joins of subcomplexes attached to connected Dynkin and affine blocks; these subcomplexes are contractible.

Affine non-lattice divisibility creates the genuinely nonprincipal case. The McCammond--Sulway completion shows that whenever no greatest interval right divisor exists, all maximal interval right divisors share a common nontrivial complete finite Coxeter component. In the associated exceptional-wide decompositions, this common Coxeter component is the Coxeter element of a Dynkin block, and the subcomplex attached to that block occurs as a common contractible join factor. Thus every nonidentity fibre is contractible, and the weighted-face complex is exact, minimal and linear. In particular, $\operatorname{Tor}^{A_c}_q(\kk,\kk)$ is indexed by $q$-vertex exceptional cluster faces in internal degree $q$, and $\operatorname{pd}_{A_c}\kk=|S|$.
\end{abstract}

\maketitle
\tableofcontents

\section{Introduction}

\subsection{The affine cluster--Koszul problem}
Let $(W,S)$ be a Coxeter system, let $T$ be its reflection set, and let $c$ be a Coxeter element. The absolute-order interval
\[
  P_c=[1,c]_T
\]
defines the dual braid monoid $M(P_c)$. In finite Coxeter type, Bessis proved that $P_c$ is a lattice and that $M(P_c)$ is a Garside monoid embedded in the corresponding Artin group \cite[Fact~2.3.1, Theorem~2.3.2 and Corollary~2.3.3]{Bessis2003}.

Josuat--Verg\`es and Nadeau gave this finite theory a homological form \cite{JVN2023}. Their resolution is supported on the positive cluster complex and is controlled, degree by degree, by divisibility fibres. If cluster faces are weighted by interval elements, then the homogeneous strand of the weighted-face complex is the augmented simplicial chain complex of
\[
  K_b=\{F:\omega(F)\preccurlyeq_R b\}.
\]
Contractibility of these fibres gives exactness, while the face grading gives linearity and minimality. In finite type, lattice divisibility reduces the relevant fibres to positive cluster balls. Josuat--Verg\`es and Nadeau therefore asked whether this cluster-complex mechanism extends to affine dual braid monoids \cite[Section~9.3]{JVN2023}.

In affine type the same question is best formulated at the level of fibres. The real Reading--Stella complex supplies the appropriate affine cluster geometry, but its natural labelling does not directly provide the exceptional-wide principal fibres required by the weighted resolution. At the same time, $P_c$ need not be a lattice, so a general monoid element may have several maximal interval right divisors and no greatest one. The two new structures of this paper enter at these different levels: a rectified exceptional cluster complex makes principal fibres intrinsic and decomposable, while common finite Coxeter cores replace the greatest-divisor argument when lattice divisibility fails.

The proof combines the crystallographic completion of McCammond--Sulway with the real affine cluster geometry of Reading--Stella and the exceptional-wide correspondence of Hanson--Reading, within the weighted resolution framework of Josuat--Verg\`es and Nadeau. The new ingredient is a fibre-level bridge between these structures: positive saturation produces common finite Coxeter components for maximal interval divisors, and rectification identifies them with Dynkin blocks that occur as contractible join factors.

\subsection{Main theorem}
Put
\[
  \Mdual=M(P_c),\qquad \Adual=\kk[\Mdual].
\]
Fix a hereditary realization $\HH$ of the Coxeter datum $(W,S,c)$, chosen componentwise so that the ordered simple objects realize the prescribed Coxeter element $c$, as in \cref{sec:coxeter-hereditary}. This realization is auxiliary and is used only to construct the exceptional complex and its intrinsic face weights. Let $\DeltaExc$ denote the rectified exceptional $c$-cluster complex of \cref{def:rectified}. For a simplicial complex $K$, let $\Fr_q(K)$ denote the set of its $q$-vertex faces.

\begin{theoremA}
Let $(W,S)$ be a finite direct product of irreducible crystallographic affine Coxeter systems. For every Coxeter element $c$ and every field $\kk$, the algebra $\Adual$ has an explicit minimal linear graded free resolution by free left $\Adual$-modules
\[
  P_q=\Adual\otimes_{\kk}\kk\Fr_q(\DeltaExc).
\]
Consequently $\Adual$ is Koszul,
\[
  \Tor_q^{\Adual}(\kk,\kk)\cong \kk\Fr_q(\DeltaExc)
\]
is concentrated in internal degree $q$, and
\[
  \pd_{\Adual}\kk=|S|.
\]
\end{theoremA}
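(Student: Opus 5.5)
The plan follows the weighted-face framework of Josuat--Verg\`es and Nadeau. First, form the chain complex
\[
  P_q=\Adual\otimes_{\kk}\kk\Fr_q(\DeltaExc),
\]
with differential
\[
  \partial(1\otimes F)=\sum_{v\in F}\pm\,\omega(F)\,\omega(F\setminus v)^{-1}\otimes (F\setminus v).
\]
Here $\omega(F)=\cox(\wide\langle F\rangle)$ is the intrinsic weight. Three things must be checked before exactness.
\begin{enumerate}
\item $\omega(F)$ lies in $P_c$ and has reflection length $|F|$, since $\wide\langle F\rangle$ is an exceptional wide subcategory of rank $|F|$.
\item $F'\subseteq F$ implies $\omega(F')\preccurlyeq_R\omega(F)$. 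This comes from the inclusion of wide subcategories together with the Hanson--Reading correspondence, which sends inclusion of exceptional wide subcategories to interval divisibility of Coxeter elements.
\item $\partial^2=0$. This is the usual sign check in the simplicial chain complex, because the coefficients telescope.
\end{enumerate}
The internal degree of $1\otimes F$ is then $|F|=q$. The complex is therefore linear, and every coefficient of $\partial$ is a positive-length element, so the complex is minimal after tensoring with $\kk$. This yields the $\Tor$ formula and the Koszul conclusion once exactness is proved. Since the maximal faces have $|S|$ vertices (the rank of $\HH$), we also get $\pd_{\Adual}\kk=|S|$.

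Exactness will be checked strand by strand. For a monoid element $b\in\Mdual$, the $b$-homogeneous strand of the complex is the augmented chain complex of the fibre
\[
  K_b=\{F:\omega(F)\preccurlyeq_R b\}.
\]
This is a subcomplex by property (2). It therefore suffices to show that $K_b$ is contractible for every $b\neq 1$; the case $b=1$ gives $\kk$ in degree $0$. I split into two cases.

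In the \emph{principal case}, $b$ has a greatest interval right divisor $d\in P_c$, and then $K_b=K_d$. I will show that $K_d$ is the induced subcomplex on the vertices whose exceptional module lies in $\wide(d)$. Decomposing $\wide(d)$ into connected Dynkin and affine blocks splits $K_d$ as a join of the block subcomplexes. Each block subcomplex is a cone or cluster ball: the Dynkin blocks are handled by the finite-type Josuat--Verg\`es--Nadeau result, and the affine blocks by the Reading--Stella real complex after half-orbit rectification. A nontrivial block exists because $d\neq1$, so the join is contractible.

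The \emph{nonprincipal case} is the main obstacle. Here $b$ has several maximal interval right divisors $d_1,\dots,d_m$, and
\[
  K_b=\bigcup_i K_{d_i}.
\]
I would invoke the McCammond--Sulway completion result stated earlier: all the $d_i$ share a common nontrivial complete finite Coxeter component $e$. Rectification identifies $e$ with the Coxeter element of a Dynkin block $\mathcal D$, and $\mathcal D$ appears in every exceptional-wide decomposition of $\wide(d_i)$. Consequently
\[
  K_{d_i}=K_{\mathcal D}*L_i
\]
with a common factor $K_{\mathcal D}$, and likewise every intersection
\[
  K_{d_{i_1}}\cap\dots\cap K_{d_{i_r}}=K_{\mathcal D}*(L_{i_1}\cap\dots\cap L_{i_r}).
\]
The union is then $K_{\mathcal D}*\bigl(\bigcup_i L_i\bigr)$. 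This is contractible because $K_{\mathcal D}$ is a nonempty contractible (finite-type ball) complex, and a join with a contractible complex is contractible regardless of the other factor. The delicate points are two: showing that the decomposition really is a join compatible across all $i$ (the complementary parts $L_i$ must live on vertices disjoint from $\mathcal D$ and be induced), and showing that completeness of $e$ guarantees that every face of $K_b$ lies in some $K_{d_i}$. I would attack both through the perpendicular-category description of $\wide\langle F\rangle$ inside $\HH$.
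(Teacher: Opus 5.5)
\textbf{Main gap: cancellativity of $\Mdual$.} Your architecture is the paper's: the weight $\omega(F)=\cox(\wide\langle F\rangle)$, reduction to fibres $K_b$, the block-join decomposition of principal fibres, and a common Dynkin join factor when there is no greatest divisor. But you assert without justification that the $b$-homogeneous strand is the augmented chain complex of $K_b$, and this is a genuine gap. In homological degree $q$ that strand has basis $\{a\otimes[F]:|F|=q,\ a\,\omega(F)=b\}$. It is the chain complex of $K_b$ only if each $F\in K_b$ admits a \emph{unique} $a\in\Mdual$ with $a\,\omega(F)=b$, i.e.\ only if $\Mdual$ is right-cancellative. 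Your coefficient $\omega(F)\omega(F\setminus v)^{-1}$ also silently places $\Mdual$ inside a group. In the paper, the common-core theorem is proved for the positive image $G_W^+$ and only transferred to $\Mdual$ through the embedding. In finite type cancellativity is free from Garside theory. In affine type $P_c$ is not a lattice, and cancellativity is a real theorem: the paper proves $\Mdual\hookrightarrow G_W$ (\cref{thm:interval-embedding}) using positive-cone saturation in the McCammond--Sulway completion and the common-right-divisor criterion (\cref{crit:injectivity}). The finite core is exactly what supplies the criterion's hypothesis. So the finite-core theorem must be used twice, once for the embedding and once for the fibre, and your proposal omits the first use.

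\textbf{Other steps asserted rather than proved.}
\begin{enumerate}
\item Affine blocks are neither cones nor balls; they are infinite real Reading--Stella complexes. Their contractibility in the weak topology is not a citable fact. The paper proves it in \cref{app:topology}: it identifies the real fan support with the complement of the relative interior of the imaginary cone, then compares topologies via a coefficient-star good cover and the nerve theorem. You also need null-root restriction, so that local and ambient defect signs agree on the block.
\item For a Dynkin block, $K_\VV$ is cut out by the mixed rule $\Hom(N,I)=0$ across defect sides. It is therefore not literally the rigid-module complex of $\VV$. The paper reaches a positive cluster complex only after a split HRS tilt by ambient defect, and that complex is for the tilted heart's Coxeter element $c_\dagger$, not necessarily $\cox(\VV)$.
\item Your ``delicate point'' that every face of $K_b$ lies in some $K_{d_i}$ has nothing to do with completeness of $e$. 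It follows from $\preccurlyeq_R=\le_T$ on $P_c$ together with the finite height of $P_c$.
\item Theorem~A covers finite products, while Theorem~B is stated for irreducible $W$. The reducible case therefore still needs an argument, such as tensoring the component resolutions or writing product fibres as joins.
\end{enumerate}
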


The differential is given in \cref{thm:minimal-resolution}. By \cref{thm:rectified}, the same basis may equivalently be labelled by real Reading--Stella faces. For finite direct products, the construction is componentwise; equivalently, the resulting resolution is the total tensor product of the component resolutions.

\subsection{Exceptional cluster complexes and principal fibres}
Reading and Stella's real affine $c$-cluster complex $\DeltaReal$ provides the root-theoretic model for affine cluster compatibility \cite{ReadingStella2020}. Its natural root labelling, however, does not place all vertices in a single ordinary exceptional-module section of a hereditary category. Consequently it does not directly furnish the exceptional wide subcategories needed for an intrinsic face weight.

A half-orbit rectification removes this mismatch. For the fixed tame hereditary realization $\HH$, the real Reading--Stella complex is reconstructed as a flag complex $\DeltaExc$ on ordinary exceptional modules,
\[
  \DeltaExc\xrightarrow{\sim}\DeltaReal.
\]
Since every rectified face admits an exceptional ordering, a face $F$ determines an exceptional wide subcategory and hence an intrinsic weight
\[
  \WW_F=\wide\langle X:X\in F\rangle,
  \qquad
  \omega(F)=\cox(\WW_F)\in P_c.
\]

For $w\in P_c$, let $\WW_w=\cox^{-1}(w)$. The corresponding principal fibre is then the induced subcomplex
\[
  K_w=\DeltaExc[\ind\Exc(\WW_w)].
\]
If
\[
  \WW_w=\VV_1\oplus\cdots\oplus\VV_s
\]
is the decomposition into connected exact blocks, orthogonality of distinct blocks gives the ambient join decomposition
\[
  K_w=K_{\VV_1}*\cdots*K_{\VV_s},
  \qquad
  K_{\VV_i}=\DeltaExc[\ind\Exc(\VV_i)].
\]
Each connected block is either Dynkin or affine. In the Dynkin case, a split HRS tilt identifies $K_{\VV_i}$ with a positive finite cluster ball; in the affine case, the inherited rectification identifies it with the corresponding real affine Reading--Stella complex. The latter is contractible by Appendix~\ref{app:topology}. Hence every nontrivial principal fibre is contractible.

Thus rectification turns principal fibres into induced categorical subcomplexes whose topology is governed by connected-block decompositions. In particular, the significance of the exceptional cluster complex goes beyond the simplicial relabelling itself: it is the model in which the principal-fibre structure becomes intrinsic.

\subsection{Non-lattice divisibility and finite cores}
Principal fibres do not exhaust the affine problem. For $b\in\Mdual$,
\[
  K_b=\bigcup_{w\in\operatorname{Max}_{R,c}(b)}K_w,
\]
where $\operatorname{Max}_{R,c}(b)$ denotes the maximal interval right divisors of $b$. If a greatest interval right divisor $d$ exists, then $K_b=K_d$. The genuinely nonprincipal case occurs when no such $d$ exists, and this is precisely where the failure of lattice divisibility enters.

In rank at least three, the McCammond--Sulway crystallographic completion \cite{McCammondSulway2017} embeds the affine interval group into a larger Garside setting where completed gcds exist. We prove that positivity is saturated along the original interval group, so the completed right gcd with $c$ detects exactly the original interval right divisors. When there is no greatest original divisor, the completed structure forces a finite canonical Coxeter component common to all maximal ones. In affine rank two the original interval is already a height-two lattice, so this nonprincipal case does not occur.

\begin{theoremB}
Let $W$ be irreducible crystallographic affine. If $1\ne b\in M(P_c)$ has no greatest $P_c$-right divisor, then all maximal $P_c$-right divisors of $b$ share a common nontrivial irreducible complete finite Coxeter component.
\end{theoremB}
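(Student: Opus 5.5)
The approach is to pass to the McCammond--Sulway crystallographic completion and read off the common component from completed gcds. Affine rank two is handled first and separately. There $P_c$ is a height-two lattice: every proper nontrivial element is a reflection, and any two reflections below $c$ have join $c$. Hence every $b$ has a greatest $P_c$-right divisor and the hypothesis is vacuous. So assume rank at least three. Let $\widetilde P_c\supseteq P_c$ be the completed interval, a lattice whose monoid $\widetilde M$ is Garside, with $M(P_c)\hookrightarrow\widetilde M$ as in \cite{McCammondSulway2017}.

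The first step is the positivity saturation announced in the introduction. If $b\in M(P_c)$ and $u\in\widetilde P_c$ right-divides $b$ in $\widetilde M$, then the canonical factorization of $u$ should have its "horizontal" (factorization-reflection) part controlled. Concretely, the completed right gcd $g=b\wedge_R c$ in $\widetilde M$ satisfies:
\begin{itemize}
\item every $w\in P_c$ with $w\preccurlyeq_R b$ satisfies $w\preccurlyeq_R g$;
\item $g\in P_c$ if and only if $g$ is the greatest $P_c$-right divisor of $b$.
\end{itemize}
I would prove this by comparing left complements: $b=aw$ in $M(P_c)$ lifts to $\widetilde M$, and conversely a completed divisor lying in $P_c$ gives an honest factorization in $M(P_c)$. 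The comparison uses that the completed group is an amalgam over the interval group, so positive words in $T$ stay positive.

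Now suppose no greatest divisor exists. Then $g\notin P_c$, so $g$ lies in $\widetilde P_c\setminus P_c$. By McCammond--Sulway, such elements are exactly those whose Coxeter-type factorization involves the added translations or factorization reflections. Such an element decomposes as $g=g_{\mathrm{hor}}\cdot g_{\mathrm{fin}}$, where $g_{\mathrm{fin}}$ is a product of commuting irreducible finite Coxeter elements of the finite "vertical" parabolics. Each maximal $w\in\operatorname{Max}_{R,c}(b)$ is a maximal element of $P_c\cap[1,g]$. Maximal elements of this intersection should all be obtained by deleting one horizontal direction from $g$ while keeping the entire vertical part. Equivalently, $[1,g]\cap P_c$ is a union of lower intervals whose tops differ only in the horizontal factor. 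Then a suitable irreducible factor $h$ of $g_{\mathrm{fin}}$ is a complete Coxeter component of every such $w$. Here "complete" means that the parabolic closure of $w$ contains the full finite parabolic of $h$ as a direct factor.

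Nontriviality is argued as follows. If $g_{\mathrm{fin}}=1$, the maximal divisors would all be reflections in the horizontal/translation part. One then checks, using that $b\ne1$ is divisible by them, that their join in $P_c$ already lies below $b$, contradicting non-uniqueness. This uses the explicit description of horizontal elements in the affine interval as a product of two-crossing type factors, which forms a distributive lattice.

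The main obstacle is the middle step: showing that all maximal elements of $P_c\cap[1,g]$ contain the same finite irreducible component, rather than merely each containing some finite component. I would first attack it by working type by type in the McCammond--Sulway description of the horizontal root system, which is a product of at most three type-$A$ factors. There I would check that the failure of the lattice property arises only from the horizontal factors. Any vertical finite factor surviving in $g$ is then inherited by all maximal divisors, since it commutes with and is orthogonal to the horizontal part.
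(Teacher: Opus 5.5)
Your rank-two reduction and your overall strategy agree with the paper. That strategy is to take the completed right gcd $g=\gcd_R(b,c)$ in $G_C^+$ and detect $P_c$-right divisors of $b$ as $P_c$-elements below $g$. But the step that actually proves Theorem~B is missing, and the decomposition you propose for it is misidentified.

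\textbf{The common core is not vertical.} When $g\notin P_c$ one has $g\in P_C\setminus P_W=P_F\setminus P_D$, and $g$ admits no reduced completed factorization containing a vertical reflection. By Hanson--Reading's Proposition~6.6, a completed factorization of $c$ with a vertical reflection contains no factored translation, so such a factorization would put $g$ in $P_W$. So there is no ``vertical finite part'' $g_{\mathrm{fin}}$. Your closing argument, that a vertical finite factor commutes with and is orthogonal to the horizontal part, therefore has nothing to apply to. Both the varying part and the common part of the maximal divisors are horizontal, living in different horizontal components.

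\textbf{The missing rigidity mechanism.} It is coordinatewise in $P_F\cong P_1\times\cdots\times P_m$:
\begin{enumerate}
\item Two factored translations from one component are incompatible, so $\nu_i(g_i)\in\{0,1\}$.
\item Since $G_D=\nu^{-1}(\mathbb Z(1,\ldots,1))$ and $P_F\cap G_D=P_D$, both $I(g)=\{i:\nu_i(g_i)=1\}$ and $Z(g)=\{i:\nu_i(g_i)=0\}$ are nonempty.
\item Every $P_c$-element below $g$ lies in $P_D$ with zero winding.
\item Fix $j\in Z(g)$ and a maximal divisor $u$, and replace $u_j$ by $g_j$. The result still has zero winding, so it lies in $P_D$ by the factor-word criterion. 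It still lies below $g$, so it right divides $b$. Maximality then forces $u_j=g_j$.
\item The common core is an irreducible factor of the finite group $W(g_j)$. To call it a complete component you still need Dyer's canonical system of $W(u)$ to split as $\prod_iW(u_i)$ for zero-winding $u\in P_D$.
\end{enumerate}
None of this is in your proposal; you flag it as ``the main obstacle'' and aim a type-by-type check at the wrong (vertical) part.

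\textbf{Nontriviality.} Your argument here does not work. Nothing forces the maximal divisors to be reflections when $g_{\mathrm{fin}}=1$. Producing a join below $b$ presupposes exactly the lattice property whose failure is at issue, and no distributive-lattice description of horizontal elements is available to substitute. The paper uses a diagonal lifting instead:
\begin{enumerate}
\item Write $b=ag$ in $G_C^+$ and take a minimal alternating $G_W^+/G_F^+$ expression for $a$.
\item The amalgam normal form forces the last syllable $f$ to lie in $G_F^+$, with $x=fg\in G_D^+$ and $\gcd_R(x,c)=g$.
\item The common winding of $x$ is at least $1$ because $I(g)\neq\varnothing$. Hence $x_j\neq1$ for $j\in Z(g)$.
\item A right atom of $x_j$ right divides both $x$ and $c$, hence right divides $g$. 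Thus $g_j\neq1$.
\end{enumerate}

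\textbf{Two inputs you treat as routine.} First, exact detection needs $G_C^+\cap G_W=G_W^+$. The paper obtains this from three ingredients: greedy normal forms for $G_F\subset G_C$, diagonal positivity by cutting at factored translations, and reduced positive amalgam words. It does not follow from ``positive words stay positive.'' Second, McCammond--Sulway embed the group $G_W$ in $G_C$, not the monoid $M(P_c)$ in $G_W$. The paper proves the statement for the positive image $G_W^+$ and deduces injectivity of $M(P_c)\to G_W$ from Theorem~B itself, so you may not assume it as input.
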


Here $e$ is called a complete finite Coxeter component of $w\in P_c$ if $W(e)$ is a finite irreducible connected component of Dyer's canonical Coxeter system of $W(w)$ and $e$ is its Coxeter element in the component factorization of $w$. We refer informally to a common such component as a \emph{finite core}. The same divisor analysis also proves that the natural map $M(P_c)\to G_W$ is injective, hence that the interval monoid is cancellative.

\subsection{From finite cores to contractible fibres}
The same connected-block decomposition also governs the Coxeter side. For
\[
  \WW_w=\VV_1\oplus\cdots\oplus\VV_s,
  \qquad e_i=\cox(\VV_i),
\]
let
\[
  W(w)=\langle t\in T:t\le_T w\rangle.
\]
Then
\[
  W(w)=W(e_1)\times\cdots\times W(e_s),
\]
and $e_i$ is an irreducible complete finite Coxeter component of $w$ exactly when $\VV_i$ is a Dynkin block. If $e=e_i$ is such a component and $u$ is the product of the remaining component Coxeter elements, then
\[
  w=eu=ue,
  \qquad
  K_w=K_e*K_u
\]
inside the ambient exceptional cluster complex. The subcomplex $K_e$ is the positive finite cluster ball associated with the Dynkin block and is therefore nonempty and contractible.

Now suppose that $b$ has no greatest interval right divisor. By Theorem~B, its maximal interval right divisors share a complete finite Coxeter component $e$. For each maximal divisor $w$, the preceding decomposition gives $K_w=K_e*K_{u_w}$, and, since all these joins occur inside the same ambient complex,
\[
  K_b
  =K_e*\left(\bigcup_w K_{u_w}\right).
\]
Hence $K_b$ is contractible. Together with the principal case, this yields the fibre theorem.

\begin{theoremC}
For every $1\ne b\in M(P_c)$, the fibre
\[
  K_b=\{F\in\DeltaExc:\omega(F)\preccurlyeq_R b\}
\]
is contractible. More precisely, either $b$ has a greatest interval right divisor $d$ and $K_b=K_d$, or there is a common nontrivial complete finite Coxeter component $e$ such that
\[
  K_b=K_e*L_b
\]
for a simplicial subcomplex $L_b$.
\end{theoremC}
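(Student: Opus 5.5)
The proof of Theorem~C splits according to whether $b$ has a greatest $P_c$-right divisor. The starting point is the fibre decomposition
\[
  K_b=\bigcup_{w\in\operatorname{Max}_{R,c}(b)}K_w .
\]
To obtain it, I would first check that $\omega(F)\preccurlyeq_R b$ holds iff $\omega(F)\le_T w$ for some maximal interval right divisor $w$ of $b$. Then I would check that $\omega(F)\le_T w$ holds iff every vertex of $F$ lies in $\ind\Exc(\WW_w)$. This second equivalence is the exceptional-wide correspondence: $\omega(F)\le_T w$ iff $\WW_F\subseteq\WW_w$. Since $\DeltaExc$ is a flag complex, this identifies $K_w$ with the induced subcomplex on $\ind\Exc(\WW_w)$. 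As $b\ne 1$, some reflection right-divides $b$, so the set of maximal divisors is nonempty and consists of nonidentity elements.

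\textbf{Principal case.} Suppose $b$ has a greatest interval right divisor $d$. Then the union collapses, and $K_b=K_d$ with $d\ne 1$. I would decompose $\WW_d=\VV_1\oplus\cdots\oplus\VV_s$ into connected blocks. Orthogonality of distinct blocks makes compatibility across blocks automatic in the flag complex, which gives $K_d=K_{\VV_1}*\cdots*K_{\VV_s}$. Each factor is nonempty and contractible: a Dynkin block is a positive finite cluster ball via the split HRS tilt, and an affine block is a real Reading--Stella complex, contractible by Appendix~\ref{app:topology}. A join of nonempty spaces with at least one contractible factor is contractible, so $K_b$ is contractible.

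\textbf{Nonprincipal case.} Suppose no greatest divisor exists. By Theorem~B, all maximal divisors $w$ share a nontrivial complete finite Coxeter component $e$, and we write $w=eu_w=u_we$. The key step is to show that $e$ is the Coxeter element of a Dynkin block $\VV$ of $\WW_w$ that does not depend on $w$. The argument goes as follows:
\begin{itemize}[label={}]
\item the product decomposition $W(w)=\prod_i W(e_i)$ matches Dyer's canonical components with the blocks;
\item hence $e=\cox(\VV)$ for some Dynkin block $\VV$ of $\WW_w$;
\item since $\cox$ is a bijection, $\VV=\cox^{-1}(e)=\WW_e$ is the same for every $w$.
\end{itemize}
Then $K_w=K_e*K_{u_w}$, where $K_{u_w}$ is the induced subcomplex on the complementary blocks, all of which are orthogonal to $\WW_e$. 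The joins are taken inside one ambient complex, and every vertex of $K_{u_w}$ is perpendicular to $\WW_e$. Therefore the union distributes over the join:
\[
  K_b=K_e*L_b,\qquad L_b=\bigcup_w K_{u_w}.
\]
This distributivity requires that a face of $K_b$ meets $\ind\Exc(\WW_e)$ and the complement in pieces that land in the correct factors. That holds because each vertex of $K_w$ lies in exactly one block. Finally, $K_e$ is a nonempty contractible finite cluster ball, so $K_b$ is contractible even when $L_b$ is arbitrary, including empty.

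\textbf{Main obstacle.} I expect the hardest part to be showing that the complete finite Coxeter component supplied by Theorem~B corresponds exactly to a whole Dynkin block. In particular, one must rule out that it corresponds to only part of a connected affine block. I would first try to prove that a connected block $\VV_i$ is Dynkin iff $W(e_i)$ is finite, using the classification of connected exceptional wide subcategories of tame hereditary categories. The second half, that $W(e_i)$ is an irreducible Dyer component, would follow from the connectedness of $\VV_i$, via irreducibility of the associated reflection subgroup.
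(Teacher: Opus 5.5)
Your proposal is correct and follows essentially the same route as the paper. You write $K_b$ as the union of principal fibres over the maximal interval right divisors, and you describe each principal fibre as an induced subcomplex that splits as a join over connected blocks, with Dynkin blocks handled by the split HRS tilt and affine blocks by the appendix. In the nonprincipal case you combine Theorem~B with the identification of Dyer's canonical components with the categorical blocks, obtain $K_w=K_e*K_{u_w}$ with $\WW_e$ independent of $w$, and distribute the union over the join.

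The step you flag as the main obstacle is exactly what the paper settles in \cref{prop:component-categorical} and \cref{cor:finite-component-join}. It uses essentially the argument you sketch: $W(e_i)$ is finite exactly when the block is Dynkin, via the local Cartan form, and connectedness of each block makes each $W(e_i)$ a single irreducible Dyer component, so a finite component cannot be a proper piece of an affine block.
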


The fine homogeneous degree-$b$ strand of the weighted-face complex is the augmented simplicial chain complex of $K_b$. By Theorem~C these strands are exact in every nonzero degree; the grading gives linearity, and the positive-degree differential coefficients give minimality. Hence the weighted-face complex is the minimal linear resolution of Theorem~A.

The paper is organized as follows. Section~\ref{sec:coxeter-hereditary} fixes the Coxeter and hereditary conventions. Sections~\ref{sec:completion}--\ref{sec:common-embedding} establish the finite-core divisor theory, Section~\ref{sec:rectification} constructs the rectified exceptional cluster complex, and Sections~\ref{sec:weights}--\ref{sec:fibres} establish the fibre theory. Section~\ref{sec:weighted} constructs the minimal linear resolution, and Appendix~\ref{app:topology} proves the real affine cluster contractibility used for affine principal factors.

\section{Coxeter intervals and hereditary realizations}\label{sec:coxeter-hereditary}

\subsection{Absolute order and interval monoids}
Let $(W,S)$ have finite rank $n=|S|$ and reflection set
\[
  T=\{wsw^{-1}:w\in W,\ s\in S\}.
\]
The reflection length is $\ell_T(w)=\min\{r:w=t_1\cdots t_r,\ t_i\in T\}$. The absolute order is
\begin{equation}\label{eq:absolute-order}
  u\le_Tv\iff \ell_T(v)=\ell_T(u)+\ell_T(u^{-1}v),
\end{equation}
and equivalently
\begin{equation}\label{eq:absolute-right}
  u\le_Tv\iff \ell_T(v)=\ell_T(vu^{-1})+\ell_T(u).
\end{equation}
Fix $c$ and put $\Pcox=[1,c]_T$. The interval monoid $\Mdual=M(\Pcox)$ is generated by symbols $[w]$, $w\in\Pcox\setminus\{1\}$, with relations $[u][v]=[uv]$ whenever $u,v,uv\in\Pcox$ and reflection length is additive. We suppress brackets. The grading is $\deg w=\ell_T(w)$. Right divisibility is
\[
  u\preccurlyeq_Rv\iff v=au\quad\text{for some }a.
\]
For a balanced interval $P=[1,\Delta]$, write
\[
  G(P)^+=\langle P\rangle^+\subseteq G(P)
\]
for the positive submonoid of its interval group. We do not identify $G(P)^+$ with $M(P)$ until the natural map $M(P)\to G(P)$ is shown to be injective.

\begin{lemma}[Interval degree]\label{lem:interval-degree}
The homogeneous interval presentation induces degree homomorphisms
\[
  \deg:M(P)\longrightarrow\mathbb N,
  \qquad
  \deg:G(P)\longrightarrow\mathbb Z,
\]
whose restriction to $P$ is reflection length. Moreover, $M(P)$ is generated by its degree-one interval elements, namely the reflections in $P$.
\end{lemma}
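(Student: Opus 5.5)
The argument runs through the universal property of the presentations. Define the degree on generators by $\deg[w]=\ell_T(w)$ for $w\in P\setminus\{1\}$. To obtain homomorphisms to $\mathbb N$ and to $\mathbb Z$, I only need each defining relation to be degree-homogeneous. A relation $[u][v]=[uv]$ is imposed only when $u,v,uv\in P$ and reflection length is additive, that is, $\ell_T(uv)=\ell_T(u)+\ell_T(v)$. Its two sides therefore have the same degree. Since $\mathbb N$ is a monoid, $\deg$ extends uniquely to a monoid homomorphism $M(P)\to\mathbb N$. Since $\mathbb Z$ is a group, the same assignment extends uniquely to a group homomorphism $G(P)\to\mathbb Z$, because $G(P)$ is presented by the same generators and relations viewed as group relations. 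By construction, the restriction to $P$ (through the symbols $[w]$) is $\ell_T$. The identity has degree $0=\ell_T(1)$, so it is consistent to omit $[1]$ or to set it equal to the unit.

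For generation, I argue by induction on $\ell_T(w)$ that every generator $[w]$ is a product of reflection generators $[t]$ with $t\in P\cap T$. Let $r=\ell_T(w)\ge2$ and choose a reduced $T$-factorization $w=t_1\cdots t_r$. Put $u=t_1$ and $v=t_2\cdots t_r$. Then $\ell_T(u)=1$ and $\ell_T(v)\le r-1$. Subadditivity together with $\ell_T(w)=r$ forces $\ell_T(v)=r-1$, so $\ell_T(w)=\ell_T(u)+\ell_T(v)$.

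To use the relation $[u][v]=[w]$, I must also check that $u,v\in P$. Here I use that $P=[1,\Delta]$ is an interval in absolute order and that $\le_T$ is transitive. Transitivity follows from the triangle inequality for $\ell_T$ together with \eqref{eq:absolute-order}. The defining identity shows directly that $u\le_T w$. Additivity in the form \eqref{eq:absolute-right} shows that $v\le_T w$. Since $w\le_T\Delta$, transitivity gives $u,v\in[1,\Delta]=P$.

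So the relation $[w]=[u][v]$ holds in $M(P)$, and induction applies to $[v]$. The degree-one elements of $P$ are exactly the reflections $t\le_T\Delta$, so $M(P)$ is generated by them. The same factorization also holds in $G(P)$, so $G(P)$ and $G(P)^+$ are generated by reflections in $P$ as well.

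The only step needing care is the membership $u,v\in P$. It relies on the downward closure of $P$ and on the transitivity of $\le_T$, both standard consequences of the subadditivity of $\ell_T$ and of the conjugation invariance of $T$ (used implicitly in the equivalence of \eqref{eq:absolute-order} and \eqref{eq:absolute-right}). Everything else is formal.
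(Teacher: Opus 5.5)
Your proof is correct and follows the paper's argument. Homogeneity of the defining relations gives both degree maps by the universal properties, and a reduced reflection factorization of $w$ becomes a product of interval relations once the partial products are shown to lie in $P$. The only cosmetic difference is that you peel off $t_1$ and induct on the suffix $t_2\cdots t_r$, checking membership in $P$ explicitly via transitivity of $\le_T$, whereas the paper uses the chain of prefixes $t_1\cdots t_j$ directly.
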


\begin{proof}
Every defining interval relation $uv=w$ is reflection-length additive, so the presentation is homogeneous and the degree extends to the universal monoid and universal group. If $w\in P$ and $w=t_1\cdots t_r$ is a reduced reflection factorization, each prefix $t_1\cdots t_j$ lies below $w$ in absolute order and hence belongs to $P$. The successive products are therefore defining interval relations, so the monoid generator $w$ equals the product of the degree-one generators $t_1\cdots t_r$.
\end{proof}

\begin{convention}[Koszulity]\label{conv:koszul}
For a connected $\mathbb N$-graded algebra $A$ with $A_0=\kk$, not assumed locally finite, we call $A$ \emph{Koszul} when the augmentation module $\kk$ admits a linear graded free resolution. All free resolutions below are resolutions of left modules unless explicitly stated otherwise.
\end{convention}

\begin{convention}[Cancellation]\label{conv:cancellation}
Before the interval-monoid embedding is proved, cancellation is used only in groups or in positive monoids already known to be Garside or quasi-Garside.
\end{convention}

\subsection{Hereditary realization}
From \cref{sec:rectification} through \cref{sec:complete-components}, initially assume $(W,S)$ irreducible, crystallographic and affine. Fix a reduced Coxeter word $c=s_1\cdots s_n$.

\begin{lemma}[Compatible hereditary realization]\label{lem:hereditary-realization}
There exists a basic connected tame hereditary module category $\HH=\operatorname{mod}\Lambda_c$ realizing the prescribed crystallographic affine datum, with simple objects $S_1,\ldots,S_n$, such that the ordering $(S_1,\ldots,S_n)$ is exceptional and determines the Coxeter element $c=s_1\cdots s_n$, and the Euler form is the form denoted $E_{c^{-1}}$ in the Hanson--Reading convention.
\end{lemma}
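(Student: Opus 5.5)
The plan is to build $\HH$ as the module category of a path algebra of a quiver, and then do a standard computation with simple modules. Start from the Coxeter system $(W,S)$, irreducible, crystallographic and affine, with Dynkin diagram $\Gamma$ and a symmetrizable generalized Cartan matrix $C$. If $\Gamma$ is simply laced, take $\kk$ algebraically closed (or any field) and form the quiver $Q_c$ on vertices $1,\dots,n$. Put one arrow between $i$ and $j$ for each edge of $\Gamma$, oriented $j\to i$ exactly when $i<j$ in the word $c=s_1\cdots s_n$. This convention may need to be reversed to match the paper; I fix it by the computation below. If $\Gamma$ is not simply laced ($\tilde B_n,\tilde C_n,\tilde F_4,\tilde G_2$ and the twisted forms), replace the path algebra by a tensor algebra of a $\kk$-species (Dlab--Ringel), orienting bimodules by the same rule. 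The species $(F_i, {}_iM_j)$ is chosen so that $\dim_{F_i}{}_iM_j\cdot\dim_{F_j}{}_iM_j$ equals the product of the Cartan entries. Set $\Lambda_c=T(F,M)$ and $\HH=\operatorname{mod}\Lambda_c$. Because $Q_c$ (or the valued quiver) has no oriented cycles and $\Gamma$ is a connected affine diagram, $\Lambda_c$ is a basic, connected, finite-dimensional hereditary algebra. By Dlab--Ringel it is tame, since its symmetrized Euler form is positive semidefinite with one-dimensional radical.

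Next check exceptionality and the Euler form. The simples $S_i$ have $\End(S_i)=F_i$, a division ring, and $\Ext^1(S_i,S_i)=0$ since there are no loops. So each $S_i$ is exceptional. For $i<j$ the orientation makes all arrows go from $j$ to $i$, so $\Hom(S_j,S_i)=0$ and $\Ext^1(S_j,S_i)=0$. The only nonzero extensions then go from $S_i$ to $S_j$ with $i<j$, and this is exactly the condition for $(S_1,\dots,S_n)$ to be an exceptional sequence in the paper's ordering convention. The Euler form $\langle \underline{\dim} X,\underline{\dim} Y\rangle=\dim\Hom(X,Y)-\dim\Ext^1(X,Y)$ evaluated on simples gives a matrix $E$ with diagonal $\dim F_i$. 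It has entries $-\dim {}_iM_j$ on one side of the diagonal and $0$ on the other. This is precisely a unipotent-up-to-symmetrizer triangular form whose symmetrization is the symmetrized Cartan matrix.

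Finally, identify the Coxeter element. The Coxeter transformation $\Phi=-E^{-1}E^{T}$ (in the appropriate convention) equals the product of the simple reflections $s_i(x)=x-\frac{2(x,e_i)}{(e_i,e_i)}e_i$ taken in the order determined by the triangularity. This is the classical fact that the Coxeter transformation of a hereditary algebra is $s_1\cdots s_n$ or its inverse, depending on orientation. I then compare with the Hanson--Reading convention: they define $E_{c}$ as the nonsymmetric bilinear form attached to $c$, with $E_c+E_c^{T}$ the symmetrized Cartan form and triangularity prescribed by the position of $s_i$ in $c$. The Euler form computed above matches $E_{c^{-1}}$ exactly when the arrow orientation is chosen as above. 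If the indices come out reversed, I flip all arrows, which replaces $c$ by $c^{-1}$ on the form side while keeping the simples ordered as $S_1,\dots,S_n$.

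The main obstacle is this last bookkeeping step: aligning the orientation with the exceptional-sequence convention, the Coxeter transformation convention, and Hanson--Reading's $E_{c^{-1}}$ simultaneously. These three sign and transpose conventions must agree for a single choice of quiver. I would settle it by checking rank two ($\tilde A_1$, the Kronecker quiver with $c=s_1s_2$) directly and then arguing that everything is determined edge by edge. The species construction in the non-simply-laced cases is routine once the valuation is matched to the Cartan matrix.
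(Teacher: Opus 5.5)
Your route is essentially the paper's. You realize the datum by a quiver or Dlab--Ringel species oriented by the positions of the letters in $c=s_1\cdots s_n$, check $\Ext^1(S_j,S_i)=0$ for $j>i$, and match the Coxeter element and Euler form with the Hanson--Reading normalization; the paper simply cites \cite[Section~2.2]{HansonReading2025} for that last step.

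The bookkeeping you flag as the main obstacle needs correcting, though. Under the usual covariant convention an arrow $a\to b$ gives $\Ext^1(S_a,S_b)\neq0$. So your orientation $j\to i$ for $i<j$ makes $(S_n,\ldots,S_1)$ exceptional, not $(S_1,\ldots,S_n)$. You need arrows $i\to j$ for $i<j$, which makes $S_n$ simple projective, in accordance with $\dim P_i=s_n\cdots s_{i+1}(\alpha_i)$ in \eqref{eq:pi-iota}.

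Your fallback of flipping all arrows while keeping the order $S_1,\ldots,S_n$ is not available. After the flip, the exceptional ordering is $(S_n,\ldots,S_1)$, whose Coxeter element is $c^{-1}$. The orientation is forced by requiring $(S_1,\ldots,S_n)$ to be exceptional. Once it is fixed, $\cox(\HH)=t_{\alpha_1}\cdots t_{\alpha_n}=c$ is immediate from \eqref{eq:cox-wide}. The identification of the Euler form with $E_{c^{-1}}$ is then a property of Hanson--Reading's normalization for that forced orientation. It must be checked or cited, not tuned.

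Finally, matching only the product of the two valuation entries fixes the Coxeter graph but not the Cartan matrix. To realize the prescribed crystallographic datum rather than, say, a dual one, match the left and right dimensions of ${}_iM_j$ to the two Cartan entries separately.
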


\begin{proof}
The standard species construction for a symmetrizable Cartan datum, with the acyclic orientation induced by the order
\[
  1<\cdots<n
\]
in the prescribed expression $c=s_1\cdots s_n$, gives such a basic connected hereditary module category; see \cite[Section~2.2]{HansonReading2025}. For the corresponding simple objects $S_1,\ldots,S_n$, this orientation gives
\[
  \Ext^1(S_j,S_i)=0\qquad(j>i).
\]
Hence $(S_1,\ldots,S_n)$ is an exceptional ordering, and with the Hanson--Reading convention its Coxeter element is
\[
  s_1\cdots s_n=c.
\]
In affine type the realization is tame hereditary, and with this ordering its Euler form is the form $E_{c^{-1}}$ used in \cite[Sections~5 and 8]{HansonReading2025}.
\end{proof}

Fix one realization supplied by \cref{lem:hereditary-realization} and write
\begin{equation}\label{eq:fixed-realization}
  \HH=\operatorname{mod}\Lambda_c.
\end{equation}
For any hereditary realization under consideration, write $K$ for its ground field when linear dimensions are written explicitly. The symbol $\kk$ remains reserved for the coefficient field of $\Adual$.

Identify $K_0(\HH)$ with the affine root lattice. Write $\Pi=\{\alpha_1,\ldots,\alpha_n\}$ and $V=\mathbb R\otimes_{\mathbb Z}K_0(\HH)$. For an object $X$, write $\dim X=[X]$. If $\alpha$ is real, $t_\alpha$ is the associated reflection.

An indecomposable $X$ is exceptional if $\End(X)$ is a division algebra and $\Ext^1(X,X)=0$. An exceptional sequence $(X_1,\ldots,X_r)$ satisfies
\[
  \Hom(X_j,X_i)=0=\Ext^1(X_j,X_i)\qquad(j>i).
\]
A full subcategory $\WW\subseteq\HH$ is wide if it is exact abelian and extension-closed. Write $\wide\langle\mathscr S\rangle$ for the smallest wide subcategory containing $\mathscr S$. If $(X_1,\ldots,X_r)$ is complete in an exceptional wide subcategory $\WW$, define
\begin{equation}\label{eq:cox-wide}
  \cox(\WW)=t_{\dim X_1}\cdots t_{\dim X_r}.
\end{equation}

Let $D=\Hom_K(-,K)$, let $\tau$ be ordinary Auslander--Reiten translation, and let $\tauD$ denote the derived AR autoequivalence. The Serre functor of $D^b(\HH)$ is
\begin{equation}\label{eq:Serre}
  \mathbb S=\tauD[1].
\end{equation}
For a primitive idempotent $e_i$, put $P_i=\Lambda_ce_i$ and $I_i=D(e_i\Lambda_c)$. Then functorially in $N$,
\begin{equation}\label{eq:proj-inj-duality}
  D\Hom_{\HH}(N,I_i)\cong\Hom_{\HH}(P_i,N).
\end{equation}

\begin{theorem}[Hanson--Reading correspondences]\label{thm:HR}
The following hold \cite[Theorems~2.2, 2.4 and Propositions~4.1, 4.3]{HansonReading2025}.
\begin{enumerate}[label=\textup{(\alph*)}]
\item Complete exceptional sequences correspond bijectively to reduced reflection factorizations of $c$.
\item $\cox:\eWide(\HH)\xrightarrow{\sim}[1,c]_T$ is a rank-preserving poset isomorphism.
\item Every exceptional sequence is complete in its exceptional wide closure, and the rank of that wide closure equals the length of the sequence. In particular, an exceptional sequence is complete in $\HH$ if and only if its length is $\rank K_0(\HH)$.
\item Every exceptional wide subcategory is exact-equivalent to the module category of a finite-dimensional hereditary algebra.
\end{enumerate}
\end{theorem}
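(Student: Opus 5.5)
The statement is a collection of citations to \cite{HansonReading2025}. My plan is therefore to check that each of the four items follows from the cited results under the conventions fixed in \cref{lem:hereditary-realization}. The main point to confirm is that the simple objects $S_1,\ldots,S_n$, in exceptional order, give the Coxeter element $c$ with Euler form $E_{c^{-1}}$. Once that normalization is in place, the cited theorems apply verbatim to the fixed realization $\HH=\operatorname{mod}\Lambda_c$.

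\emph{Items (c) and (d).} I would treat these first, since (a) and (b) rest on them. For (d), take an exceptional wide subcategory $\WW=\wide\langle X_1,\ldots,X_r\rangle$. The perpendicular calculus gives a projective generator built from the relative projectives of the exceptional sequence. Its endomorphism algebra is finite-dimensional and hereditary, because extension-closedness forces $\Ext^2=0$. Hence $\WW$ is exact-equivalent to a module category, as in \cite[Theorem~2.2]{HansonReading2025}. For (c), induct on $r$ by passing to $X_r^{\perp}$ inside the wide closure. This shows the closure has rank exactly $r$ and that the sequence is complete there. The last clause of (c) then follows because $\HH$ has rank $\rank K_0(\HH)=n$.

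\emph{Item (a).} Send a complete exceptional sequence $(X_1,\ldots,X_n)$ to the factorization $t_{\dim X_1}\cdots t_{\dim X_n}$. The product equals $c$ because the braid group acts transitively on complete exceptional sequences. The mutations change the reflection factorization by Hurwitz moves, and these preserve the product. Starting from the simples, the product is $s_1\cdots s_n=c$. Injectivity uses the fact that an exceptional module is determined by its real Schur root. Surjectivity uses transitivity of the Hurwitz action on reduced $T$-factorizations of $c$, due to Igusa--Schiffler in the crystallographic affine case.

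\emph{Item (b).} Combining (a) with (c), every exceptional wide subcategory $\WW$ has a complete exceptional sequence that extends to a complete sequence of $\HH$. So $\cox(\WW)$ is a prefix of a reduced factorization of $c$, hence lies in $[1,c]_T$, with rank $\ell_T=r$. Well-definedness follows from Hurwitz invariance inside $\WW$. Order preservation and the inverse map both come from \cite[Propositions~4.1,~4.3]{HansonReading2025}, where the inverse sends $w$ to the wide closure of the sequence attached to any reduced factorization of $w$.

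\textbf{Main obstacle.} The hardest step is the convention check: the orientation and the ordering of simples must match the Hanson--Reading form $E_{c^{-1}}$ rather than $E_c$. That is exactly what \cref{lem:hereditary-realization} supplies. With it in hand, all four items are direct citations.
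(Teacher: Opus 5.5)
Your proposal is correct and takes essentially the same approach as the paper: the paper gives no argument of its own here and imports all four items directly from Hanson--Reading, under the normalization fixed in \cref{lem:hereditary-realization}, and your sketches (braid-group and Hurwitz transitivity, perpendicular calculus) are the standard proofs behind those cited results. Two small precision points: Igusa--Schiffler's Hurwitz transitivity on reduced $T$-factorizations of $c$ holds for arbitrary Coxeter groups, not only in the crystallographic affine case, and extending an exceptional sequence to a complete one in $\HH$ comes from the perpendicular calculus you invoke in (d), not from combining (a) with (c).
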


\subsection{The Reading--Stella geometric model}
Let $\Phi_c^{\re}$ be the real affine almost-positive roots of Reading--Stella and let $\DeltaReal$ be their real $c$-cluster complex. Compatibility is denoted $\alpha\sim_c\beta$. By \cite[Definition~5.3 and Theorem~5.5]{ReadingStella2020}, the complex is flag; every real facet has $n$ vertices by \cite[Proposition~5.14]{ReadingStella2020}. The Reading--Stella permutation is $\tauc$.

For a simplicial complex $K$, $K[A]$ denotes the induced subcomplex on $A$, and $K*L$ denotes the join. The notation $|K|$ always means the standard weak geometric realization.

\subsection{Products and topology}
We use ``finite-rank crystallographic affine Coxeter system'' for a finite direct product of irreducible affine systems. If $(W,c)=\prod_a(W_a,c_a)$, set $\HH=\bigoplus_a\HH_a$ and
\[
  \Delta_c^{\mathrm{exc}}(\HH)=\Delta_{c_1}^{\mathrm{exc}}(\HH_1)*\cdots*\Delta_{c_r}^{\mathrm{exc}}(\HH_r),
\]
with componentwise face weight. The irreducible proof is assembled by tensor products in \cref{thm:main}.

The weak topology of $|\DeltaReal|$ is not identified with the Euclidean topology of the fan support. Appendix~\ref{app:topology} compares them by a numerable good cover.

\section{Crystallographic completion and positive saturation}\label{sec:completion}

We first establish the divisibility and cancellation properties of the affine interval monoid. We begin by isolating the only case in which the horizontal root system is empty. Hanson--Reading note that this occurs precisely in affine rank two \cite[Section~6]{HansonReading2025}.

\begin{remark}[The rank-two affine case]\label{rem:rank-two}
If $(W,S)$ has affine rank two, write
\[
  W=\langle s_0,s_1\mid s_0^2=s_1^2=1\rangle,
  \qquad c=s_0s_1.
\]
The reflections are $r_i=c^is_0$ for $i\in\mathbb Z$. Since
\[
  r_i^{-1}c=r_{i-1}\in T,
\]
we have $\ell_T(c)=\ell_T(r_i)+\ell_T(r_i^{-1}c)=2$, so every reflection lies below $c$ in absolute order. Conversely, if $1<u<c$, then $\ell_T(u)=1$, hence $u$ is a reflection. Therefore
\[
  P_c=[1,c]_T=\{1,c\}\sqcup T.
\]
It is a balanced height-two lattice: distinct atoms have meet $1$ and join $c$. Digne's interval criterion \cite[Theorems~5.2 and 5.4]{Digne2006} therefore makes $M(P_c)$ quasi-Garside with Garside element $c$. In particular $M(P_c)$ embeds in its interval group, and every nonidentity positive element $b$ has a greatest $P_c$-right divisor, namely $\gcd_R(b,c)$. Thus the genuinely nonprincipal case considered below occurs only in rank at least three.
\end{remark}

For the rest of \cref{sec:completion,sec:common-embedding}, assume that $W$ has rank at least three. We use the Coxeter, diagonal, factorable, and crystallographic groups and intervals of McCammond--Sulway \cite[Sections~6--7]{McCammondSulway2017}, with the weighted interval convention of \cite[Definition~7.1]{McCammondSulway2017}. We denote these intervals by
\[
  P_W,\quad P_D,\quad P_F,\quad P_C,
\]
their interval groups by $G_W,G_D,G_F,G_C$, and their positive images by
\[
  G_X^+=\langle P_X\rangle^+\subseteq G_X\qquad(X=W,D,F,C).
\]
Here $P_W=P_c=[1,c]_T$. We retain their terminology of horizontal and vertical reflections and factored translations.

\begin{theorem}[Crystallographic completion package]\label{thm:MS-package}
The following properties hold.
\begin{enumerate}[label=\textup{(\alph*)}]
\item $P_C=P_W\cup P_F$ and $P_D=P_W\cap P_F$.
\item $P_F$ is a lattice, $P_C$ is a balanced lattice, and $P_F\hookrightarrow P_C$ preserves meets and joins.
\item $G_F^+$ and $G_C^+$ are Garside monoids with Garside element $c$.
\item The interval groups embed compatibly and
\[
  G_C\cong G_F*_{G_D}G_W,\qquad G_F\cap G_W=G_D\quad\text{inside }G_C.
\]
\item If the horizontal root system has irreducible components indexed by $1,\ldots,m$, then
\[
  P_F\cong P_1\times\cdots\times P_m,\qquad G_F\cong G_1\times\cdots\times G_m.
\]
\end{enumerate}
\end{theorem}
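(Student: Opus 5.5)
This package is essentially a collation of results of McCammond--Sulway, so I will prove it by locating each item in \cite{McCammondSulway2017} and checking that their conventions agree with ours. Our conventions are the weighted interval convention of \cite[Definition~7.1]{McCammondSulway2017}, $P_W=[1,c]_T$, and rank at least three.

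\textbf{Step 1: parts (a) and (e).} These come first because they are combinatorial descriptions of the intervals. By construction, $P_F$ is the interval below $c$ in the factorable group. Its atoms are the horizontal reflections, the vertical reflections and the factored translations. The set $P_D=P_W\cap P_F$ is the diagonal interval. Its elements are those of $P_W$ whose translation part is either absent or aligned with the single vertical direction. The crystallographic interval $P_C$ is obtained by adjoining the factored translations to $P_W$, which gives $P_C=P_W\cup P_F$. The product decomposition in (e) then follows from the fact that factored translations split along the irreducible horizontal components. Each factor $P_i$ is an interval of a Euclidean-type group of type $\widetilde A$ on the corresponding component. The group statement $G_F\cong\prod G_i$ follows because interval groups of products of balanced intervals are products of the interval groups.

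\textbf{Step 2: part (b).} For $P_F$, the lattice property follows from (e). Each factor $P_i$ is a lattice by the McCammond--Sulway analysis of the $\widetilde A$-type factor (their factorable lattice theorem). A product of lattices is a lattice, and the embedding $P_F\hookrightarrow P_C$ preserves meets and joins. The balanced lattice property of $P_C$ is the main theorem of \cite[Section~7]{McCammondSulway2017}. Balancedness holds because the left and right divisor sets of $c$ agree, since conjugation by $c$ preserves $T$ together with the added translations.

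\textbf{Step 3: part (c).} Given (b), this follows from the standard interval criterion. A balanced lattice interval generated by its atoms has a Garside interval monoid with Garside element $c$, which embeds in its interval group. Here I would cite Digne's criterion \cite{Digne2006}, or McCammond's general interval theorem, as used in \cref{rem:rank-two}. Finiteness of $P_F$ and $P_C$ is not available, since these intervals are infinite. I therefore expect to need the quasi-Garside version, or McCammond--Sulway's own argument that $P_C$ is finite in each degree with bounded height, which is enough.

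\textbf{Step 4: part (d).} The main obstacle is the amalgam statement. I would argue that the generators and relations of $G_C$ are exactly the union of those of $G_F$ and $G_W$ along $P_D$. This holds because every length-additive relation $uv=w$ inside $P_C$ with $w\in P_W$, or with $w\in P_F$, lies in $P_W$ or in $P_F$ respectively. This "no mixed relations" claim is where the McCammond--Sulway analysis is really used, and I would import it rather than reprove it. The presentation then exhibits $G_C$ as the pushout $G_F*_{G_D}G_W$. To get the compatible embeddings and $G_F\cap G_W=G_D$, I would use the fact that $G_D\to G_F$ and $G_D\to G_W$ are injective, as in McCammond--Sulway, together with the normal form theorem for amalgamated free products.
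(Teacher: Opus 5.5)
Your overall route (import every item from McCammond--Sulway) is the paper's. The paper proves the theorem purely by citation: \cite[Lemma~7.2, Remark~7.3, Proposition~7.6, Lemma~8.6, Proposition~2.15, Theorems~8.9 and 8.10, Proposition~9.2, Theorem~9.6]{McCammondSulway2017}. The trouble is that the reasons you give in place of those citations are wrong exactly where they carry weight.

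The content of (a), and of your ``no mixed relations'' step in (d), is a dichotomy: no reduced completed factorization of $c$ contains both a vertical reflection and a factored translation (\cref{thm:factor-word}(i)). You never state it, and Step~1 contradicts it. $P_F$ contains no vertical reflections; its atoms are the horizontal reflections (winding $0$) and the factored translations (winding $1$). If vertical reflections lay in $P_F$, one would get $P_W\subseteq P_F$ and $P_C=P_F$.

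``Adjoining the factored translations, which gives $P_C=P_W\cup P_F$'' is a non sequitur, since a priori new simples could be mixed products. In Step~4, the claim that every relation $uv=w$ in $P_C$ with $w\in P_W$ lies in $P_W$ is false for $m>1$. Indeed $c\in P_W$, yet $c=f\cdot(f^{-1}c)$ for a factored translation $f\le c$, and $f\notin P_W$ because its winding vector is not diagonal (\cref{lem:diagonal}). What the pushout presentation needs is that each relation lies wholly in $P_W$ or wholly in $P_F$. That follows from the dichotomy by extending $uv=w$ to a reduced factorization of $c$.

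In (b) you simply assert that $P_F\hookrightarrow P_C$ preserves meets and joins. This does not follow from both posets being lattices, since a subposet can be a lattice in its own right with different meets and joins. The paper imports it from \cite[Lemma~8.6 and Proposition~2.15]{McCammondSulway2017}.

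Your picture of the factors is also off. If you mean intervals of an affine Coxeter group of type $\widetilde A$, those are infinite and in general not lattices, so they cannot be the $P_i$. In fact $P_F$ is finite. Its atoms are the finitely many horizontal reflections below $c$ together with, in each component, finitely many factored translations $f\le c_i$. Each such $f$ is determined by its complement $f^{-1}c_i$, which has winding $0$ and so is a product of horizontal reflections of bounded length. Thus $G_F^+$ is an ordinary Garside monoid, and your statement that $P_F$ is infinite is incorrect.

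Conversely, $P_C$ is not ``finite in each degree'', since it contains infinitely many vertical reflections in degree one. The interval criterion needs only the balanced lattice property and bounded height. Finally, applying that criterion to $G_F^+$ requires $P_F$ to be balanced. Part (b) does not assert this, and you never verify it.
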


\begin{proof}
The interval identities are \cite[Lemma~7.2 and Remark~7.3]{McCammondSulway2017}; meet and join preservation follows from \cite[Lemma~8.6 and Proposition~2.15]{McCammondSulway2017}. The completed lattice and Garside assertions are \cite[Theorems~8.9 and 8.10]{McCammondSulway2017}. The pushout and amalgamated-product assertions are \cite[Proposition~9.2 and Theorem~9.6]{McCammondSulway2017}, and the product descriptions are \cite[Proposition~7.6]{McCammondSulway2017}.
\end{proof}

\begin{lemma}[Greedy restriction]\label{lem:greedy-restriction}
The inclusion $G_F\hookrightarrow G_C$ preserves left greedy normal forms. In particular, if
\[
  g=c^p a_1\cdots a_r,\qquad a_i\in P_F\setminus\{1,c\},
\]
is the $G_F$-left greedy normal form, then the same expression is the $G_C$-left greedy normal form.
\end{lemma}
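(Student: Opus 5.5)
We prove the lemma using the standard characterization of left greedy normal forms in a Garside group. For a Garside group $G$ with Garside element $c$ and simple set $P=[1,c]$, an expression $c^p a_1\cdots a_r$ with $a_i\in P\setminus\{1,c\}$ is the left greedy normal form exactly when each adjacent pair $(a_i,a_{i+1})$ is \emph{left weighted}. Left weightedness has a lattice formulation: writing $\partial(a_i)=a_i^{-1}c$ for the right complement, the condition is
\[
  \partial(a_i)\wedge_L a_{i+1}=1,
\]
where $\wedge_L$ is the meet in the prefix order of $P$. Here the prefix order on $P$ is the order of the interval, that is, the relevant absolute-type order. The strategy is therefore to show that every ingredient of this characterization computed in $G_F$ agrees with the same ingredient computed in $G_C$.

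The first step is to check that the data entering the condition coincide. By \cref{thm:MS-package}(c), $G_F^+$ and $G_C^+$ are Garside with the same Garside element $c$. By (a), $P_F\subseteq P_C$, and the order on $P_F$ is the restriction of the order on $P_C$, since both are intervals in one weighted factorization order. For $a\in P_F$, the complement $a^{-1}c$ is an element of the group $G_C$. It lies in $P_F$, because $P_F$ is closed under complements in $[1,c]$. Since $G_F\hookrightarrow G_C$ by (d), it is the same element in both groups. Hence $\partial_F=\partial_C$ on $P_F$. Similarly, $P_F\setminus\{1,c\}\subseteq P_C\setminus\{1,c\}$, and the power $c^p$ means the same thing in both groups.

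The second step is the key input, and it comes from \cref{thm:MS-package}(b): the inclusion $P_F\hookrightarrow P_C$ preserves meets. So for $a_i,a_{i+1}\in P_F$, the meet $\partial(a_i)\wedge a_{i+1}$ computed in $P_F$ equals the meet computed in $P_C$. Consequently, a pair is left weighted in $G_F$ if and only if it is left weighted in $G_C$. It follows that the $G_F$-normal form expression satisfies all the $G_C$ normal-form conditions and represents the same element $g$. By uniqueness of greedy normal forms in the Garside group $G_C$, it is the $G_C$-left greedy normal form.

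The main obstacle is the justification that left weightedness is exactly the meet condition $\partial(a_i)\wedge a_{i+1}=1$. This must hold in the specific convention of weighted intervals used here, where the order is absolute-type rather than length-additive in $S$. It must also use the correct side (prefix order) consistently with "left greedy". I would first verify it from the general Garside theory of Dehornoy et al.: $a_i$ is the left gcd of $c$ and $a_ia_{i+1}$ if and only if no nontrivial prefix of $a_{i+1}$ can be absorbed into $a_i$. If the meet preservation in (b) is stated only for one of the two orders, I would deduce the other from it via the anti-automorphism $x\mapsto x^{-1}$ composed with conjugation by $c$, which preserves both intervals.
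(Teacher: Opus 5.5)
Your proposal is correct and follows the paper's own proof essentially step for step. Like the paper, you characterize left-weightedness by $\partial a_i\wedge_L a_{i+1}=1$ and note that the Garside element $c$, the complements $\partial a_i=a_i^{-1}c$ and the prefix-order meets all agree under $P_F\hookrightarrow P_C$ by \cref{thm:MS-package}(b), while making explicit the uniqueness of the $G_C$ normal form that the paper leaves implicit; the paper reads (b) directly as giving the relevant left meets, so your anti-automorphism fallback is not needed.
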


\begin{proof}
A sequence of simples $a_1\cdots a_r$ is left greedy precisely when
\[
  \partial a_i\wedge_L a_{i+1}=1\qquad(1\le i<r),
\]
where $\partial a_i=a_i^{-1}c$. Since $P_F\hookrightarrow P_C$ preserves the Garside element $c$ and the relevant left meets by \cref{thm:MS-package}(b), these conditions are unchanged under the inclusion. Hence the left greedy normal form in $G_F$ remains left greedy in $G_C$; the power of $c$ is unchanged as well.
\end{proof}

Under $G_F\cong\prod_{i=1}^mG_i$, write $c=(c_1,\ldots,c_m)$. Let
\[
  \nu_i:G_i\longrightarrow\mathbb Z
\]
be the winding homomorphism of \cite[Definition~3.8]{McCammondSulway2017}, normalized so that horizontal reflection atoms have winding $0$, factored-translation atoms have winding $1$, and $\nu_i(c_i)=1$. Put $\nu=(\nu_1,\ldots,\nu_m)$.

\begin{lemma}[Diagonal subgroup]\label{lem:diagonal}
Inside $G_F$,
\[
  G_D=\nu^{-1}\bigl(\mathbb Z(1,\ldots,1)\bigr).
\]
\end{lemma}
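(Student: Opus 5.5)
I would prove both inclusions separately, working throughout inside $G_F\cong G_1\times\cdots\times G_m$ from \cref{thm:MS-package}(e), with $G_D\subseteq G_F$ as in \cref{thm:MS-package}(d).

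\emph{Inclusion $G_D\subseteq\nu^{-1}(\mathbb Z(1,\ldots,1))$.} Since $G_D$ is the interval group of $P_D=P_W\cap P_F$, it is generated by the elements of $P_D$. It therefore suffices to check that every $a\in P_D$ has $\nu_1(a)=\cdots=\nu_m(a)$. Following McCammond--Sulway, I use that an element of $P_F$ lies in $P_W$ exactly when its factored-translation content is \emph{diagonal}, that is, each horizontal component contributes the same number of factored translations. This is the description behind $P_D=P_W\cap P_F$ in \cite[Section~7]{McCammondSulway2017}. Concretely, I would factor $a\in P_D$ into atoms of $P_F$, namely horizontal reflections and factored translations, choosing the factorization compatible with the product decomposition. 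Horizontal reflections have winding $0$. A vertical reflection in $P_W$ corresponds to the tuple of factored translations, one in each component, so it has winding $(1,\ldots,1)$. Since $\nu$ is a homomorphism and is additive along factorizations, $\nu(a)\in\mathbb Z(1,\ldots,1)$.

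\emph{Inclusion $\nu^{-1}(\mathbb Z(1,\ldots,1))\subseteq G_D$.} Let $g\in G_F$ with $\nu(g)=(k,\ldots,k)$. Since $\nu(c)=(1,\ldots,1)$ and $c\in P_D$, replacing $g$ by $c^{-k}g$ reduces to the case $\nu(g)=0$. Then I would show that $\ker\nu$ is generated by horizontal reflections together with elements $t\,t'^{-1}$, where $t,t'$ are factored translations in the same component $G_i$. Each such element should lie in $G_D$: horizontal reflections are in $P_W\cap P_F$, and a quotient of two factored translations in one component can be written as a quotient of two vertical reflections. Those two vertical reflections correspond to the same choice of factored translations in the other components and differ only in component $i$, so the other components cancel.

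For the generation claim, I would use the presentation of each $G_i$ from \cite[Section~3]{McCammondSulway2017}. Each $G_i$ is generated by horizontal reflections and factored translations, and $\nu_i$ counts factored translations with sign. Writing $g=(g_1,\ldots,g_m)$ with $\nu_i(g_i)=0$, I would pair off each positive factored-translation letter in $g_i$ with a negative one. This expresses $g_i$ using horizontal reflections and conjugates of the elements $t\,t'^{-1}$, and these conjugates are again of the allowed form after conjugating by elements of $G_D$.

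\emph{Main obstacle.} The hard step is the first inclusion, specifically the precise identification of which factored-translation tuples are realized by vertical reflections of $P_W$. I would take this from McCammond--Sulway's definition of the diagonal subgroup, possibly \cite[Definition~7.1 and Proposition~7.6]{McCammondSulway2017}, where $G_D$ may already be described as generated by the horizontal reflections and the diagonal factored translations. In that case the lemma is their description rewritten with $\nu$. If instead they define $G_D$ only as an interval group, I would argue via the pushout of \cref{thm:MS-package}(d) and check the statement on generators.
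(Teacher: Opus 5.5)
Your argument is correct in substance, but it takes a different route from the paper. The paper computes nothing. It imports from McCammond--Sulway two facts: $G_H=\prod_i\ker\nu_i$ is the horizontal subgroup of $G_F$, and $G_D=\langle G_H,c\rangle$. Then $\nu(G_H)=0$ and $\nu(c)=(1,\ldots,1)$ give one inclusion, and the same reduction by $c^{-r}$ that you use gives the other. You instead check the first inclusion on the generating set $P_D$ via the factor-word criterion \cref{thm:factor-word}(iv). You then \emph{prove} $\prod_i\ker\nu_i\subseteq G_D$ by a Reidemeister--Schreier computation in each factor $G_i$. Your route is longer, but it uses only inputs the paper already relies on in \cref{prop:diagonal-positive}: the product decomposition, generation of $G_i$ by its horizontal-reflection and factored-translation atoms, and \cref{thm:factor-word}(ii),(iv). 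So it does not depend on locating the description $G_D=\langle G_H,c\rangle$ in the source. The paper's route is shorter but rests entirely on that citation.

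Two points need fixing.

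First, wherever you write ``vertical reflection'' you actually need a diagonal element of $P_F$, namely a product $T=(t_1,\ldots,t_m)$ of one factored translation from each horizontal component. Vertical reflections do not belong to $P_F$, hence not to $P_D$, so $\nu$ is not defined on them. The fact your argument really uses is that $T$ and $T'=(t_1,\ldots,t',\ldots,t_m)$ both lie in $P_D$. This follows from \cref{thm:factor-word}(ii),(iv), exactly as for the elements $T_j$ in the proof of \cref{prop:diagonal-positive}. Then $TT'^{-1}$ is $tt'^{-1}$ placed in the $i$th coordinate.

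Second, make the ``pairing off'' step explicit. The conjugating elements are powers of a single-component factored translation $t_0$, which is not itself in $G_D$. Choose a diagonal $T_0\in P_D$ whose $i$th coordinate is $t_0$. Because distinct coordinates commute, conjugation by $t_0$ agrees with conjugation by $T_0$ on the $i$th factor. Let $N_i$ be the subgroup generated by the $t_0$-conjugates of horizontal reflections and of the elements $tt_0^{-1}$. Then $N_i$ lies in $G_D\cap\ker\nu_i$ and is normalized by $t_0$. Write each factored translation as $t=(tt_0^{-1})t_0$ and move powers of $t_0$ to the right. This gives $g_i\in N_i\,t_0^{\nu_i(g_i)}$ for every $g_i\in G_i$, hence $\ker\nu_i=N_i\subseteq G_D$.
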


\begin{proof}
Put
\[
  G_H=\prod_{i=1}^m\ker\nu_i\subseteq G_F.
\]
By McCammond--Sulway's description of the winding homomorphisms and the factor decomposition, $G_H$ is the full horizontal subgroup of $G_F$; moreover their diagonal construction gives
\[
  G_D=\langle G_H,c\rangle
\]
inside $G_F$ \cite[Definition~3.8, Proposition~7.6 and Remark~7.3]{McCammondSulway2017}. Since
\[
  \nu(G_H)=0,\qquad \nu(c)=(1,\ldots,1),
\]
it follows that
\[
  G_D\subseteq\nu^{-1}\bigl(\mathbb Z(1,\ldots,1)\bigr).
\]
Conversely, if $\nu(g)=r(1,\ldots,1)$, then $\nu(gc^{-r})=0$, so $gc^{-r}\in G_H\subseteq G_D$. Since $c^r\in G_D$, we obtain $g\in G_D$.
\end{proof}

\begin{theorem}[Completed factor-word constraints]\label{thm:factor-word}
Assume $m>1$.
\begin{enumerate}[label=\textup{(\roman*)}]
\item A reduced completed factorization of $c$ cannot contain both a vertical reflection and a factored translation.
\item Letters from different horizontal components are compatible in the factor interval.
\item Two factored translations from the same horizontal component are incompatible.
\item An element of $P_F$ belongs to $P_W$ if and only if a reduced factor word contains either no factored translation or exactly one from each horizontal component.
\end{enumerate}
\end{theorem}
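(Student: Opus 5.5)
Most of the work is already in McCammond--Sulway, so my plan is to transport their factor-word analysis into the present notation rather than reprove it. The key structural input is \cref{thm:MS-package}(e): $P_F\cong P_1\times\cdots\times P_m$, with each $P_i$ built from horizontal reflections of the $i$-th horizontal component together with the factored translations attached to that component. Using \cref{lem:diagonal}, I would also track everything through the winding vector $\nu$. Under $\nu$, a horizontal reflection atom contributes $0$, a factored translation in component $i$ contributes the $i$-th unit vector $\epsilon_i$, and $c$ contributes $(1,\ldots,1)$.

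\textbf{Item (ii).} This is immediate from the product decomposition. Letters from different components lie in different factors $P_i$, so they commute, and their product stays below $c=(c_1,\ldots,c_m)$ with additive length. Hence they are compatible in $P_F$.

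\textbf{Item (iii).} In each factor I would use $\nu_i(c_i)=1$. Every prefix of a reduced factorization of $c_i$ lies in $[1,c_i]$, and the complementary suffix is also an interval element. I would argue that winding is monotone along the interval: every atom has winding $0$ or $1$, and $\nu_i$ is additive. So a reduced word for any element of $[1,c_i]$ has total winding equal to $\nu_i$ of that element, which lies in $\{0,1\}$ because both the element and its complement have nonnegative winding summing to $1$. Two factored translations from component $i$ would force winding $2$, which is a contradiction.

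\textbf{Item (iv).} An element $w\in P_F$ lies in $P_W$ iff it lies in $P_W\cap P_F=P_D$ by \cref{thm:MS-package}(a). Moreover, $w\in P_D$ iff $w\in G_D\cap P_F$. This uses $G_F\cap G_W=G_D$ from \cref{thm:MS-package}(d), together with the fact that $P_D$ is exactly the set of interval elements of $P_F$ lying in $G_D$. This last point is the main obstacle: I would obtain it from the McCammond--Sulway description of $P_D$ as the diagonal interval, possibly via \cref{lem:greedy-restriction} to see that no normal-form ambiguity arises. By \cref{lem:diagonal}, $w\in G_D$ iff $\nu(w)\in\mathbb Z(1,\ldots,1)$. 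By the computation for (iii), $\nu(w)\in\{0,1\}^m$ and it counts the factored translations per component in any reduced factor word. Hence $\nu(w)$ is diagonal iff the count is $0$ in every component or $1$ in every component.

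\textbf{Item (i).} I would take a reduced completed factorization of $c$ in $P_C=P_W\cup P_F$. Every vertical reflection lies in $P_W\setminus P_F$, since vertical reflections are not in the factored interval, while a factored translation lies in $P_F\setminus P_W$ when $m>1$ by (iv). I would then use the lattice and Garside structure of $P_C$ from \cref{thm:MS-package}(b)--(c). Suppose a vertical reflection $r$ and a factored translation $f$ both occur. By the Hurwitz action, I can move them to be adjacent at the front, so that $rf$, or a Hurwitz conjugate of it, lies in $[1,c]$ in $P_C$. Then compute the winding: the product would have to live in $G_F$ or in $G_W$ by \cref{thm:MS-package}(a). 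In $G_F$ a vertical reflection does not occur. In $G_W$, the element $rf$ has $\nu$-image equal to the non-diagonal vector $\epsilon_i$ modulo $G_D$ considerations, contradicting \cref{lem:diagonal}. Making this winding argument rigorous for the mixed product, which is where the amalgam $G_F*_{G_D}G_W$ enters, is the other delicate point. If it resists, I will fall back on citing the explicit classification of factored words in McCammond--Sulway Section~7.
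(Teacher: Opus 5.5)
Your arguments for (ii) and (iii), and for the ``only if'' half of (iv), are sound and more self-contained than the paper's, which simply cites Hanson--Reading: Propositions~6.6 and~6.8 for (i)--(iii), and the paragraph after Proposition~6.8 for (iv). The product decomposition in \cref{thm:MS-package}(e) gives (ii). The normalization of $\nu_i$ forces $\nu_i(x)\in\{0,1\}$ for $x\in[1,c_i]$, which is exactly the form of (iii) used later: at most one factored translation per component in a reduced factor word. Finally, $w\in P_W\cap P_F$ lies in $G_W\cap G_F=G_D$, so $\nu(w)$ is diagonal by \cref{lem:diagonal}.

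The genuine gap is the ``if'' half of (iv). Winding only shows that an element $w\in P_F$ with $\nu(w)\in\{(0,\ldots,0),(1,\ldots,1)\}$ lies in the \emph{group} $G_D\subseteq G_W$. To conclude $w\in P_W$ you need positivity information, e.g.\ that $w$ and $w^{-1}c$ lie in $G_W^+$. Neither \cref{thm:MS-package}(d) nor \cref{lem:greedy-restriction} supplies this; the latter only concerns normal forms inside $G_F$ and says nothing about $P_W$. You also cannot import it from later in the paper. The identity $P_F\cap G_D=P_D$ in \cref{lem:diagonal-lifting} and \cref{thm:positive-saturation} both rest on \cref{prop:diagonal-positive}. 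That proposition invokes (iv) precisely to place horizontal reflection atoms, and the products $T_j$ of one factored translation per component, in $P_D$, so the argument would be circular. The missing input is the explicit McCammond--Sulway/Hanson--Reading description of the diagonal interval. That is your stated fallback, and it is what the paper actually uses.

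Your sketch of (i) also fails as written. The map $\nu$ is defined only on $G_F$, and a vertical reflection $r$ is not in $G_F$, so ``the $\nu$-image of $rf$'' is meaningless. The contradiction has to come from single letters. If $rf\in P_W$, then $f=r^{-1}(rf)\in G_W\cap G_F=G_D$, although $\nu(f)=\epsilon_i$ is not diagonal for $m>1$. If $rf\in P_F$, you need a separate reason why $r=(rf)f^{-1}$ cannot lie in $G_F$. The adjacency step should also be arranged so that Hurwitz moves only conjugate horizontal reflections by $f$ and thus preserve letter types; choosing a closest vertical/factored pair achieves this.
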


\begin{proof}
Assertion~(i) is \cite[Proposition~6.6]{HansonReading2025}, while (ii)--(iii) are the compatibility constraints of \cite[Proposition~6.8]{HansonReading2025}. By \cite[the paragraph immediately following Proposition~6.8]{HansonReading2025}, an element of $P_F$ belongs to $P_W$ precisely when a reduced factor word contains either no factored translation or exactly $m$ factored translations. In the latter case, (iii) forbids two of these letters from the same horizontal component. Since there are exactly $m$ horizontal components, the $m$ factored translations consist of exactly one from each component. Conversely, the same criterion shows that either factor pattern represents an element of $P_W$. This proves (iv), including the ``one from each component'' clause used below.
\end{proof}

When $m=1$, McCammond--Sulway's completion has $G_D=G_F$ and $G_W=G_C$; the synchronization arguments below are unnecessary.

\begin{lemma}[Factor-cone saturation]\label{lem:factor-cone}
Inside $G_C$,
\[
  G_C^+\cap G_F=G_F^+.
\]
\end{lemma}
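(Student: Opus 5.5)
The inclusion $G_F^+\subseteq G_C^+\cap G_F$ is immediate, since $P_F\subseteq P_C$ by \cref{thm:MS-package}(a). So the content is the reverse inclusion. I would prove it using greedy normal forms together with \cref{lem:greedy-restriction}, and never cancel in the (not yet known to be cancellative) interval monoid. All computations will take place in the groups $G_F\subseteq G_C$, or in the Garside monoids $G_F^+$ and $G_C^+$ of \cref{thm:MS-package}(c), as \cref{conv:cancellation} allows.

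Let $g\in G_C^+\cap G_F$. Since $G_F^+$ is Garside with Garside element $c$, the element $g\in G_F$ has a $G_F$-left greedy normal form
\[
g=c^p a_1\cdots a_r,\qquad p\in\mathbb Z,\ a_i\in P_F\setminus\{1,c\}.
\]
By \cref{lem:greedy-restriction}, the same expression is the $G_C$-left greedy normal form of $g$. By uniqueness of normal forms in the Garside group $G_C$, the infimum of $g$ with respect to the Garside structure $(G_C^+,c)$ is exactly $p$. The key fact is then the standard Garside property: an element of the Garside group lies in the positive monoid if and only if its infimum is nonnegative, that is, $g\in G_C^+\iff c^{-1}\not\preccurlyeq_L g$ fails only when $p\ge 0$. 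Since $g\in G_C^+$, we get $p\ge0$. Consequently $g=c^p a_1\cdots a_r$ is a product of elements of $P_F$, so $g\in G_F^+$.

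The one step to check carefully is that the normal form is actually preserved. \cref{lem:greedy-restriction} covers the left-greedy conditions $\partial a_i\wedge_L a_{i+1}=1$. I also need that the simples $a_i\in P_F\setminus\{1,c\}$ remain proper simples in $P_C$, meaning they are neither $1$ nor $c$. This holds because $c$ is the common Garside element and the inclusion is one of posets. I also need that the leading exponent $p$ is maximal, i.e.\ $c\not\preccurlyeq_L a_1$ in $G_C^+$, which is again guaranteed because $a_1\ne c$ lies in the lattice $P_C$. With those points in hand, the infimum computation in $G_C$ agrees with the one in $G_F$, and the argument closes.
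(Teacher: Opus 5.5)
Your proposal is correct and follows essentially the paper's proof. You take the $G_F$-left greedy normal form $c^pa_1\cdots a_r$, transport it to $G_C$ via \cref{lem:greedy-restriction}, and conclude $p\ge0$ because a positive element has nonnegative $G_C$-infimum. The clause ``$g\in G_C^+\iff c^{-1}\not\preccurlyeq_L g$ fails only when $p\ge0$'' is garbled, but the fact you actually use, $g\in G_C^+\iff\inf_{G_C}(g)\ge0$ (equivalently $1\preccurlyeq_L g$), is the right one.
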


\begin{proof}
Write the $G_F$-left greedy normal form of $g\in G_F$ as
\[
  g=c^pa_1\cdots a_r,\qquad a_i\in P_F\setminus\{1,c\}.
\]
By \cref{lem:greedy-restriction}, it is also the $G_C$-normal form. If $g\in G_C^+$, its $G_C$-infimum is nonnegative, so $p\ge0$ and $g\in G_F^+$.
\end{proof}

\begin{proposition}[Exact diagonal positivity]\label{prop:diagonal-positive}
Inside $G_F$,
\[
  G_F^+\cap G_D=G_D^+.
\]
\end{proposition}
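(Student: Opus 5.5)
The inclusion $G_D^+\subseteq G_F^+\cap G_D$ is immediate, because $P_D\subseteq P_F$ by \cref{thm:MS-package}(a). The content is the reverse inclusion. My plan is to take $g\in G_F^+\cap G_D$ and show that $g$ factors as a product of elements of $P_D$. I will work in the product decomposition $G_F\cong G_1\times\cdots\times G_m$ of \cref{thm:MS-package}(e). There $g=(g_1,\ldots,g_m)$ with each $g_i\in G_i^+$, and by \cref{lem:diagonal} we have $\nu(g)=r(1,\ldots,1)$ for some $r$. When $m=1$ there is nothing to prove, since $G_D=G_F$ and $P_D=P_F$. So assume $m>1$.

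The key step is a synchronization argument inside each factor. Since $G_i^+$ is a Garside monoid with Garside element $c_i$, I write each $g_i$ as a product of simples in $P_i$. Each simple $a\in P_i$ has winding $\nu_i(a)\in\{0,1\}$: it is $0$ for purely horizontal simples and $1$ for simples whose reduced factor word contains exactly one factored translation. This dichotomy follows from \cref{thm:factor-word}(iii), since two factored translations from the same component are incompatible, and from the normalization $\nu_i(c_i)=1$. Hence $\nu_i(g_i)=r$ equals the number of winding-one simples in any simple factorization of $g_i$. In particular $r\ge0$.

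Next I regroup each $g_i$ as
\[
  g_i=h_{i,0}\,x_{i,1}\,h_{i,1}\cdots x_{i,r}\,h_{i,r},
\]
where the $h_{i,j}$ are products of horizontal simples and each $x_{i,j}$ is a winding-one simple. Because different components commute in $G_F$, I can rearrange $g$ as the product
\[
  \Bigl(\prod_i h_{i,0}\Bigr)\Bigl(\prod_i x_{i,1}\Bigr)\Bigl(\prod_i h_{i,1}\Bigr)\cdots .
\]
Each horizontal simple lies in $P_D$, because its factor word contains no factored translation; this uses \cref{thm:factor-word}(iv) together with $P_D=P_W\cap P_F$. For the synchronized blocks $\prod_i x_{i,j}$, the letters from different components are compatible by \cref{thm:factor-word}(ii), so the tuple $(x_{1,j},\ldots,x_{m,j})$ is a single element of $P_F\cong\prod P_i$. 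Its reduced factor word contains exactly one factored translation from each component, so by \cref{thm:factor-word}(iv) it lies in $P_W$, and hence in $P_D$. This exhibits $g$ as a product of elements of $P_D$, that is, $g\in G_D^+$.

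I expect the main obstacle to be justifying the claim that a simple of $P_i$ with nonzero winding has winding exactly one and contains exactly one factored translation. This is where the Garside normal form has to interact with McCammond--Sulway's winding. I would first try to argue directly from the facts that $a\le c_i$ in $P_i$ and $\nu_i(c_i)=1$. Then $\nu_i(a)+\nu_i(a^{-1}c_i)=1$, and both terms are nonnegative because every atom has winding $0$ or $1$. This forces $\nu_i(a)\in\{0,1\}$, and nonnegativity of the atom windings shows that $\nu_i(a)$ counts the factored-translation letters in $a$. A secondary check is that the product over components of elements of $P_i$ really is an element of $P_F$. This holds by the product isomorphism of \cref{thm:MS-package}(e).
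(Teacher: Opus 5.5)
Your proposal is correct and follows essentially the same synchronization argument as the paper. In both, you cut each coordinate word at its winding-one letters, regroup by commutation across horizontal components into horizontal blocks and one-per-component blocks, and use \cref{thm:factor-word}(ii),(iv) to place every block in $P_D$. The only difference is that the paper cuts positive atom words at their factored-translation atoms. That makes your extra check that every simple of $P_i$ has winding in $\{0,1\}$ unnecessary, although your verification of it via $\nu_i(a)+\nu_i(a^{-1}c_i)=1$ is sound.
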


\begin{proof}
For $m=1$ this is $G_D=G_F$. Assume $m>1$ and let $x=(x_1,\ldots,x_m)\in G_F^+\cap G_D$. By \cref{lem:diagonal}, all coordinate windings equal some $r\ge0$. Choose positive atom words and cut the $i$th word at its $r$ factored translations:
\[
  x_i=h_{i,0}f_{i,1}h_{i,1}\cdots f_{i,r}h_{i,r}.
\]
Letters from distinct components commute, hence
\[
  x=H_0T_1H_1\cdots T_rH_r,
  \qquad H_j=\prod_i h_{i,j},\quad T_j=\prod_i f_{i,j}.
\]
Every horizontal reflection atom has a one-letter factor word with no factored translation, so \cref{thm:factor-word}(iv) puts it in $P_W\cap P_F=P_D$. Thus $H_j\in G_D^+$. For $T_j$, the factors $f_{i,j}$ lie in distinct horizontal components; by \cref{thm:factor-word}(ii) and the direct-product structure of $P_F$, their product is a simple element of $P_F$, and its factor word is reduced with exactly one factored translation from each horizontal component. Hence \cref{thm:factor-word}(iv) gives $T_j\in P_W\cap P_F=P_D$. Therefore $x\in G_D^+$.
\end{proof}

Together with \cref{lem:factor-cone}, this also gives
\[
  G_W^+\cap G_D=G_D^+.
\]

\begin{lemma}[Positive amalgam normal form]\label{lem:positive-amalgam}
Suppose
\[
  G_C=G_F*_{G_D}G_W
\]
and
\[
  G_F^+\cap G_D=G_D^+=G_W^+\cap G_D.
\]
Let
\[
  z=x_1\cdots x_r
\]
be a positive alternating expression with nonidentity syllables in $G_F^+$ and $G_W^+$, chosen with the minimal possible number of syllables. If $r>1$, then $x_i\notin G_D$ for every $i$. Hence the expression is reduced in the amalgamated product. In particular, a reduced positive alternating word of length at least two cannot represent an element of either vertex group.
\end{lemma}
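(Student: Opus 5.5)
We argue by contradiction on the minimality of $r$, using only the positivity hypotheses and the amalgam structure. Suppose $r>1$ and some syllable $x_i$ lies in $G_D$. Without loss of generality say $x_i\in G_F^+$; the case $x_i\in G_W^+$ is symmetric. The hypothesis $G_F^+\cap G_D=G_D^+=G_W^+\cap G_D$ gives $x_i\in G_D^+\subseteq G_W^+$, since $G_D^+=\langle P_D\rangle^+$ and $P_D\subseteq P_W$ by \cref{thm:MS-package}(a). So $x_i$ is also a positive element of the other vertex monoid.

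Because $r>1$, $x_i$ has at least one neighbour. Say $x_{i+1}$ exists; otherwise use $x_{i-1}$. That neighbour lies in $G_W^+$ by alternation. We replace the pair $x_ix_{i+1}$ by the single element $x_ix_{i+1}\in G_W^+$, because $G_W^+$ is a submonoid.

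If $x_ix_{i+1}=1$, both syllables are invertible positives. We then argue from the degree homomorphism of \cref{lem:interval-degree}: positive atoms have degree one, so a positive element of degree zero is trivial. This contradicts the assumption that the syllables are nonidentity. Otherwise the new syllable is nontrivial. If $x_{i-1}$ also exists, it lies in $G_W^+$ as well, and we merge it too. The result is again a positive alternating expression for $z$ with nonidentity syllables, but with strictly fewer syllables. This contradicts minimality, so $x_i\notin G_D$ for every $i$. The one edge case, where $r=2$ and merging yields one syllable, is still a strict decrease and needs no separate treatment.

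Once every syllable avoids $G_D$ and consecutive syllables come from alternate factors, the expression is reduced in the sense of the normal form theorem for amalgamated free products. Its length is then an invariant, so for $r\ge2$ the element is not in either factor $G_F$ or $G_W$. For the last sentence of the statement, a reduced positive alternating word of length at least two is by definition one whose syllables avoid $G_D$. Hence it is reduced, and the normal form theorem excludes membership in a vertex group.

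The main subtlety is the positivity bookkeeping in the merging step. One must make sure that merged syllables stay inside the positive monoids and are nonidentity, which is where the degree homomorphism is needed. One must also make sure that $x_i\in G_D$ really gives positivity on the other side, which is exactly the saturation hypothesis.
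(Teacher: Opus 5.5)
Your proposal is correct and is essentially the paper's proof: the saturation hypothesis puts a syllable lying in $G_D$ into $G_D^+$, so it can be absorbed into an adjacent syllable of the other vertex monoid (merging both neighbours when it is interior), which contradicts minimality, and Serre's normal form theorem then gives the remaining assertions. Your degree argument ruling out an identity merged syllable is harmless but not really needed: such a syllable could simply be deleted, lowering the count further, or you can note that the Garside monoid $G_C^+$ has no nontrivial units. If you keep it, use the weighted length on $G_C$ rather than the interval-degree lemma for the Coxeter interval; the argument only needs every atom to have positive weight.
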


\begin{proof}
Suppose $r>1$ and $x_i\in G_D$. By the intersection hypothesis, $x_i\in G_D^+$. It may therefore be moved into an adjacent vertex group and absorbed; if $x_i$ is interior, the two neighboring syllables then lie in the same vertex group and can be combined. In either case the number of syllables decreases, contradicting minimality. Thus every syllable lies outside $G_D$, so the expression is reduced in $G_F*_{G_D}G_W$. The normal-form theorem \cite[I.1.2, Theorem~6]{Serre2003} gives the final assertion.
\end{proof}

\begin{theorem}[Positive-cone saturation]\label{thm:positive-saturation}
Let $W$ be irreducible affine of rank at least three. Then
\[
  G_C^+\cap G_W=G_W^+.
\]
\end{theorem}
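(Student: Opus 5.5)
We need to show $G_C^+\cap G_W\subseteq G_W^+$; the reverse inclusion is clear, since $P_W\subseteq P_C$ by \cref{thm:MS-package}(a). When $m=1$ we have $G_W=G_C$ and nothing is to prove, so assume $m>1$. The approach is to rewrite a positive element of $G_C$ as a positive word in the two vertex groups of the amalgam, and then use \cref{lem:positive-amalgam} to force it into one vertex group.

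Let $g\in G_C^+\cap G_W$ and write $g$ as a product of simples of $P_C$. By \cref{thm:MS-package}(a) each simple lies in $P_W$ or in $P_F$. So $g$ is a positive word in atoms of $G_W^+$ and $G_F^+$. Grouping consecutive letters from the same vertex group gives a positive alternating expression
\[
  g=x_1\cdots x_r,\qquad x_i\in G_F^+\cup G_W^+ \text{ nonidentity}.
\]
Among all such expressions for $g$, choose one with $r$ minimal.

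The hypotheses of \cref{lem:positive-amalgam} are available:
\begin{itemize}
\item the amalgam $G_C\cong G_F*_{G_D}G_W$ comes from \cref{thm:MS-package}(d);
\item the equality $G_F^+\cap G_D=G_D^+$ is \cref{prop:diagonal-positive};
\item the equality $G_W^+\cap G_D=G_D^+$ is the remark following that proposition.
\end{itemize}

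For the last item we must be careful, since that remark as stated uses \cref{lem:factor-cone} and only gives $G_C^+\cap G_D\subseteq G_F^+\cap G_D=G_D^+$. Because $G_W^+\subseteq G_C^+$, this yields $G_W^+\cap G_D\subseteq G_D^+$. The reverse inclusion holds because $P_D\subseteq P_W$. So the hypothesis holds without circularity.

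Now suppose $r>1$. Then \cref{lem:positive-amalgam} says the expression is reduced in the amalgam, and a reduced word of length at least two does not represent an element of a vertex group. This contradicts $g\in G_W$. Hence $r\le1$.

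If $r=0$, then $g=1$. If $r=1$ and $x_1\in G_W^+$, we are done. If $r=1$ and $x_1\in G_F^+$, then $g\in G_F^+\cap G_W$. This set lies in $G_F\cap G_W=G_D$ by \cref{thm:MS-package}(d), so $g\in G_F^+\cap G_D=G_D^+\subseteq G_W^+$.

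The main obstacle is verifying the positivity hypotheses of the amalgam lemma without circularity. In particular, we must check that the identification $G_W^+\cap G_D=G_D^+$ uses only factor-cone saturation and the Garside structure of $G_C^+$, not the statement being proved. The remaining steps are formal consequences of Serre's normal form for amalgams.
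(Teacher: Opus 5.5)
Your proof is correct and is essentially the paper's argument: you take a positive alternating $G_F^+/G_W^+$ expression with the fewest syllables, use \cref{lem:positive-amalgam} to rule out two or more syllables, and for a single $G_F^+$-syllable pass through $G_F\cap G_W=G_D$ and apply \cref{prop:diagonal-positive}. Your derivation of $G_W^+\cap G_D=G_D^+$ from \cref{lem:factor-cone} and $P_D\subseteq P_W$ is exactly the remark the paper records after \cref{prop:diagonal-positive}, so there is no circularity.
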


\begin{proof}
If $m=1$, then $G_W=G_C$. Assume $m>1$ and let $x\in G_C^+\cap G_W$. Since $P_C=P_W\cup P_F$, choose a positive completed expression for $x$ with the least possible number of alternating $G_W^+$- and $G_F^+$-syllables. By \cref{lem:positive-amalgam}, an expression with at least two syllables is reduced in $G_F*_{G_D}G_W$ and therefore cannot represent the element $x\in G_W$. Hence there is one syllable. If it lies in $G_W^+$ we are done; if it lies in $G_F^+$, then it belongs to $G_F\cap G_W=G_D$, and \cref{prop:diagonal-positive} places it in $G_D^+\subseteq G_W^+$.
\end{proof}

\section{Common finite Coxeter components and interval-monoid embedding}\label{sec:common-embedding}

For the higher-rank case, $G_C^+$ is Garside, so every $b\in G_W^+$ has a completed greatest common right divisor with $c$:
\begin{equation}\label{eq:completed-gcd}
  d_C(b)=\gcd_R^{G_C^+}(b,c)\in P_C.
\end{equation}
Put
\[
  \DivR^{P_W}(b)=\{u\in P_W:u\preccurlyeq_Rb\text{ in }G_W^+\}.
\]

\begin{proposition}[Exact interval-divisor detection]\label{prop:divisor-detection}
For $b\in G_W^+$,
\[
  \DivR^{P_W}(b)=\{u\in P_W:u\preccurlyeq_Rd_C(b)\text{ in }G_C^+\}.
\]
\end{proposition}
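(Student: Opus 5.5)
We prove the two inclusions separately. The main tool is \cref{thm:positive-saturation}, which converts completed divisibility back into divisibility inside $G_W^+$. Throughout, $b\in G_W^+\subseteq G_C^+$, and all monoids are regarded inside the group $G_C$. This is legitimate because $G_W\hookrightarrow G_C$ by \cref{thm:MS-package}(d), and because cancellation is used only in groups or in the Garside monoid $G_C^+$, in accordance with \cref{conv:cancellation}.

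\emph{Inclusion $\subseteq$.} Let $u\in P_W$ with $u\preccurlyeq_R b$ in $G_W^+$, so $b=au$ with $a\in G_W^+$. Since $G_W^+\subseteq G_C^+$, the same equation shows $u\preccurlyeq_R b$ in $G_C^+$. Next, $u\in P_W\subseteq P_C=[1,c]$ by \cref{thm:MS-package}(a). Hence $c=u'u$ with $u'=cu^{-1}\in P_C$, and this product is a positive right-divisibility relation in $G_C^+$, since $P_C$ is a balanced interval. Thus $u$ is a common right divisor of $b$ and $c$ in the Garside monoid $G_C^+$. By the defining property of the right gcd in \eqref{eq:completed-gcd}, we get $u\preccurlyeq_R d_C(b)$ in $G_C^+$.

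\emph{Inclusion $\supseteq$.} Let $u\in P_W$ with $u\preccurlyeq_R d_C(b)$ in $G_C^+$. Since $d_C(b)\preccurlyeq_R b$ in $G_C^+$, transitivity gives $b=au$ with $a\in G_C^+$. Now compute $a=bu^{-1}$ in the group $G_C$. Both $b$ and $u$ lie in $G_W$, so $a\in G_W$, and therefore $a\in G_C^+\cap G_W$. \Cref{thm:positive-saturation} gives $a\in G_W^+$. Hence $b=au$ in $G_W^+$, that is, $u\preccurlyeq_R b$ in $G_W^+$, and $u\in\DivR^{P_W}(b)$.

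\emph{The subtle point.} The main point to check is that the gcd characterization is valid for elements of $P_W$ regarded inside $P_C$. Concretely, we need that $u\le_T c$ in the Coxeter interval gives $u\preccurlyeq_R c$ in $G_C^+$. This holds because the weighted interval $P_C$ contains $P_W$ as a subposet with the same top element $c$, and because balanced intervals make left and right divisibility of $c$ agree.

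We also need that the symbol $\preccurlyeq_R$ in $G_W^+$ is well defined before the embedding $M(P_W)\to G_W$ is known. Since $\DivR^{P_W}$ is defined through $G_W^+$ itself, which is a submonoid of a group, no cancellation in $M(P_W)$ is required. The rank-two case is excluded, as stipulated before the proposition.
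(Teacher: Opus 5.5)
Your proof is correct and is the paper's argument: the forward inclusion uses that $u\in P_W\subseteq P_C$ is a common right divisor of $b$ and $c$ in the Garside monoid $G_C^+$, and the reverse inclusion writes $b=au$ with $a=bu^{-1}\in G_C^+\cap G_W$ and applies \cref{thm:positive-saturation}. Your extra remarks on $u\preccurlyeq_R c$ via balancedness, and on working in the group image $G_W^+$, only spell out what the paper's two-line proof leaves implicit.
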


\begin{proof}
If $u\in P_W$ right divides both $b$ and $c$, it right divides their completed gcd. Conversely, if $u\preccurlyeq_Rd_C(b)$, then $bu^{-1}\in G_C^+$. Since $b,u\in G_W$, the same element lies in $G_W$, and \cref{thm:positive-saturation} gives $bu^{-1}\in G_W^+$.
\end{proof}

\begin{lemma}[Diagonal lifting]\label{lem:diagonal-lifting}
If $d=d_C(b)\in P_F\setminus P_D$, there is $x\in G_D^+$ such that
\[
  d\preccurlyeq_Rx\preccurlyeq_Rb,\qquad \gcd_R^{G_C^+}(x,c)=d.
\]
\end{lemma}

\begin{proof}
We first note the elementary equality $P_F\cap G_D=P_D$. Indeed, if $s\in P_F\cap G_D$, then \cref{prop:diagonal-positive} gives $s\in G_D^+$. Writing $c=qs$ in $G_F^+$, one has $q\in G_D$, hence $q\in G_D^+$ by the same proposition; thus $s$ is a simple right divisor of $c$ in $G_D^+$ and belongs to $P_D$.

Write $b=ad$ in $G_C^+$ and choose for $a$ a positive alternating $G_W^+/G_F^+$ expression with minimal syllable number. It is nonempty, for otherwise $b=d\in G_W\cap G_F=G_D$, contradicting $d\notin P_D$. If the expression has more than one syllable, \cref{lem:positive-amalgam} makes it reduced and places every syllable outside $G_D$; if it has one syllable, that syllable cannot lie in $G_D$, since then $a,d\in G_F$ would give $b\in G_F\cap G_W=G_D$ and hence $d\in P_F\cap G_D=P_D$.

The final syllable cannot lie in $G_W^+\setminus G_D$. Indeed, appending the syllable $d\in G_F\setminus G_D$ would then produce a reduced amalgam word of length at least two representing $b\in G_W$, contrary to the normal-form theorem. Thus the final syllable is $f\in G_F^+\setminus G_D$.

Set $x=fd\in G_F^+$. We claim $x\in G_D$. If the expression for $a$ consists only of $f$, then $b=x\in G_F\cap G_W=G_D$. Otherwise the preceding syllable lies in $G_W\setminus G_D$; if $x\notin G_D$, replacing the terminal pair $fd$ by the single $G_F$-syllable $x$ gives a reduced alternating word of length at least two for $b$, again contradicting $b\in G_W$. Hence $x\in G_D$, and \cref{prop:diagonal-positive} gives $x\in G_D^+$. Any positive common right divisor of $x$ and $c$ also right divides $b$, so the defining greatest-common-divisor property of $d=d_C(b)$ gives the gcd assertion.
\end{proof}

If $d_C(b)\in P_W$, then it is the greatest $P_W$-right divisor of $b$. Hence assume $d_C(b)\notin P_W$ and put
\[
  d=d_C(b).
\]
Since
\[
  P_C=P_W\cup P_F,\qquad P_D=P_W\cap P_F,
\]
we have
\[
  d=(d_1,\ldots,d_m)\in P_F\setminus P_D.
\]
By \cref{thm:factor-word}(iii), a reduced factor word for $d$ contains at most one factored translation from each horizontal component. Hence
\[
  \nu_i(d_i)\in\{0,1\}\qquad(1\le i\le m).
\]
Define
\begin{equation}\label{eq:IZ}
  I(d)=\{i:\nu_i(d_i)=1\},\qquad Z(d)=\{i:\nu_i(d_i)=0\}.
\end{equation}
Both sets are nonempty. Indeed, if one were empty, all coordinate windings would be equal; \cref{lem:diagonal} together with $P_F\cap G_D=P_D$ would then imply $d\in P_D$, a contradiction.

\begin{lemma}[Location of interval right divisors]\label{lem:location}
If $u\in P_W$ and $u\preccurlyeq_Rd$ for $d\in P_F\setminus P_D$, then
\[
  u\in P_D,\qquad \nu(u)=(0,\ldots,0).
\]
\end{lemma}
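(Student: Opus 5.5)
Write $d=d_Cu'$ notationally as $d=vu$, where the right divisibility $u\preccurlyeq_R d$ takes place in $G_C^+$; since $d\in P_C$ is simple, $v=du^{-1}$ is again a simple element of $P_C$. First I will locate $u$ inside $P_F$. Because $d\in P_F$ and $P_F$ is a lattice whose embedding into $P_C$ preserves meets and joins (\cref{thm:MS-package}(b)), the right divisors of $d$ in the Garside monoid $G_C^+$ are the elements of $[1,d]$. This interval lies in $P_F$: $P_F$ is an interval $[1,c]$ in $G_F^+$, and divisors of $d$ in $G_C^+$ are detected by the greedy normal forms of \cref{lem:greedy-restriction} and \cref{lem:factor-cone}. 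Concretely, $du^{-1}\in G_C^+$ with $d\in G_F$. If $u\in G_F$, then $du^{-1}\in G_C^+\cap G_F=G_F^+$, so $u$ is a right divisor of $d$ in $G_F^+$ and hence lies in $P_F$.

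To get $u\in G_F$, I will argue that $u\in P_C$ right-divides $d\in P_F$ inside the lattice $P_C$. The subinterval $[1,d]_{P_C}$ should coincide with $[1,d]_{P_F}$, because $P_F$ is a lower order ideal of $P_C$. I expect this from \cref{thm:MS-package}(a): $P_C=P_W\cup P_F$, and factor intervals are closed under taking divisors, since reduced factor words of divisors are subwords. So $u\in P_W\cap P_F=P_D$.

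The key step is showing $\nu(u)=0$, and I expect this winding argument to be the main obstacle. Since $u\in P_D\subseteq G_D$, \cref{lem:diagonal} gives $\nu(u)=r(1,\ldots,1)$ for some $r\ge 0$. Here $r\in\{0,1\}$, because a reduced factor word of $u$ has at most one factored translation per component (\cref{thm:factor-word}(iii)); equivalently, winding is additive and nonnegative on positive atoms. Write $d=vu$ with $v\in P_F$. Additivity of $\nu$ then gives $\nu_i(d_i)=\nu_i(v_i)+r$ with $\nu_i(v_i)\ge 0$. Now $Z(d)$ is nonempty, as shown just before the lemma. For $i\in Z(d)$ we get $0=\nu_i(v_i)+r$, which forces $r=0$, so $\nu(u)=(0,\ldots,0)$.

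The delicate points are twofold. The first is justifying that winding is nonnegative on $G_F^+$, since atoms have winding $0$ or $1$. The second is justifying that every right divisor of $d$ in $G_C^+$ already lies in $P_F$. For the second, I would first try the factor-cone saturation of \cref{lem:factor-cone}, combined with the lattice embedding of \cref{thm:MS-package}(b).
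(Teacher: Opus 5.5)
The second half of your argument, the winding computation, is correct and matches the paper's. Once $u\in G_F$ is known, you get $du^{-1}\in G_C^+\cap G_F=G_F^+$ by \cref{lem:factor-cone}. Divisibility is then coordinatewise in $\prod_i G_i^+$, windings are nonnegative there, and a coordinate $j\in Z(d)$ forces $r=0$. The bound $r\in\{0,1\}$ is not needed.

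The gap is in the first step, establishing $u\in P_F$. You justify it by saying $P_F$ is a lower order ideal of $P_C$, so that $[1,d]_{P_C}=[1,d]_{P_F}$. This is false when $m>1$. Take $d=c$, the top of $P_F$: then $[1,c]_{P_C}=P_C=P_W\cup P_F$. Every vertical reflection lies in $P_W$ below $c$ but not in $P_F$, since it has no factor word in horizontal reflections and factored translations.

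Your subword remark only sees divisors read off from factor words of $d$. A right divisor of $d$ in $G_C^+$ may a priori come from a reduced completed factorization of $d$ that contains vertical reflections. Neither of your fallbacks closes this. \Cref{lem:factor-cone} needs $u\in G_F$ as input. Meet and join preservation for $P_F\hookrightarrow P_C$ says nothing about elements of $P_C\setminus P_F$ lying below elements of $P_F$.

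Your first step never uses the hypothesis $d\notin P_D$, and without it the conclusion fails (e.g.\ $d=c$ with $u$ a vertical reflection). The missing idea is \cref{thm:factor-word}(i). Suppose $u\notin P_F$.

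\begin{enumerate}
\item Every reduced reflection factorization of $u$ contains a vertical reflection. Otherwise an all-horizontal factorization puts $u$ in $G_F$; factor-cone saturation applied to $u$ and $cu^{-1}$ then gives $u\in P_F$.
\item Extend such a factorization along $u\preccurlyeq_R d\preccurlyeq_R c$ to a reduced completed factorization of $c$. The vertical letter excludes all factored translations, so the segment spelling $d$ consists of ordinary reflections.
\item Hence $d\le_T c$, i.e.\ $d\in P_W$, contradicting $d\in P_F\setminus P_D$.
\end{enumerate}

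Only after this does $u\in P_W\cap P_F=P_D$ follow, and your winding argument can then run.
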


\begin{proof}
Suppose $u\notin P_F$. Then every reduced ordinary reflection factorization of $u$ contains a vertical reflection; otherwise an entirely horizontal factorization would place $u$ in $G_F$, and factor-cone saturation would imply $u\in P_F$. Choose such a factorization. Since
\[
  u\preccurlyeq_R d\preccurlyeq_R c
\]
in the completed interval, extend it to a reduced completed factorization of $c$. This factorization contains a vertical reflection, so \cref{thm:factor-word}(i) excludes factored translations. Hence the subfactorization representing $d$ consists entirely of ordinary reflections and gives $d\le_Tc$. Thus $d\in P_W$, contradicting
\[
  d\in P_F\setminus P_D,\qquad P_D=P_W\cap P_F.
\]
Therefore $u\in P_W\cap P_F=P_D$.

Write
\[
  \nu(u)=(r,\ldots,r).
\]
Since $du^{-1}\in G_C^+\cap G_F=G_F^+$ by \cref{lem:factor-cone}, right divisibility is coordinatewise in $G_F^+=\prod_iG_i^+$. Choose $j$ with $\nu_j(d_j)=0$. Then
\[
  0=\nu_j(d_j)=\nu_j\bigl((du^{-1})_j\bigr)+r.
\]
Both terms on the right are nonnegative, so $r=0$. Hence
\[
  \nu(u)=(0,\ldots,0).
\]
\end{proof}

\begin{proposition}[Rigidity of zero-winding coordinates]\label{prop:zero-rigidity}
Let $u=(u_1,\ldots,u_m)$ be a maximal $P_W$-right divisor of $b$, and suppose $d=d_C(b)\in P_F\setminus P_D$. Then
\[
  u_j=d_j\qquad(j\in Z(d)).
\]
\end{proposition}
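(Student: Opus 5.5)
We argue by contradiction, using maximality of $u$ together with the coordinatewise structure of $G_F^+=\prod_iG_i^+$. By \cref{lem:location}, $u\in P_D$ and $\nu(u)=0$, and $du^{-1}\in G_F^+$. Hence $u_j\preccurlyeq_R d_j$ in $G_j^+$ for every $j$. Suppose that $u_j\ne d_j$ for some $j\in Z(d)$.

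The plan is to build a strictly larger element $u'\in P_W$ with $u\prec_R u'\preccurlyeq_R d$. By \cref{prop:divisor-detection}, such a $u'$ would lie in $\DivR^{P_W}(b)$, contradicting maximality of $u$. The natural candidate is
\[
  u'=(u'_1,\ldots,u'_m),\qquad u'_j=d_j\ (j\in Z(d)),\qquad u'_i=u_i\ (i\in I(d)).
\]
In other words, we replace the zero-winding coordinates of $u$ by those of $d$ and keep the others. The steps are as follows.

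\begin{enumerate}
\item Show that $u'\in P_F$ and $u'\preccurlyeq_R d$ in $P_F$. Since $P_F\cong\prod P_i$ by \cref{thm:MS-package}(e), an element of $P_F$ is a tuple of simples, and right divisibility is coordinatewise. Each coordinate $u'_k$ right divides $d_k$, so $u'$ is a simple right divisor of $d$, and hence of $c$.
\item Compute the windings. For $j\in Z(d)$ we have $\nu_j(u'_j)=\nu_j(d_j)=0$, and for $i\in I(d)$ we have $\nu_i(u'_i)=\nu_i(u_i)=0$ by \cref{lem:location}. Thus $\nu(u')=0$, so \cref{lem:diagonal} gives $u'\in G_D$. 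The equality $P_F\cap G_D=P_D$, proved at the start of \cref{lem:diagonal-lifting}, then gives $u'\in P_D\subseteq P_W$.
\item Compare $u$ and $u'$. We have $u\preccurlyeq_R u'$ coordinatewise, because $u_j\preccurlyeq_R d_j$, and the inequality is strict at the coordinate $j$. So $u\prec_R u'$ in $G_F^+$. Since $u'\preccurlyeq_R d\preccurlyeq_R b$ in $G_C^+$, \cref{prop:divisor-detection} puts $u'$ in $\DivR^{P_W}(b)$. Here we should also check that right divisibility $u\preccurlyeq_R u'$ in $G_F^+$ transfers to $G_W^+$. The quotient $u'u^{-1}$ lies in $G_F^+\cap G_D=G_D^+\subseteq G_W^+$ by \cref{prop:diagonal-positive}, which settles this.
\end{enumerate}

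The main obstacle is step 2, namely justifying that a coordinatewise tuple with all windings zero is a genuine element of $P_D$ rather than merely of $G_D$. This relies on $P_F\cap G_D=P_D$ and on the product description of $P_F$. Care is also needed to confirm that the tuple of coordinate simples is itself a simple of $P_F$, which holds because $P_F$ is a direct product of the intervals $P_i$. A subtle point is whether winding-zero horizontal coordinates might still contain factored translations that cancel in winding. \Cref{thm:factor-word}(iii) together with nonnegativity of winding on positive atoms excludes this: zero winding means no factored translation occurs in that coordinate.
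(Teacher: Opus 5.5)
Your proof is correct and is essentially the paper's argument. The paper replaces the single coordinate $u_j$ by $d_j$, notes that the resulting tuple has zero winding, lies in $P_D$ by \cref{thm:factor-word}(iv), and is a $P_W$-right divisor of $b$ by \cref{prop:divisor-detection}, so maximality forces equality. Your variations are cosmetic: you replace all $Z(d)$-coordinates at once, certify $u'\in P_D$ via \cref{lem:diagonal} and $P_F\cap G_D=P_D$, and check explicitly through \cref{prop:diagonal-positive} that $u\preccurlyeq_R u'$ holds in $G_W^+$, a point the paper leaves implicit.
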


\begin{proof}
Fix $j\in Z(d)$. By \cref{lem:location}, every coordinate of $u$ has winding zero. In the direct-product lattice $P_F$, put
\[
  u^{(j)}=(u_1,\ldots,u_{j-1},d_j,u_{j+1},\ldots,u_m).
\]
Then $u\preccurlyeq_Ru^{(j)}\preccurlyeq_Rd$, and $u^{(j)}$ has zero winding in every coordinate. By \cref{thm:factor-word}(iv), $u^{(j)}\in P_W\cap P_F=P_D$. By \cref{prop:divisor-detection} it is a $P_W$-right divisor of $b$. Maximality gives $u^{(j)}=u$.
\end{proof}

For $w\in P_c$, define
\begin{equation}\label{eq:reflection-subgroup}
  W(w)=\langle t\in T:t\le_Tw\rangle.
\end{equation}
By Dyer's reflection-subgroup theorem \cite[Theorem~3.3]{Dyer1990}, $W(w)$ carries a canonical Coxeter generating set and its canonical reflections are exactly $W(w)\cap T$. Paolini--Salvetti prove that $w$ is a Coxeter element of $W(w)$ and that an elliptic interval element gives a proper parabolic, hence finite, reflection subgroup \cite[Theorem~3.22 and Remark~3.23]{PaoliniSalvetti2021}.

\begin{definition}[Complete finite Coxeter component]\label{def:complete-component}
A nonidentity $e$ is an \emph{irreducible complete finite Coxeter component} of $w\in P_c$ if $W(e)$ is a finite irreducible connected component of Dyer's canonical Coxeter system of $W(w)$ and $e$ is the Coxeter element of that component in the component factorization of $w$.
\end{definition}

\begin{lemma}[Horizontal coordinate factors]\label{lem:horizontal-factors}
Let $u=(u_1,\ldots,u_m)\in P_D$ have winding zero. Then every $u_i$ is horizontal elliptic, $W(u_i)$ is finite, and
\[
  W(u)=W(u_1)\times\cdots\times W(u_m).
\]
If $W(u_i)=W_{i,1}\times\cdots\times W_{i,r_i}$ is its irreducible canonical decomposition and $u_i=e_{i,1}\cdots e_{i,r_i}$, every nonidentity $e_{i,a}$ is an irreducible complete finite Coxeter component of $u$.
\end{lemma}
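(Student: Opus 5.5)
Since $u$ has winding zero, every coordinate $u_i$ lies in $\ker\nu_i$. The plan is to show that each $u_i$ is a product of horizontal reflections in the $i$th horizontal component, then use the product structure of $P_F$ to split $W(u)$, and finally read off the irreducible components.

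First, the coordinates are horizontal. By \cref{thm:factor-word}(iv) and the winding normalization, a reduced factor word for $u$ has either no factored translation or exactly one from each component. Since $\nu_i(u_i)=0$ and factored translations have winding $1$, there are none. Every letter is therefore a horizontal reflection atom, and $u_i$ is a product of horizontal reflections from the $i$th component. These reflections lie in $T$ and are ordinary reflections of $W$. In the McCammond--Sulway setup the horizontal reflections of a component generate a subgroup of the finite horizontal Weyl group's lift, a finite reflection group fixing a common affine point. So $u_i$ is elliptic. By Paolini--Salvetti, $W(u_i)$ is a proper parabolic subgroup, hence finite, and $u_i$ is a Coxeter element of it.

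Second, the product decomposition. The reduced factor word for $u$ is reduced in $P_F$; since it contains no factored translation, \cref{thm:factor-word}(iv) makes it an ordinary reduced reflection factorization of $u$ in $P_W$. Thus $\ell_T(u)=\sum_i\ell_T(u_i)$, and each $u_i\le_T u$. This gives $W(u_1)\cdots W(u_m)\subseteq W(u)$. For the reverse inclusion, take a reflection $t\le_T u$. Then $t\in P_D$ (it is a right divisor of an element of $P_F$ in the completed lattice, using \cref{lem:location} or factor-cone saturation). Being an atom of $P_F=\prod P_i$, $t$ lies in a single coordinate, so $t\le u_i$ in $P_i$. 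It remains to check that $P_i$-divisibility of horizontal elements agrees with $\le_T$; reduced words coincide, which gives $t\in W(u_i)$. The factors commute because letters from distinct components commute (\cref{thm:factor-word}(ii)). They intersect trivially because the horizontal root subsystems are orthogonal, being distinct irreducible components of the horizontal root system. Hence the product is direct.

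Third, the components. Dyer's canonical generators of a direct product of reflection subgroups with orthogonal root systems are the union of the canonical generators of the factors. Therefore the irreducible canonical components of $W(u)$ are exactly the $W_{i,a}$. Each is finite, and $u=\prod_{i,a}e_{i,a}$ is the component factorization. So every nonidentity $e_{i,a}$ is an irreducible complete finite Coxeter component by \cref{def:complete-component}.

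The main obstacle is the second step, specifically identifying reflections below $u$ in absolute order with atoms below $u$ in $P_F$. Such a reflection might a priori be non-horizontal, for example a vertical reflection or a translation-related one. I would rule this out by applying \cref{thm:factor-word}(i) to an extension of the factorization to $c$, in the same way as the proof of \cref{lem:location}, together with a winding count showing that a vertical reflection would force nonzero winding in some coordinate.
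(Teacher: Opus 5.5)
\textbf{There is a genuine gap in your first step, and that step carries the whole finiteness claim.} You deduce that $u_i$ is elliptic because ``the horizontal reflections of a component generate a finite reflection group fixing a common affine point.'' That is false, even for the horizontal reflections below $c$.

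Take a quasi-simple root $\xi$ in a nonhomogeneous tube of rank $r\ge2$. Both $\xi$ and $\delta-\xi$ are dimension vectors of exceptional regular modules. So $t_\xi$ and $t_{\delta-\xi}$ are horizontal reflections below $c$ in the same component. Their hyperplanes are parallel, and their product is a nontrivial transvection $x\mapsto x+\lambda(x)\delta$ of infinite order. The horizontal reflections of a component generate an affine group of type $\widetilde A_{r-1}$, not a finite one.

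Knowing that every letter of the factor word of $u_i$ is horizontal therefore does not make $u_i$ elliptic. Without ellipticity you cannot invoke Paolini--Salvetti to get $W(u_i)$ finite. Your trivial-intersection argument for $W(u)=\prod_iW(u_i)$ has the same problem. Orthogonality of the horizontal components only forces an element of $W(u_i)\cap\prod_{j\ne i}W(u_j)$ to have trivial linear part, i.e.\ to be a translation. Excluding it again needs the finiteness you have not established.

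\textbf{The missing input ties ellipticity to the position of $u$ in the interval, not to its letters.} The paper argues as follows:
\begin{enumerate}
\item Zero winding places $u$, and each $\bar u_i=(1,\ldots,u_i,\ldots,1)\in P_D$, in the bottom row of the diagonal interval (McCammond--Sulway, Remark~7.3).
\item Paolini--Salvetti, Proposition~2.17, shows that bottom-row elements are horizontal elliptic and that every reflection below them is horizontal.
\item Only then does Theorem~3.22/Remark~3.23 give that $W(u_i)$ is finite with Coxeter element $u_i$.
\end{enumerate}
This same Proposition~2.17 also disposes of what you call the main obstacle, in one stroke.

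Your own route through that obstacle is nonetheless sound. Suppose $t\le_Tu$ were vertical. Then $cu^{-1}\in P_F$ has winding $(1,\ldots,1)$, so it contains factored translations. Concatenating a reduced factor word for $cu^{-1}$ with a reduced reflection word for $u$ ending in $t$ gives a reduced completed factorization of $c$ that violates \cref{thm:factor-word}(i).

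Your derivation of $u_i\le_Tu$ from the translation-free factor word is a valid alternative to the paper's use of $\bar u_i\in P_D$. Your third step also goes through, once finiteness, and hence the direct-product decomposition, are secured by the bottom-row argument.
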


\begin{proof}
By the McCammond--Sulway product decomposition,
\[
  P_F\cong P_1\times\cdots\times P_m,
\]
with coordinatewise order. For each $i$, put
\[
  \bar u_i=(1,\ldots,u_i,\ldots,1)\in P_F.
\]
Since $u$ has zero winding, a reduced factor word for $\bar u_i$ contains no factored translation; hence \cref{thm:factor-word}(iv) gives $\bar u_i\in P_W\cap P_F=P_D$. We identify $u_i$ with this ambient interval element. Its zero winding places it in the bottom row of the diagonal interval \cite[Remark~7.3]{McCammondSulway2017}, so it is horizontal elliptic \cite[Proposition~2.17]{PaoliniSalvetti2021}; consequently it is a Coxeter element of the finite reflection subgroup $W(u_i)$ \cite[Theorem~3.22 and Remark~3.23]{PaoliniSalvetti2021}.

The same bottom-row argument makes $u$ horizontal elliptic, and every reflection below $u$ is horizontal by \cite[Proposition~2.17]{PaoliniSalvetti2021}. Such a reflection belongs to a unique irreducible horizontal component. Since the order on $P_F$ is coordinatewise,
\[
  \{t\in T:t\le_Tu\}=\bigsqcup_{i=1}^m\{t\in T:t\le_Tu_i\}.
\]
Distinct horizontal components are orthogonal, so their reflection subgroups commute, and therefore
\[
  W(u)=W(u_1)\times\cdots\times W(u_m).
\]
Dyer's canonical simple reflections lie in $W(u)\cap T$. The displayed disjoint decomposition and commutativity across distinct horizontal factors therefore show that no edge of the canonical Coxeter graph joins two different $W(u_i)$. Hence Dyer's canonical Coxeter system splits into the canonical systems of the $W(u_i)$, and their irreducible canonical factors, together with their Coxeter elements, are precisely complete finite Coxeter components of $u$.
\end{proof}

\begin{theorem}[Common complete finite Coxeter component]\label{thm:common-core}
Let $W$ have affine rank at least three and let $1\ne b\in G_W^+$. If $b$ has no greatest $P_W$-right divisor, then all maximal $P_W$-right divisors of $b$ share a common nontrivial irreducible complete finite Coxeter component.
\end{theorem}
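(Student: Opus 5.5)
We argue via the completed gcd. Put $d=d_C(b)\in P_C$ as in \eqref{eq:completed-gcd}. If $d\in P_W$, then by \cref{prop:divisor-detection} every $P_W$-right divisor of $b$ right divides $d$, so $d$ is the greatest $P_W$-right divisor, contrary to the hypothesis. Hence $d\notin P_W$. Since $P_C=P_W\cup P_F$, we get $d=(d_1,\ldots,d_m)\in P_F\setminus P_D$; in particular $m>1$, because for $m=1$ we have $P_C=P_W$. The sets $I(d)$ and $Z(d)$ of \eqref{eq:IZ} are then both nonempty. Fix $j\in Z(d)$. By \cref{prop:zero-rigidity}, every maximal $P_W$-right divisor $u$ of $b$ satisfies $u_j=d_j$. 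The $j$th coordinate is therefore a common datum shared by all maximal divisors.

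The common component will be read off from the coordinate $d_j$. By \cref{lem:location}, every maximal divisor $u$ lies in $P_D$ with zero winding. \cref{lem:horizontal-factors} then shows that $W(u)=\prod_iW(u_i)$. It also shows that the irreducible canonical factors $e_{j,a}$ of $W(u_j)=W(d_j)$, with $d_j=e_{j,1}\cdots e_{j,r_j}$, are irreducible complete finite Coxeter components of $u$. This decomposition depends only on $d_j$, not on $u$. So if $d_j\ne1$, any nonidentity $e_{j,1}$ is a common nontrivial irreducible complete finite Coxeter component of all maximal divisors, which proves the theorem.

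The main obstacle is excluding $d_j=1$ for every $j\in Z(d)$; we must find some $j\in Z(d)$ with $d_j\neq1$. The approach is a contrapositive: assume $d_j=1$ for all $j\in Z(d)$ and show that $b$ has a greatest divisor. Under this assumption, every zero-winding $P_W$-divisor $u\preccurlyeq_Rd$ has $u_j=1$ for $j\in Z(d)$. Any such $u$ is a right divisor of $b$, and by \cref{lem:location} every $P_W$-divisor of $b$ is of this form. The $P_W$-divisors of $b$ therefore form the set of tuples $(u_i)_{i\in I(d)}$ with $u_i\preccurlyeq_Rd_i$ of winding zero, with trivial coordinates on $Z(d)$. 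By \cref{thm:factor-word}(iv), every such tuple lies in $P_D$. In each lattice $P_i$, the zero-winding right divisors of $d_i$ should have a greatest element $h_i$, the maximal horizontal right divisor of $d_i$. The most natural candidate is the meet of $d_i$ with the horizontal Coxeter element of component $i$; one should verify this meet using the bottom-row description in \cite[Remark~7.3]{McCammondSulway2017}. The tuple $(h_i)$ is then the greatest $P_W$-right divisor, contradicting the hypothesis.

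If this greatest-horizontal-divisor claim fails, the fallback is to use \cref{lem:diagonal-lifting}. It produces $x\in G_D^+$ with $\gcd_R(x,c)=d$. Comparing windings of $x$ coordinatewise forces $\nu_j(x_j)=\nu_i(x_i)$. This winding constraint should force the coordinates $d_j$ with $j\in Z(d)$ to carry a nontrivial horizontal part whenever the divisor set has several maximal elements.
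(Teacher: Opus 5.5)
Your first two paragraphs match the paper's proof: the reduction to $d=d_C(b)\in P_F\setminus P_D$, the use of \cref{prop:zero-rigidity} to get $u_j=d_j$, and the use of \cref{lem:horizontal-factors} to turn canonical factors of $d_j$ into common components. The decisive step, that $d_j\ne1$ for some $j\in Z(d)$, is not proved, and your main route to it rests on a false claim.

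By \cref{prop:divisor-detection}, \cref{lem:location} and \cref{thm:factor-word}(iv), the $P_W$-right divisors of $b$ are \emph{always} exactly the tuples $(u_i)$ with $u_i\preccurlyeq_R d_i$ and $\nu_i(u_i)=0$. For $j\in Z(d)$ that coordinate set has greatest element $d_j$. So if each $d_i$ had a greatest zero-winding right divisor $h_i$, every $b$ would have a greatest $P_W$-right divisor, and the hypothesis of the theorem could never hold.

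In fact the zero-winding elements of $P_i$ do not form a principal ideal. Take $\beta\in\Xi^c$ in the $i$th component. Then $\beta$ and $\delta-\beta=(c\beta)^{(r_\beta-1)}$ give two horizontal reflections below $c$ whose hyperplanes are parallel. A zero-winding upper bound of both would be horizontal elliptic by \cref{lem:horizontal-factors}, and so would have to fix a point on both hyperplanes, which is impossible. Hence $d_i=c_i$ has no greatest zero-winding right divisor, and there is no single ``horizontal Coxeter element'' of $P_i$ to meet with.

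Your contrapositive also examines only the divisor structure of $d$, and that cannot suffice. For $m=2$, the element $d=(c_1,1)$ is trivial on $Z(d)$, yet its interval divisors have several maximal elements. What excludes $d_j=1$ is that $d$ is the completed gcd of an element of $G_W^+$.

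Your fallback names the right lemma but stops before the step that makes it work, which is the paper's argument. Let $x=fd\in G_D^+$ be as in \cref{lem:diagonal-lifting}. Coordinatewise $x_i=f_id_i$, so $\nu_i(x_i)\ge1$ for $i\in I(d)\ne\varnothing$. Then \cref{lem:diagonal} forces the common winding of $x$ to be at least $1$, hence $x_j\ne1$ for every $j\in Z(d)$. Take a right atom $\rho_j$ of $x_j$ and place it in the $j$th factor. It lies in $P_F\subseteq P_C$, so it right divides both $c$ and $x$ in $G_C^+$, hence right divides $\gcd_R^{G_C^+}(x,c)=d$. By \cref{lem:factor-cone} this divisibility is coordinatewise, so $\rho_j\preccurlyeq_R d_j$ and $d_j\ne1$. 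This gives $d_j\ne1$ for \emph{every} $j\in Z(d)$ directly, with no contrapositive and no greatest-horizontal-divisor claim, and your second paragraph then completes the proof.
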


\begin{proof}
If $d_C(b)\in P_W$, \cref{prop:divisor-detection} makes it the greatest right divisor. Hence $d=d_C(b)\in P_F\setminus P_D$. Choose $j\in Z(d)$. This coordinate is nontrivial. Indeed, \cref{lem:diagonal-lifting} gives $x=fd\in G_D^+$ with completed gcd $d$. The common coordinate winding of $x$ is positive because $I(d)\ne\varnothing$; hence $x_j\ne1$. A right atom $\rho_j$ of $x_j$, embedded in the $j$th factor, right divides both $x$ and $c$, and therefore right divides $d$. Thus $d_j\ne1$.

By \cref{prop:zero-rigidity}, every maximal right divisor $u=(u_i)$ has $u_j=d_j$. The finite Coxeter group $W(d_j)$ and its irreducible canonical decomposition are independent of $u$. Any nonidentity irreducible factor is, by \cref{lem:horizontal-factors}, a complete finite component of every maximal divisor.
\end{proof}

\begin{lemma}[Interval quotients]\label{lem:interval-quotient}
Let $P=[1,c]_T$ be a Coxeter interval and let $u,v\in P$. If $u$ is a right divisor of $v$ in $G(P)^+$, then
\[
  q=vu^{-1}\in P,\qquad \ell_T(v)=\ell_T(q)+\ell_T(u).
\]
Hence
\[
  v=qu
\]
is a defining interval relation in $M(P)$.
\end{lemma}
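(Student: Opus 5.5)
The plan is to pass from the interval group to $W$ through the natural evaluation homomorphism, and then compare reflection length with the degree of \cref{lem:interval-degree}. Every defining relation $[u][v]=[uv]$ of $G(P)$ holds in $W$ by definition. So there is a well-defined homomorphism $\pi:G(P)\to W$ with $\pi([w])=w$ for $w\in P$. Write $v=xu$ with $x\in G(P)^+$, and put $q=\pi(x)=vu^{-1}\in W$.

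The first step is to bound $\ell_T(q)$ from above. By \cref{lem:interval-degree}, $x$ is a product of $\deg x$ degree-one generators, which are reflections in $P$. Applying $\pi$ writes $q$ as a product of $\deg x$ reflections. Since $\deg$ is a homomorphism on $G(P)$ that restricts to reflection length on $P$,
\[
  \ell_T(q)\le \deg x=\deg v-\deg u=\ell_T(v)-\ell_T(u).
\]
Conversely, subadditivity of reflection length gives $\ell_T(v)\le\ell_T(q)+\ell_T(u)$. Hence $\ell_T(v)=\ell_T(q)+\ell_T(u)$.

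The second step shows $q\in P$. Since $q^{-1}v=u$, the length equality is exactly the condition \eqref{eq:absolute-order} for $q\le_T v$. We also have $v\le_T c$, and the absolute order is transitive because it is defined by additivity of a length function: if $\ell_T(c)=\ell_T(v)+\ell_T(v^{-1}c)$ and $\ell_T(v)=\ell_T(q)+\ell_T(q^{-1}v)$, then subadditivity forces $\ell_T(c)=\ell_T(q)+\ell_T(q^{-1}c)$. Therefore $q\in[1,c]_T=P$. It follows that $q,u,qu=v\in P$ with additive reflection length, so $[q][u]=[v]$ is a defining interval relation in $M(P)$.

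I would also record the compatibility in $G(P)$: that relation gives $[q]=[v][u]^{-1}=x$, so $x$ itself is the interval generator $[q]$.

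The only point needing care, and the expected obstacle, is to avoid any use of cancellation or injectivity of $M(P)\to G(P)$, as required by \cref{conv:cancellation}. The argument above works entirely in the group $G(P)$ and in $W$. It uses only the degree homomorphism and the fact that $x$ is positive, so it is a product of atoms; no cancellation in the monoid enters.
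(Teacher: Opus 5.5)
Your proof is correct and follows the paper's argument. Both bound $\ell_T(q)$ above by the degree of a positive word for $vu^{-1}$, get the reverse inequality from subadditivity, and conclude $q\le_T v\le_T c$. Your explicit evaluation map $\pi\colon G(P)\to W$, your check of transitivity, and your remark that $x=[q]$ in $G(P)$ only spell out points the paper leaves implicit.
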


\begin{proof}
Choose a positive $P$-word $Q$ representing $q=vu^{-1}$. Since degree extends to the interval group,
\[
  \deg Q=\ell_T(v)-\ell_T(u).
\]
Reflection-length subadditivity gives
\[
  \ell_T(v)\le \ell_T(q)+\ell_T(u)\le \deg Q+\ell_T(u)=\ell_T(v).
\]
Thus equality holds throughout. Hence
\[
  q\le_Tv\le_Tc,
\]
so $q\in P$ and $\ell_T(v)=\ell_T(q)+\ell_T(u)$. Therefore $v=qu$ is a defining interval relation.
\end{proof}

\begin{criterion}[Common right-divisor injectivity]\label{crit:injectivity}
Let $P=[1,c]_T$ be a finite-height Coxeter interval. Suppose that for every $1\ne b\in G(P)^+$ the maximal $P$-right divisors of $b$ have a common nonidentity $P$-right divisor. Then the natural map
\[
  M(P)\longrightarrow G(P)
\]
is injective.
\end{criterion}

\begin{proof}
Let $\pi:M(P)\twoheadrightarrow G(P)^+$ be the natural map. The common degree is well-defined by \cref{lem:interval-degree}. We induct on the common degree $N$ of two positive words $X,Y$ with $\pi(X)=\pi(Y)=b$. If $N=0$, both words are empty and there is nothing to prove. Assume $N>0$. Write $X=X_0u$ and $Y=Y_0v$, and extend $u,v$ to maximal right divisors $m_u,m_v$ of $b$. Let $e\ne1$ be a common right divisor. By \cref{lem:interval-quotient}, there are interval elements with defining relations
\[
  m_u=a_uu=h_ue,\qquad m_v=a_vv=h_ve.
\]
Since $m_u\preccurlyeq_Rb$, write $b=q_um_u$ in $G(P)^+$ and choose a positive word $Q_u$ for $q_u$. Then
\[
  \pi(X_0)=bu^{-1}=q_ua_u=\pi(Q_ua_u).
\]
Both sides have degree $N-\ell_T(u)<N$, so induction gives $X_0=Q_ua_u$ in $M(P)$. Hence, using only interval relations,
\[
  X=Q_ua_uu=Q_um_u=Q_uh_ue=:R_ue.
\]
Likewise $Y=R_ve$. The prefixes $R_u,R_v$ have common group image $be^{-1}$ and degree $N-\ell_T(e)<N$, so a second induction gives $R_u=R_v$ and hence $X=Y$.
\end{proof}

\begin{theorem}[Affine interval-monoid embedding]\label{thm:interval-embedding}
For every finite-rank irreducible crystallographic affine Coxeter system and every Coxeter element $c$, the natural map
\[
  M_c\hookrightarrow G_W
\]
is injective. Consequently $M_c$ is left- and right-cancellative. In rank at least three,
\[
  M_c\cong G_W^+=G_C^+\cap G_W.
\]
\end{theorem}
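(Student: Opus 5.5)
The plan is to apply \cref{crit:injectivity} to $P=P_W=P_c$. The hypothesis of the criterion has to be checked for every $1\ne b\in G_W^+$, and we split by rank.

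\emph{Rank two.} By \cref{rem:rank-two}, $M(P_c)$ is already quasi-Garside by Digne's criterion, so it embeds in its interval group $G_W$. Every $1\ne b$ then has a greatest $P_c$-right divisor $\gcd_R(b,c)\ne1$, and the hypothesis of the criterion holds trivially. Either way injectivity follows.

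\emph{Rank at least three.} Fix $1\ne b\in G_W^+$. The set of $P_W$-right divisors of $b$ is finite-height and nonempty beyond $1$: any last letter of a positive word for $b$ is a reflection right divisor. So maximal $P_W$-right divisors exist, and they are all nonidentity. We then split into two cases.

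If $b$ has a greatest $P_W$-right divisor $d$, then $d\ne1$ is a common right divisor of all maximal ones (there is only one). If there is no greatest divisor, \cref{thm:common-core} gives a common nontrivial irreducible complete finite Coxeter component $e$. We must verify that $e$ is a $P_W$-right divisor of each maximal $u$ in $G_W^+$. By \cref{def:complete-component} we have $u=e\,u'$ in the component factorization, with the factors of distinct commuting Dyer components. Reflection length is additive across the factors of the direct product $W(u)=W(e)\times W(u')$. Hence $e\le_T u$ with $u=u'e$ a length-additive factorization inside $P_W$, and $u'\in P_W$. This is a defining interval relation, so $e\preccurlyeq_R u$.

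This additivity step is the only point needing care. We can sidestep it by using instead the concrete description in the proof of \cref{thm:common-core}: there every maximal $u$ has $u_j=d_j$, so the embedded coordinate element $\bar d_j\in P_D$ right divides $u$ coordinatewise in $G_F^+$. The quotient is again zero-winding, so it lies in $P_D\subseteq P_W$ by \cref{thm:factor-word}(iv), and $\bar d_j\ne1$ serves as the common divisor. Thus \cref{crit:injectivity} applies and $M_c\to G_W$ is injective.

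\emph{Consequences.} Cancellativity follows immediately, because $M_c$ is a submonoid of a group. For the last identification, injectivity gives $M_c\cong G_W^+$, and \cref{thm:positive-saturation} gives $G_W^+=G_C^+\cap G_W$. The main obstacle is the verification that the common component is a genuine $P_W$-right divisor in the monoid sense, and I would handle it through the coordinate description just given.
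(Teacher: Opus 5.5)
Your proposal is correct and follows the paper's proof. Rank two is handled by \cref{rem:rank-two}; in rank at least three, either a greatest divisor or the common finite core from \cref{thm:common-core} feeds \cref{crit:injectivity}, and cancellativity and $M_c\cong G_W^+=G_C^+\cap G_W$ then follow from the group embedding and \cref{thm:positive-saturation}. The one difference is a point of detail. The paper simply asserts that the common component is a $P_W$-right divisor, whereas you take the whole zero-winding coordinate $\bar d_j\in P_D$ as the common divisor, using \cref{prop:zero-rigidity} and \cref{thm:factor-word}(iv); this is sound and makes your unproved reflection-length additivity sketch unnecessary.
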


\begin{proof}
In rank two this is \cref{rem:rank-two}. Assume rank at least three. Let $1\ne b\in G_W^+$. If it has a greatest $P_W$-right divisor, that divisor is common to all maximal ones. Otherwise \cref{thm:common-core} supplies a common nonidentity complete finite component, which is itself a $P_W$-right divisor. Since $P_W$ has height at most $\ell_T(c)$, \cref{crit:injectivity} applies. The positive-image identification is \cref{thm:positive-saturation}, and cancellation follows from the group embedding.
\end{proof}

\section{Hereditary support realizations and half-orbit rectification}\label{sec:rectification}

We now construct the exceptional-cluster model in which principal fibres become intrinsic. Throughout this section, $\HH$ denotes a basic connected tame hereditary module category realizing an affine Cartan datum, with Coxeter element determined by an exceptional ordering of its simple objects. For the main construction we use the realization fixed in \cref{sec:coxeter-hereditary}; the notation $D,\tau,\tauD$ is used for the corresponding duality and Auslander--Reiten translations.

Let $P_i$ and $I_i$ be the indecomposable projective and injective objects corresponding to $\alpha_i$, and set
\begin{equation}\label{eq:pi-iota}
  \pi_i=\dim P_i=s_n\cdots s_{i+1}(\alpha_i),\qquad
  \iota_i=\dim I_i=s_1\cdots s_{i-1}(\alpha_i).
\end{equation}
Then
\begin{equation}\label{eq:cpi}
  c\pi_i=-\iota_i,\qquad t_{\iota_i}=ct_{\pi_i}c^{-1}.
\end{equation}

\subsection{Real vertices and support representatives}
Reading--Stella classify the $\tau_c$-orbits \cite[Proposition~3.12]{ReadingStella2020}. If $\beta_{i,m}=\tau_c^m(-\alpha_i)$, then
\begin{equation}\label{eq:RS-orbit}
  \beta_{i,m}=\begin{cases}
    c^{m+1}\pi_i,&m\le-1,\\
    -\alpha_i,&m=0,\\
    c^{m-1}\iota_i,&m\ge1.
  \end{cases}
\end{equation}
All other real vertices lie in finitely many finite $\tau_c$-orbits and are positive regular roots.

\begin{proposition}[Positive vertex identity]\label{prop:positive-vertices}
One has
\[
  \Phi_c^{\re}\cap\Phi_+=\{\alpha\in\Phi_+ : t_\alpha\le_Tc\}.
\]
Consequently positive Reading--Stella vertices are in bijection with indecomposable exceptional modules via their dimension vectors.
\end{proposition}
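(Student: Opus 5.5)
The plan is to route both sides of the identity through dimension vectors of exceptional modules in the fixed realization $\HH$ of \cref{lem:hereditary-realization}. Write $\mathcal E=\{\dim X: X\in\HH\text{ indecomposable exceptional}\}$. I will prove
\[
  \{\alpha\in\Phi_+:t_\alpha\le_Tc\}=\mathcal E=\Phi_c^{\re}\cap\Phi_+ .
\]
The bijection statement then follows from the standard fact that over a hereditary algebra an exceptional module is determined up to isomorphism by its dimension vector (Ringel, via Happel's rigidity argument).

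\emph{First equality.} If $X$ is exceptional, \cref{thm:HR}(c) lets me extend $(X)$ to a complete exceptional sequence of length $n$. \cref{thm:HR}(a) then yields a reduced reflection factorization of $c$ containing $t_{\dim X}$. Since any letter of a reduced factorization can be moved to the front by Hurwitz moves, this gives $t_{\dim X}\le_Tc$. Conversely, let $t=t_\alpha\le_Tc$ with $\alpha$ positive. I extend $t$ to a reduced reflection factorization $c=t\,t_2\cdots t_n$. By \cref{thm:HR}(a) it is the image of a complete exceptional sequence $(X_1,\dots,X_n)$ with $t_{\dim X_1}=t$. Because $\dim X_1$ is a positive real root and a reflection determines its root up to sign, $\dim X_1=\alpha$. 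Hence $\alpha\in\mathcal E$.

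\emph{Second equality.} I treat the two kinds of positive Reading--Stella vertices separately.

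\textbf{Transjective vertices.} By \eqref{eq:RS-orbit}, the positive members of the infinite $\tau_c$-orbits are $c^{-k}\pi_i$ and $c^{k}\iota_i$ for $k\ge0$. Using \eqref{eq:pi-iota}, \eqref{eq:cpi} and the identity $\dim\tau^{\mp1}M=c^{\pm1}\dim M$ (which holds for non-projective, respectively non-injective, indecomposables $M$, up to the Hanson--Reading sign convention $E_{c^{-1}}$), these are exactly the dimension vectors of the preprojectives $\tau^{-k}P_i$ and the preinjectives $\tau^kI_i$. All of these modules are exceptional. Conversely, every exceptional preprojective or preinjective module arises this way.

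\textbf{Regular vertices.} The remaining positive vertices lie in finite $\tau_c$-orbits and are positive regular real roots. On the module side, $\tau$ acts on each tube of rank $r$ with period $r$. The exceptional regular modules are exactly the indecomposables of quasi-length $<r$ in non-homogeneous tubes. I would match this with Reading--Stella's explicit description of the finite-orbit real vertices, namely the real roots in the finite-type root subsystems attached to the exceptional tubes, equivalently the roots not exceeding the null root $\delta$ in the relevant sense. The matching uses the fact that $c$ acts on regular dimension vectors as $\tau^{-1}$, and that the real roots supported in a tube of rank $r$ with quasi-length $<r$ are precisely the roots of the associated type $A_{r-1}$ subsystem.

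The main obstacle is this regular step. It requires aligning Reading--Stella's combinatorial criterion for which finite-orbit roots are vertices with the tube geometry, including the sign and orientation conventions ($c$ versus $c^{-1}$, $E_{c^{-1}}$). I would try first to avoid a tube-by-tube check. Instead I would use that Reading--Stella's finite-orbit vertices are characterized as the positive real roots $\beta$ with $\beta\not\ge\delta$ lying in a finite $c$-orbit. Then $\beta\not\ge\delta$ together with regularity is exactly the quasi-length $<r$ condition, since a regular indecomposable of quasi-length $\ge r$ has dimension vector $\ge\delta$, while quasi-length $<r$ forces $\dim\not\ge\delta$.
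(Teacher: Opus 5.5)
Your proposal is correct, but it takes a genuinely different route from the paper. The paper never passes through modules to prove the identity itself. It uses Hanson--Reading's root-theoretic description of the reflections below $c$: all vertical reflections, whose positive roots are $\Phi_+\setminus U^c$, together with the horizontal reflections $t_{\beta^{(p)}}$ for $1\le p<r_\beta$. It matches this term by term with Reading--Stella's description of the positive real vertices as $(\Phi_+\setminus U^c)\cup\Lambda_c^{\re}$. Exceptional modules enter only afterwards, through the rank-one case of \cref{thm:HR}.

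You instead put $\mathcal E$ in the middle. \cref{thm:HR}(a) gives $\{\alpha\in\Phi_+:t_\alpha\le_Tc\}=\mathcal E$, which is essentially the same rank-one input. Auslander--Reiten theory then identifies $\mathcal E$ with the vertex set: the infinite $\tau_c$-orbits are literally the preprojective and preinjective $\tau$-orbits, and the finite ones are the tube modules of quasi-length $<r$.

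Your route buys independence from the classification of reflections below $c$, and it makes the transjective half transparent, in the spirit of \cref{lem:rotation-equiv}. The price is more tame-hereditary input. For the inclusion $\Lambda_c^{\re}\subseteq\mathcal E$ you implicitly need that every positive real root in a finite $c$-orbit (equivalently, of defect zero) is the dimension vector of some indecomposable module. That is Kac's theorem in the Dlab--Ringel form for tame species, and only with it can your ``quasi-length $<r$'' criterion be applied to a root $\beta\not\ge\delta$. You also still need Reading--Stella's description of $\Lambda_c^{\re}$ (their Definition~3.1 and Lemma~3.10), translated into your ``$\beta\not\ge\delta$'' form. That is the same external input the paper relies on.

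Two small corrections. First, with the conventions of \eqref{eq:cpi} and \cref{lem:rotation-equiv}, the identity is $\dim\tau M=c\dim M$, not $\dim\tau M=c^{-1}\dim M$. This convention is what gives $\dim\tau^{-k}P_i=c^{-k}\pi_i$ and $\dim\tau^kI_i=c^k\iota_i$, as you assert; your displayed formula would instead produce $c^{k}\pi_i$. Second, \cref{thm:HR}(c) does not by itself extend $(X)$ to a complete exceptional sequence. Either cite the standard completion result of Crawley-Boevey and Ringel, or apply \cref{thm:HR}(b) directly to the rank-one wide subcategory generated by $X$.
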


\begin{proof}
Let $U^c$ be the Reading--Stella horizontal hyperplane and $\Upsilon^c=\Phi\cap U^c$. Hanson--Reading decompose the reflections below $c$ into all vertical reflections and the horizontal reflections below $c$ \cite[Proposition~6.1]{HansonReading2025}. The positive roots of vertical reflections are exactly $\Phi_+\setminus U^c$. If $\Xi^c$ is the canonical simple system of $\Upsilon^c$ and $\beta\in\Xi^c$ has $c$-cycle length $r_\beta$, the formula following \cite[Proposition~6.1]{HansonReading2025} gives the horizontal positive roots
\[
  \beta^{(p)}=\beta+c\beta+\cdots+c^{p-1}\beta,\qquad1\le p<r_\beta.
\]
These are precisely the roots in the finite regular part $\Lambda_c^{\re}$ of Reading--Stella's definition \cite[Definition~3.1 and Lemma~3.10]{ReadingStella2020}. The final assertion is the rank-one case of \cref{thm:HR}.
\end{proof}

\subsection{Hereditary support realization}
Let $\mathscr S(\HH)$ be the hereditary support complex. Its vertices are the indecomposable exceptional modules together with formal symbols $P_i[1]$; a finite set represented by
\[
  M\oplus\bigoplus_{i\in J}P_i[1]
\]
is a face when $M$ is basic rigid and $\Hom(P_i,M)=0$ for all $i\in J$. The complex is flag.

Let
\[
  \CC_\HH=D^b(\HH)/(\tauD^{-1}[1])
\]
be the hereditary cluster category. The orbit-category calculation of Buan--Marsh--Reineke--Reiten--Todorov gives, for modules $X,Y$ and projective $P$ \cite[Section~1, especially Propositions~1.5--1.7]{BMRRT2006},
\begin{align}
  \Ext^1_{\CC_\HH}(X,Y)&\cong\Ext^1_\HH(X,Y)\oplus D\Ext^1_\HH(Y,X),\label{eq:cluster-ext}\\
  \Hom_{\CC_\HH}(P,Y)&\cong\Hom_\HH(P,Y).\label{eq:cluster-hom}
\end{align}
Heredity leaves only the zeroth and first orbit summands, and derived Serre duality identifies the latter.

\begin{lemma}[Support compatibility as cluster rigidity]\label{lem:support-rigid}
For distinct vertices $U,V$ of $\mathscr S(\HH)$,
\[
  \{U,V\}\in\mathscr S(\HH)\iff \Ext^1_{\CC_\HH}(U,V)=0.
\]
Hence support compatibility is preserved by triangle autoequivalences of $\CC_\HH$.
\end{lemma}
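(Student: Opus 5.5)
The plan is a case analysis on the types of the two vertices $U,V$: both modules, one module and one shifted projective, or both shifted projectives. In each case we compare the face condition defining $\mathscr S(\HH)$ with the vanishing of $\Ext^1_{\CC_\HH}$, computed via \eqref{eq:cluster-ext} and \eqref{eq:cluster-hom}. Distinct vertices are indecomposable, and rigidity of a two-element basic sum reduces to the two mixed Ext-groups, because each vertex is already exceptional (self-Ext vanishes).

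\emph{Both modules.} Here $\{U,V\}$ is a face exactly when $U\oplus V$ is rigid in $\HH$. This means $\Ext^1_\HH(U,V)=0=\Ext^1_\HH(V,U)$. By \eqref{eq:cluster-ext}, $\Ext^1_{\CC_\HH}(U,V)$ is the direct sum of these two groups (one dualized), so it vanishes if and only if both vanish. The resulting condition is symmetric in $U$ and $V$.

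\emph{Module and shifted projective.} Take $U=M$ and $V=P_i[1]$. Then $\Ext^1_{\CC_\HH}(M,P_i[1])=\Hom_{\CC_\HH}(M,P_i[2])$. In the orbit category $P_i[2]\cong\tauD P_i[1]$, using $\tauD^{-1}[1]\cong\mathrm{id}$. Applying \eqref{eq:Serre}, this becomes $\Hom(M,\mathbb S P_i)\cong D\Hom(P_i,M)$. The other orbit summands vanish by heredity, exactly as in the BMRRT calculation. For the opposite order, use $\Ext^1_{\CC_\HH}(P_i[1],M)=\Hom_{\CC_\HH}(P_i,M)$, which equals $\Hom_\HH(P_i,M)$ by \eqref{eq:cluster-hom}. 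The 2-Calabi--Yau property of $\CC_\HH$ then identifies the two orders up to duality. So in either order the vanishing is equivalent to $\Hom(P_i,M)=0$, which is precisely the face condition.

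\emph{Two shifted projectives.} These are always compatible in $\mathscr S(\HH)$. On the cluster side, $\Ext^1_{\CC_\HH}(P_i[1],P_j[1])\cong\Ext^1_{\CC_\HH}(P_i,P_j)$. By \eqref{eq:cluster-ext}, since projectives have no self-extensions in $\HH$, this is $\Ext^1_\HH(P_i,P_j)\oplus D\Ext^1_\HH(P_j,P_i)=0$.

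The main obstacle is the mixed case. One has to check carefully that in $\CC_\HH$ the only orbit summand contributing to $\Hom(M,P_i[2])$ is the Serre-dual term. All summands $\Hom_{D^b}(M,\tauD^{-k}P_i[2+k])$ with $k\ne -1$ must vanish, which follows from heredity and the degree constraints. This mirrors the BMRRT Propositions 1.5--1.7 argument and may simply be cited.

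For the final sentence, note that compatibility of two vertices has been expressed purely as vanishing of $\Ext^1_{\CC_\HH}$ between the corresponding indecomposable objects of $\CC_\HH$. Any triangle autoequivalence $\Theta$ preserves indecomposability and commutes with $[1]$, so $\Ext^1_{\CC_\HH}(\Theta U,\Theta V)\cong\Ext^1_{\CC_\HH}(U,V)$. Hence $\Theta$ preserves compatibility of the images, whenever those images are again vertices.
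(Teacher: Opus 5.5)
Your proof is correct and uses the same case analysis as the paper. Module pairs are handled by \eqref{eq:cluster-ext}, the mixed pair by $\Ext^1_{\CC_\HH}(P_i[1],M)\cong\Hom_\HH(P_i,M)$, and pairs of shifted projectives by the vanishing of $\Ext^1$ between projectives. You also treat the reverse order $\Ext^1_{\CC_\HH}(M,P_i[1])\cong D\Hom_\HH(P_i,M)$ through the orbit sum and Serre duality, which makes explicit a symmetry the paper's proof leaves implicit.
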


\begin{proof}
For module vertices this is \cref{eq:cluster-ext}. If $U=P_i[1]$ and $V=X$ is a module, then
\[
  \Ext^1_{\CC_\HH}(P_i[1],X)\cong\Hom_{\CC_\HH}(P_i,X)\cong\Hom_\HH(P_i,X),
\]
which is the support condition. Two shifted projectives are compatible because the corresponding first extension group between projectives vanishes.
\end{proof}

The autoequivalence induced by $\tauD$ permutes the standard fundamental domain $\ind(\HH)\sqcup\{P_i[1]\}$. Let $\widehat\tau$ denote the induced permutation of representatives:
\begin{equation}\label{eq:support-rotation}
  \widehat\tau(U)=\begin{cases}
    \tau U,&U\text{ nonprojective module},\\
    P_i[1],&U=P_i,\\
    I_i,&U=P_i[1].
  \end{cases}
\end{equation}
Define the root label
\begin{equation}\label{eq:root-label}
  \ell(U)=\begin{cases}
    \dim U,&U\in\ind\Exc(\HH),\\
    -\alpha_i,&U=P_i[1].
  \end{cases}
\end{equation}

\begin{lemma}[Rotation equivariance]\label{lem:rotation-equiv}
The permutation $\widehat\tau$ preserves support compatibility and
\[
  \ell(\widehat\tau U)=\tau_c(\ell(U)).
\]
\end{lemma}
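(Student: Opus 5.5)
The statement has two parts: $\widehat\tau$ preserves support compatibility, and the root labels are intertwined, $\ell(\widehat\tau U)=\tau_c(\ell(U))$. We treat them separately.

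\emph{Compatibility.} By construction, $\widehat\tau$ is the permutation of the fundamental domain $\ind(\HH)\sqcup\{P_i[1]\}$ induced by the triangle autoequivalence of $\CC_\HH$ coming from $\tauD$. Since $\tauD$ commutes with $\tauD^{-1}[1]$, it descends to the orbit category. Triangle autoequivalences preserve $\Ext^1_{\CC_\HH}$-vanishing, so \cref{lem:support-rigid} gives that $\{U,V\}$ is a face if and only if $\{\widehat\tau U,\widehat\tau V\}$ is. This needs two checks.
\begin{enumerate}
\item $\widehat\tau$ really is the induced map on representatives. For nonprojective $U$, $\tauD U=\tau U$. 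For $U=P_i$, $\tauD P_i=I_i[-1]$, which is identified with $\tauD^{-1}I_i$ modulo $\tauD^{-1}[1]$. Since $\tauD^{-1}I_i=P_i[1]$ (by \eqref{eq:Serre}, $\mathbb S P_i=I_i$), this gives $I_i[-1]\simeq P_i[1]$ in $\CC_\HH$.
\item Exceptional modules go to exceptional modules, or to $P_i[1]$.
\end{enumerate}
The case $\tauD(P_i[1])=I_i$ follows similarly.

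\emph{Root labels.} We work case by case with the Coxeter transformation on $K_0$. For a nonprojective indecomposable $U$ we have $\dim\tau U=c\,\dim U$ in the Hanson--Reading convention of \cref{lem:hereditary-realization}. We then compare with the Reading--Stella orbit formula \eqref{eq:RS-orbit}: $\tau_c$ acts on positive roots outside the initial segment by $\beta\mapsto c\beta$.
\begin{itemize}
\item Preprojective modules $\tau^{-k}P_i$ have dimension $c^{-k}\pi_i$, which is $\beta_{i,-k-1}$. Applying $\tau$ gives $c^{-k+1}\pi_i=\beta_{i,-k}$ for $k\ge1$.
\item For $U=P_i$, we get $\widehat\tau P_i=P_i[1]$ with label $-\alpha_i=\beta_{i,0}=\tau_c(\beta_{i,-1})$, consistent since $\beta_{i,-1}=\pi_i$.
\item For $U=P_i[1]$, the label goes to $\iota_i=\beta_{i,1}$.
\item Preinjectives $\tau^kI_i$ have dimension $c^k\iota_i=\beta_{i,k+1}$. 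Applying $\tau$ moves this along the orbit.
\item Regular exceptional modules have dimensions equal to positive regular roots in finite $\tau_c$-orbits. Here $\tau_c\beta=c\beta$ by Reading--Stella's definition on the regular part, and $\dim\tau U=c\dim U$.
\end{itemize}

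\emph{Main obstacle.} The hard step is the sign and direction bookkeeping: showing that $\dim\tau U=c\dim U$, not $c^{-1}\dim U$, matches \eqref{eq:cpi} and \eqref{eq:RS-orbit}, and that Reading--Stella's $\tau_c$ on regular roots is multiplication by $c$, not by $c^{-1}$. We would check this once using $c\pi_i=-\iota_i$. For the regular case we would cite \cite[Definition~3.1, Proposition~3.12]{ReadingStella2020} together with \cref{prop:positive-vertices}.
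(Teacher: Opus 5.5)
Your proposal is correct and follows the paper's proof: compatibility comes from the invariance of support compatibility under triangle autoequivalences of $\CC_\HH$ (\cref{lem:support-rigid}), and the labels come from $\dim\tau U=c\dim U$ away from the splice plus a direct check at $P_i$ and $P_i[1]$ via \eqref{eq:pi-iota} and \eqref{eq:RS-orbit}. Your verification that $\widehat\tau$ is really induced by $\tauD$ (using $\tauD P_i=I_i[-1]\simeq P_i[1]$ in $\CC_\HH$) and your orbit-by-orbit bookkeeping simply make explicit what the paper leaves implicit.
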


\begin{proof}
Compatibility preservation is \cref{lem:support-rigid}. For a nonprojective module, $\dim\tau U=c(\dim U)$ with the present Coxeter convention \cite[Section~2.2]{HansonReading2025}, while $\tau_c$ agrees with the linear action of $c$ away from the splice \cite[Proposition~3.12]{ReadingStella2020}. At the splice, \cref{eq:pi-iota,eq:support-rotation} give
\[
  \ell(\widehat\tau P_i)=-\alpha_i=\tau_c(\pi_i),\qquad
  \ell(\widehat\tau P_i[1])=\iota_i=\tau_c(-\alpha_i).
\]
\end{proof}

Let $\Upsilon^c=\Phi\cap U^c$ and let $\Xi^c$ be its canonical simple system. For $\beta\in\Xi^c$ of cycle length $r_\beta$, write
\[
  \beta^{(p)}=\beta+c\beta+\cdots+c^{p-1}\beta,\qquad1\le p<r_\beta.
\]
The exceptional objects in nonhomogeneous tubes have the standard Euclidean description used in \cite[Remark~8.1 and Section~8]{HansonReading2025}. Let $R_{\beta,p}$ denote the exceptional indecomposable of quasi-length $p$ with $\dim R_{\beta,p}=\beta^{(p)}$.

For such a root put $I(\beta,p)=\{\beta,c\beta,\ldots,c^{p-1}\beta\}$, a proper cyclic interval in the corresponding horizontal component.

\begin{lemma}[Tube interval criterion]\label{lem:tube}
For distinct exceptional indecomposables $R_{\beta,p}$ and $R_{\gamma,q}$ in the same nonhomogeneous tube,
\[
  \Ext^1(R_{\beta,p},R_{\gamma,q})=0=\Ext^1(R_{\gamma,q},R_{\beta,p})
\]
if and only if $I(\beta,p)$ and $I(\gamma,q)$ are nested or spaced.
\end{lemma}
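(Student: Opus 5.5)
The plan is to reduce the lemma to the standard combinatorics of a single nonhomogeneous tube of rank $r=r_\beta$. By Euclidean description (\cite[Remark~8.1 and Section~8]{HansonReading2025}), the exceptional objects of the tube containing $R_{\beta,p}$ form the uniserial objects of quasi-length $p<r$. Each is determined by its quasi-socle, the quasi-simple $R_{\beta,1}$ with dimension vector $\beta$, and its quasi-length $p$. Its quasi-composition factors have dimension vectors $\beta,c\beta,\dots,c^{p-1}\beta$, or the reverse, depending on the orientation convention; in either case this set of dimension vectors is exactly $I(\beta,p)$. I will first fix this dictionary: quasi-simples in the tube correspond to the $r$ roots of the $c$-orbit of $\beta$ in $\Xi^c$, and $\tau$ acts by $c$ (as in \cref{lem:rotation-equiv}). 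The assignment $R_{\beta,p}\mapsto I(\beta,p)$ is then a bijection from exceptional tube objects onto proper cyclic intervals in $\mathbb Z/r$.

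Next I compute $\Ext^1$ inside the tube. By Auslander--Reiten duality in the hereditary category, $\Ext^1(X,Y)\cong D\Hom(Y,\tau X)$. Since tubes are standard uniserial categories, $\Hom(U,V)\ne0$ for uniserials $U,V$ in a tube of rank $r$ with lengths $<r$ exactly when some quotient of $U$ is isomorphic to a subobject of $V$. In interval language, this means a nonempty terminal segment of $I(U)$ coincides with an initial segment of $I(V)$. Note that the lengths are $<r$, so no wrapping ambiguity arises. Applying this to $\Hom(R_{\gamma,q},\tau R_{\beta,p})$, and using that $\tau$ shifts the interval by $c$, the condition $\Ext^1(R_{\beta,p},R_{\gamma,q})\neq0$ becomes an explicit condition: the intervals overlap or are adjacent in one cyclic direction, without either containing the other.

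Then I do the case analysis on the relative position of two distinct proper cyclic intervals $I,J$. There are four cases: they are nested; they are spaced, meaning disjoint and non-adjacent, with a gap on both sides; they are adjacent, meaning disjoint with union an interval; or they cross, meaning they overlap with neither containing the other. I will verify three things. Nested and spaced pairs give vanishing in both directions. An adjacent pair gives a nonzero $\Ext^1$ in exactly one direction, coming from the nonsplit extension whose middle term is the uniserial with interval $I\cup J$. A crossing pair also gives a nonzero $\Ext^1$ in one direction. One subtle case needs care: two intervals whose union is the whole cycle, that is, covering from both ends. Here $I\cup J$ is not proper, but the $\Hom$ criterion still produces a nonzero map, so $\Ext^1\ne0$; I will check this directly, since this is where the restriction $p,q<r$ matters.

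The main obstacle is bookkeeping rather than depth. I have to get the orientation of $I(\beta,p)$ (socle versus top) consistent with the convention $\dim\tau U=c\dim U$, and I have to handle the wraparound cases near length $r-1$ correctly. I would settle the orientation by checking the quasi-length $1$ and $2$ cases against the almost split sequences $0\to\tau R\to E\to R\to0$. The definition of ``spaced'' (disjoint and non-adjacent) must be matched exactly against the adjacency case, where the nonsplit extension exists.
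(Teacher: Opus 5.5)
Your proposal is correct and follows essentially the paper's route. Auslander--Reiten duality converts $\Ext^1$ into $\Hom$ into a $\tau$-shifted object of the same tube, and the uniserial Hom criterion in a standard tube turns this into an explicit interval condition. The nonvanishing configurations are then exactly the crossing and adjacent pairs, whose complement is nested-or-spaced. Your hereditary formula $\Ext^1(X,Y)\cong D\Hom(Y,\tau X)$ replaces the paper's tame-separation step for passing from $\overline{\Hom}$ to $\Hom$.

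One small slip: complementary intervals, which are adjacent at both ends, have nonzero $\Ext^1$ in \emph{both} directions, as already happens for the two quasi-simples of a rank-two tube. So ``exactly one direction'' should read ``at least one,'' which is all the lemma requires.
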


\begin{proof}
Index quasi-simples by $\mathbb Z/r\mathbb Z$ so that $\tau\xi_i=\xi_{i+1}$. On the universal cyclic cover, let $R[a,b]$ have quasi-composition factors $\xi_a,\ldots,\xi_b$, with $0\le b-a<r-1$. The standard tube formula is
\[
  \Hom(R[a,b],R[u,v])\ne0\iff u+kr\le a\le v+kr\le b
\]
for some $k\in\mathbb Z$ \cite[Corollary~X.2.7(a)]{SimsonSkowronski2007}. Auslander--Reiten duality can be read with ordinary Hom here. Indeed, the AR formula quotients $\Hom(R[u,v],\tau R[a,b])$ by maps factoring through injectives. If a factorization $R[u,v]\to I\to\tau R[a,b]$ passes through an injective $I$, its second arrow is zero by tame separation $\Hom(I,R)=0$ for regular $R$ \cite[Sections~2.3--3.1]{Ringel1984}. Hence the quotient agrees with ordinary Hom. Since $\tau R[a,b]=R[a+1,b+1]$,
\[
  \Ext^1(R[a,b],R[u,v])\ne0\iff a+1\le u+kr\le b+1\le v+kr
\]
for some $k$. Interchanging the intervals gives the reverse extension group. These nonvanishing configurations are exactly crossing proper cyclic intervals and cyclically adjacent disjoint intervals. Their complement is nested-or-spaced.
\end{proof}

\begin{proposition}[Finite-orbit regular compatibility]\label{prop:regular-compatibility}
Let $\beta,\gamma\in\Lambda_c^{\re}$ be distinct finite-orbit vertices, and let $M_\beta,M_\gamma$ be their exceptional regular modules. Then
\[
  \beta\sim_c\gamma
  \quad\Longleftrightarrow\quad
  \Ext^1(M_\beta,M_\gamma)=0=\Ext^1(M_\gamma,M_\beta).
\]
The same statement holds for any basic connected tame hereditary realization of an affine Cartan datum.
\end{proposition}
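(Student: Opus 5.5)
The plan is to reduce both sides of the equivalence to the same combinatorial criterion on cyclic intervals, using \cref{lem:tube} for the module side and Reading--Stella's description of compatibility among finite-orbit vertices for the root side.

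\textbf{Step 1: dictionary between roots and modules.} By \cref{prop:positive-vertices}, every finite-orbit vertex in $\Lambda_c^{\re}$ has the form $\beta^{(p)}$ with $\beta\in\Xi^c$ and $1\le p<r_\beta$. Its exceptional module is $M_{\beta^{(p)}}=R_{\beta,p}$. I would then match the irreducible components of $\Upsilon^c$, that is, the $c$-orbits in $\Xi^c$, with the nonhomogeneous tubes of $\HH$. The elements of one $c$-cycle $\{\beta,c\beta,\ldots,c^{r-1}\beta\}$ are the dimension vectors of the quasi-simples of one tube of rank $r$. This uses $\dim\tau U=c(\dim U)$ from the proof of \cref{lem:rotation-equiv}, together with the Euclidean tube description in \cite[Section~8]{HansonReading2025}. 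Under this matching, $I(\beta,p)$ is exactly the set of dimension vectors of the quasi-composition factors of $R_{\beta,p}$. So the interval $R[a,b]$ of \cref{lem:tube} corresponds to $I(\beta,p)$ after a fixed identification of $\mathbb Z/r$ with the $c$-cycle.

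\textbf{Step 2: split into two cases.}
\begin{enumerate}[label=\textup{(\roman*)}]
\item \emph{Different horizontal components.} On the module side, $M_\beta$ and $M_\gamma$ lie in different tubes. Tubes are pairwise Hom-orthogonal, and by AR duality, as in \cref{lem:tube}, with $\tau$ preserving each tube, they are also Ext-orthogonal. On the root side, Reading--Stella compatibility of finite-orbit roots in distinct components of $\Upsilon^c$ holds automatically \cite[Definition~3.1 and Section~5]{ReadingStella2020}. So both sides of the equivalence are true.
\item \emph{Same horizontal component.} Reading--Stella define, or characterize, compatibility of two such roots by the condition that their cyclic intervals are nested or spaced. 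I would use their description of $\Lambda_c^{\re}$ via proper cyclic intervals in \cite[Lemma~3.10]{ReadingStella2020} and the compatibility rule of \cite[Definition~5.3]{ReadingStella2020}. \cref{lem:tube} gives exactly the same criterion for two-sided Ext-vanishing. Distinctness of the vertices matches distinctness of the modules.
\end{enumerate}

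\textbf{Step 3: other realizations.} The final sentence of the proposition, covering any basic connected tame hereditary realization, follows because nothing above used the particular species. Only the tube structure, the formula $\dim\tau=c$ and \cref{lem:tube} entered, and these hold for every such realization.

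\textbf{Main obstacle.} The delicate point is the orientation and indexing conventions in Step 1. One has to check that the cyclic order by which Reading--Stella read intervals agrees with the $\tau$-order $\tau\xi_i=\xi_{i+1}$ used in \cref{lem:tube}, and that ``spaced'' means non-adjacent disjoint intervals in both conventions. Reversing the orientation preserves the nested-or-spaced relation, so only adjacency needs care. I would settle adjacency by comparing the rank-$2$ tube case directly: there, $\beta$ and $c\beta$ must be incompatible, and $\Ext^1(\xi_i,\xi_{i+1})\neq0$ confirms this.
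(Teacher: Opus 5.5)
Your proposal is correct and follows essentially the paper's route: you split into the same-tube case, where \cref{lem:tube} matches two-sided $\Ext^1$-vanishing with Reading--Stella's nested-or-spaced characterization of $c$-compatibility, and the different-tube case, where both sides hold automatically. The differences are minor: the paper cites Hanson--Reading's Lemma~8.3 for the inter-tube Hom/$\Ext^1$-orthogonality that you derive from AR duality, and it cites Reading--Stella's Definition~5.11 and Proposition~5.12 rather than Definition~5.3 and Lemma~3.10 for the interval criterion; your orientation worry is also unnecessary, since ``crossing or adjacent'' is a symmetric relation that is invariant under reversing the cyclic order, adjacency included.
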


\begin{proof}
If $M_\beta$ and $M_\gamma$ lie in the same nonhomogeneous tube, \cref{lem:tube} identifies mutual first-extension vanishing with the nested-or-spaced condition on the associated proper cyclic intervals. Reading--Stella identify exactly this condition with $c$-compatibility \cite[Definition~5.11 and Proposition~5.12]{ReadingStella2020}.

If the modules lie in distinct nonhomogeneous tubes, Hanson--Reading's tame description gives
\[
  \Hom(M_\beta,M_\gamma)=0=\Hom(M_\gamma,M_\beta),
  \qquad
  \Ext^1(M_\beta,M_\gamma)=0=\Ext^1(M_\gamma,M_\beta)
\]
\cite[Lemma~8.3]{HansonReading2025}. Their cyclic supports lie in distinct components of the horizontal canonical simple system, hence are spaced in the sense of Reading--Stella and are compatible by \cite[Definition~5.11 and Proposition~5.12]{ReadingStella2020}. The realization-independence of the exceptional and nonhomogeneous-tube description needed here is recorded in \cite[Remark~8.1]{HansonReading2025}.
\end{proof}

\begin{lemma}[Negative-simple compatibility]\label{lem:negative-simple}
For a positive real vertex $\beta$,
\[
  -\alpha_i\sim_c\beta
  \iff \Hom(P_i,M_\beta)=0
  \iff \{P_i[1],M_\beta\}\in\mathscr S(\HH).
\]
Distinct negative simple roots are compatible, just as the corresponding shifted projectives are support-compatible.
\end{lemma}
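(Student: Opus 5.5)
The plan is to prove the two equivalences separately. The second, $\Hom(P_i,M_\beta)=0\iff\{P_i[1],M_\beta\}\in\mathscr S(\HH)$, holds by the definition of the hereditary support complex: a face $M\oplus P_i[1]$ requires $M$ basic rigid, which is automatic for a single exceptional $M_\beta$, together with $\Hom(P_i,M)=0$. Equivalently, by \cref{lem:support-rigid} and \cref{eq:cluster-hom}, it is the vanishing of $\Ext^1_{\CC_\HH}(P_i[1],M_\beta)\cong\Hom_\HH(P_i,M_\beta)$. The real content is the first equivalence, which relates Reading--Stella compatibility to this module condition.

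For the first equivalence we use the standard Reading--Stella fact that compatibility with a negative simple is read off from supports. By \cite[Definition~5.3]{ReadingStella2020}, the compatibility degree of $-\alpha_i$ with a positive root is given by its $\alpha_i$-coordinate, so $-\alpha_i\sim_c\beta$ holds iff $[\beta:\alpha_i]=0$. If instead one takes the definition via $\tau_c$-invariance and the base case, we use that for every positive $\beta$ the pair $(-\alpha_i,\beta)$ is compatible exactly when $\beta$ lies in the root subsystem generated by $\Pi\setminus\{\alpha_i\}$; this is the parabolic restriction property from \cite[Section~5]{ReadingStella2020}. On the module side we use $\dim_{\End(P_i)}\Hom(P_i,M)=[M:S_i]$, the multiplicity of the simple $S_i$ as a composition factor. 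This holds because $P_i=\Lambda_ce_i$ gives $\Hom(P_i,M)\cong e_iM$. Since $\dim M_\beta=\beta$ by \cref{prop:positive-vertices}, we get $\Hom(P_i,M_\beta)=0\iff[\beta:\alpha_i]=0$, and the two conditions match.

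The main obstacle is to pin down the exact form of Reading--Stella's compatibility with $-\alpha_i$ for \emph{all} positive real vertices, including the finite-orbit regular ones in $\Lambda_c^{\re}$, and not only those in the $\tau_c$-orbits of negative simples. I would first try to quote the support criterion directly; the paper only defines compatibility through $\tau_c$-orbits. Failing that, I would establish it from the rotation, using $\tau_c$-equivariance (\cref{lem:rotation-equiv}) on both sides and the fact that $\widehat\tau$ preserves support compatibility. The point is that compatibility on the support side is $\widehat\tau$-invariant. The same invariance, read in the other direction, transports the pair $\{-\alpha_i,\beta\}$ to the pair $\{\iota_i,\tau_c\beta\}$ and reduces the claim to configurations already handled by the Reading--Stella base definition. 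For regular $\beta$, one can alternatively check directly that the $\alpha_i$-support condition is the one used in \cite[Definition~5.11]{ReadingStella2020}.

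The last sentence of the statement covers a pair of negative simples. Here $-\alpha_i\sim_c-\alpha_j$ holds by Reading--Stella's definition, since the negative simples form a cluster. On the module side, $\Ext^1_{\CC_\HH}(P_i[1],P_j[1])\cong\Ext^1_\HH(P_i,P_j)=0$ by projectivity, as already noted in \cref{lem:support-rigid}.
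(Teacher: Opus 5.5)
Your proposal follows the paper's proof: it combines the Reading--Stella initial condition $(-\alpha_i\Vert\beta)_c=[\beta:\alpha_i]$ (the paper cites it as Equation~(4.3); it holds for every positive real $\beta$, including the finite-orbit regular roots, so your rotation-based fallback is unnecessary) with the observation that $\Hom(P_i,M_\beta)\cong e_iM_\beta$ detects exactly the $i$th coordinate of $\dim M_\beta$. The paper also records that the zero locus of the compatibility degree is symmetric; you do not state this, and it is what justifies reading the symmetric relation $-\alpha_i\sim_c\beta$ off the single degree $(-\alpha_i\Vert\beta)_c$.
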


\begin{proof}
Reading--Stella's initial condition gives $( -\alpha_i\Vert\beta)_c=[\beta:\alpha_i]$ \cite[Equation~(4.3)]{ReadingStella2020}. The zero locus of the compatibility degree is symmetric because swapping the variables changes the degree only by a positive scalar \cite[Proposition~4.12 and the discussion after Definition~5.1]{ReadingStella2020}. The $i$th simple-root coordinate of $\dim M_\beta$ vanishes exactly when $\Hom(P_i,M_\beta)=0$. The final assertions are the corresponding initial-cluster statements.
\end{proof}

\begin{proposition}[Hereditary support realization]\label{prop:support-realization}
The same-root labelling \cref{eq:root-label} induces a simplicial isomorphism
\[
  \ell:\mathscr S(\HH)\xrightarrow{\sim}\DeltaReal.
\]
More generally, the same statement holds for any basic connected tame hereditary realization of an affine root datum, with Coxeter element determined by an exceptional ordering of the simple objects.
\end{proposition}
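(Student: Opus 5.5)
We prove that $\ell$ is a bijection on vertices and then that it preserves and reflects compatibility. Since both complexes are flag ($\mathscr S(\HH)$ by definition, $\DeltaReal$ by \cite[Theorem~5.5]{ReadingStella2020}), it suffices to check that an edge $\{U,V\}$ lies in $\mathscr S(\HH)$ if and only if $\ell(U)\sim_c\ell(V)$.

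\emph{Vertex bijection.} Every vertex $\ell$ hits lies in $\Phi_c^{\re}$. On exceptional modules, $\ell$ is injective with image $\Phi_c^{\re}\cap\Phi_+$ by \cref{prop:positive-vertices}. The shifted projectives $P_i[1]$ map bijectively to the negative simples $-\alpha_i$. These are the only nonpositive almost-positive roots, so $\ell$ is a bijection onto the vertex set of $\DeltaReal$.

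\emph{Edges.} Here we use the rotation $\widehat\tau$. By \cref{lem:rotation-equiv}, $\widehat\tau$ preserves support compatibility and intertwines $\ell$ with $\tau_c$. Reading--Stella's compatibility is $\tau_c$-invariant \cite[Theorem~5.5]{ReadingStella2020}. Hence the class of edges on which the two compatibilities agree is stable under simultaneous rotation. We then split into cases.
\begin{enumerate}
\item Both vertices lie in finite $\tau_c$-orbits. Then both are regular exceptional modules, and \cref{prop:regular-compatibility} gives agreement directly.
\item At least one vertex, say $U$, lies in an infinite orbit $\beta_{i,m}$ of \cref{eq:RS-orbit}. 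Its preimage is $\widehat\tau^{m}(P_i[1])$, which is preprojective, preinjective, or a shifted projective. Rotating the pair by $\widehat\tau^{-m}$ moves $U$ to $P_i[1]$ and $\ell(U)$ to $-\alpha_i$. The other vertex moves to some $V'$ in the fundamental domain. If $V'$ is a module, \cref{lem:negative-simple} gives agreement. If $V'$ is also a shifted projective, both sides are compatible, again by \cref{lem:negative-simple}.
\end{enumerate}

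The main obstacle is checking that the rotation genuinely commutes with $\ell$ along the whole orbit. \Cref{lem:rotation-equiv} handles one step at a time, including the splice $P_i\mapsto P_i[1]\mapsto I_i$, so iteration gives $\ell\circ\widehat\tau^{k}=\tau_c^{k}\circ\ell$ for all $k\in\mathbb Z$. This requires $\widehat\tau$ to be a permutation, which holds because it is induced by an autoequivalence of $\CC_\HH$. We must also confirm that rotating the second vertex never leaves the set of exceptional or shifted-projective representatives. This holds because $\widehat\tau$ permutes the fundamental domain and preserves rigidity, via \cref{lem:support-rigid}.

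The same argument applies verbatim to any basic connected tame hereditary realization. Every ingredient used is realization-independent: \cref{prop:regular-compatibility} as stated there, and \cref{lem:rotation-equiv,lem:negative-simple}, whose proofs use only the Coxeter convention of the exceptional ordering. This gives the ``more generally'' clause.
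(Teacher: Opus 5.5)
Your proposal is correct and follows essentially the same route as the paper's proof. Both arguments get the vertex bijection from \cref{prop:positive-vertices} and use flagness of both complexes. Both reduce any pair with an infinite-orbit label to the negative-simple case by simultaneous rotation, using \cref{lem:rotation-equiv} together with $\tau_c$-invariance of $\sim_c$ and then \cref{lem:negative-simple}. Pairs of finite-orbit labels are handled by \cref{prop:regular-compatibility}.

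Your extra checks only make explicit what the paper leaves implicit: that $\widehat\tau$ permutes the vertex set of $\mathscr S(\HH)$, and that the one-step equivariance iterates to $\ell\circ\widehat\tau^{k}=\tau_c^{k}\circ\ell$ for all $k\in\mathbb Z$.
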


\begin{proof}
The map $\ell$ is a vertex bijection by \cref{prop:positive-vertices}. If at least one label belongs to an infinite $\tau_c$-orbit, a simultaneous power of $\tau_c$ sends it to a negative simple root \cite[Proposition~3.12(4)]{ReadingStella2020}. Reading--Stella compatibility is $\tau_c$-invariant \cite[Equation~(4.8) and Proposition~5.4(1)]{ReadingStella2020}; \cref{lem:rotation-equiv} makes the same rotation on support vertices, so the comparison reduces to \cref{lem:negative-simple}.

If both labels lie in finite orbits, they are regular roots in $\Lambda_c^{\re}$, and \cref{prop:regular-compatibility} gives the required equivalence. Thus $\ell$ preserves and reflects every edge, and both complexes are flag. The general realization statement follows from the same tame-hereditary inputs, together with \cite[Remark~8.1]{HansonReading2025} for realization-independence of the exceptional regular part.
\end{proof}

\subsection{Half-orbit rectification}

\begin{definition}[Half-orbit relabelling]\label{def:eta}
On an infinite orbit define
\[
  \eta_c(\beta_{i,m})=\begin{cases}
    \beta_{i,m},&m<0,\\
    \beta_{i,m+1},&m\ge0,
  \end{cases}
\]
and let $\eta_c$ be the identity on finite regular orbits. For a real vertex $\beta$, let $X_\beta$ be the exceptional module with $\dim X_\beta=\eta_c(\beta)$ and put $\lambda_c(\beta)=t_{\eta_c(\beta)}$.
\end{definition}

On each infinite orbit, \cref{def:eta} removes the negative-simple splice in \cref{eq:RS-orbit} and maps bijectively onto the two positive tails, while finite regular orbits are fixed. Hence $\eta_c$ is a bijection from $V(\DeltaReal)$ onto $\Phi_c^{\re}\cap\Phi_+$, the positive real Schur-root set by \cref{prop:positive-vertices}, and therefore
\[
  \lambda_c:V(\DeltaReal)\xrightarrow{\sim}\{t\in T:t\le_Tc\}.
\]
Using \cref{eq:cpi}, the reflection labels are
\[
  \lambda_c(\beta)=\begin{cases}
    t_\beta,&\beta\text{ preprojective or regular},\\
    ct_\beta c^{-1},&\beta\text{ positive preinjective},\\
    ct_{\pi_i}c^{-1}=t_{\iota_i},&\beta=-\alpha_i.
  \end{cases}
\]

\begin{remark}
The map $\eta_c$ is only a relabelling of vertices. It is not a functor, a categorical autoequivalence or a mutation operation, and no simpliciality statement about $\eta_c$ itself is used.
\end{remark}

\subsection{Defect and the rectified exceptional cluster complex}
Let $\delta$ be the primitive positive null root. Define
\begin{equation}\label{eq:defect}
  \partial(X)=E_{c^{-1}}(\delta,\dim X).
\end{equation}
Then
\begin{equation}\label{eq:tame-trichotomy}
\begin{aligned}
  \partial(X)<0&\iff X\text{ is preprojective},\\
  \partial(X)=0&\iff X\text{ is regular},\\
  \partial(X)>0&\iff X\text{ is preinjective}.
\end{aligned}
\end{equation}
This normalization agrees with \cite[Section~8.1]{HansonReading2025}.

Define the support section
\begin{equation}\label{eq:support-section}
  \rho_c(X)=\begin{cases}
    X,&\partial(X)\le0,\\
    \tau^{-1}X,&\partial(X)>0\text{ and }X\text{ is not injective},\\
    P_i[1],&X=I_i.
  \end{cases}
\end{equation}

\begin{lemma}[Support-section identity]\label{lem:support-section}
For every indecomposable exceptional module $X$,
\[
  \ell(\rho_c(X))=\eta_c^{-1}(\dim X).
\]
Thus $\rho_c$ is a bijection from ordinary exceptional modules to support vertices.
\end{lemma}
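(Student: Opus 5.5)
The plan is a case analysis along the tame trichotomy \cref{eq:tame-trichotomy}, using the explicit orbit description \cref{eq:RS-orbit}, the rotation equivariance of \cref{lem:rotation-equiv}, and the root label \cref{eq:root-label}. Recall that $\eta_c^{-1}$ is the identity on finite regular orbits and on the preprojective tails $\beta_{i,m}$ with $m<0$. On an infinite orbit, $\eta_c^{-1}$ sends $\beta_{i,m+1}$ to $\beta_{i,m}$ for $m\ge0$. Since $\tau_c$ acts on $\beta_{i,\bullet}$ by $m\mapsto m+1$, this means $\eta_c^{-1}(\beta_{i,m+1})=\tau_c^{-1}(\beta_{i,m+1})$ on the positive preinjective tail (indices $\ge1$).

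First I treat $\partial(X)\le0$. By \cref{eq:tame-trichotomy}, $X$ is then preprojective or regular.
\begin{enumerate}
\item If $X$ is regular exceptional, then $\dim X$ is a finite-orbit vertex, as in \cref{prop:positive-vertices} and the tube description. Hence $\eta_c^{-1}(\dim X)=\dim X=\ell(X)=\ell(\rho_c(X))$.
\item If $X$ is preprojective, then $X=\tau^{-k}P_i$ for some $k\ge0$, so $\dim X=c^{-k}\pi_i=\beta_{i,-k-1}$, which has negative index. This uses the identity $\dim\tau^{-1}U=c^{-1}\dim U$ from \cref{lem:rotation-equiv}. Thus $\eta_c^{-1}$ fixes it, and again $\ell(\rho_c(X))=\dim X$.
\end{enumerate}

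Next I treat $\partial(X)>0$, where $X=\tau^kI_i$ for some $k\ge0$ and $\dim X=c^k\iota_i=\beta_{i,k+1}$. Then $\eta_c^{-1}(\dim X)=\beta_{i,k}$.
\begin{enumerate}
\item If $k\ge1$, then $X$ is not injective and $\rho_c(X)=\tau^{-1}X=\tau^{k-1}I_i$. This module has dimension vector $c^{k-1}\iota_i=\beta_{i,k}$, as required.
\item If $k=0$, then $X=I_i$ and $\rho_c(X)=P_i[1]$, whose label is $-\alpha_i=\beta_{i,0}$, again as required.
\end{enumerate}
Equivalently, both subcases say $\ell(\rho_c(X))=\ell(\widehat\tau^{-1}X)=\tau_c^{-1}(\dim X)$ by \cref{lem:rotation-equiv}, since $\widehat\tau(P_i[1])=I_i$.

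For the bijectivity claim, I compose maps already known to be bijective. The map $X\mapsto\dim X$ is a bijection from exceptional indecomposables onto $\Phi_c^{\re}\cap\Phi_+$ by \cref{prop:positive-vertices}. The map $\eta_c^{-1}$ is a bijection from this set onto $V(\DeltaReal)$, as noted after \cref{def:eta}. Finally $\ell^{-1}$ is a bijection onto the support vertices by \cref{prop:support-realization}. The identity just proved shows $\rho_c=\ell^{-1}\circ\eta_c^{-1}\circ\dim$, so $\rho_c$ is a bijection.

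The main obstacle is the bookkeeping that the indexing in \cref{eq:RS-orbit} matches the AR-translates on the nose. Concretely, I must check that $\tau^{-k}P_i$ corresponds to index $-k-1$ and $\tau^kI_i$ to index $k+1$. This relies on the convention $\dim\tau U=c\dim U$ for nonprojective $U$ and on \cref{eq:cpi}. I will also use that every non-regular exceptional module lies in a single $\tau$-orbit of some $P_i$ or $I_i$, which is standard for tame hereditary categories.
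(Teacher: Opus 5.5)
Your proof is correct and follows the same route as the paper. You split cases along the tame trichotomy: $\eta_c^{-1}$ is the identity on regular and preprojective dimension vectors, it is one backward $\tau^{-1}$-step on non-injective preinjectives, and it sends $\dim I_i=\iota_i$ to $-\alpha_i=\ell(P_i[1])$. Your explicit index bookkeeping ($\tau^{-k}P_i\leftrightarrow\beta_{i,-k-1}$, $\tau^kI_i\leftrightarrow\beta_{i,k+1}$) and the factorization $\rho_c=\ell^{-1}\circ\eta_c^{-1}\circ\dim$ for bijectivity make explicit what the paper's short proof leaves implicit.
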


\begin{proof}
On finite regular orbits and the preprojective half of an infinite orbit, $\eta_c$ is the identity. On the positive preinjective half, $\eta_c^{-1}$ moves one step backwards, realized by $\tau^{-1}$. At the injective boundary, $\eta_c(-\alpha_i)=\iota_i=\dim I_i$ and $-\alpha_i$ is represented by $P_i[1]$.
\end{proof}

\begin{lemma}[Defect-dependent compatibility]\label{lem:defect-compatibility}
Let $X,Y$ be indecomposable exceptional modules.
\begin{enumerate}[label=\textup{(\roman*)}]
\item If $\partial(X),\partial(Y)\le0$, or if both are positive, then
\[
  \{\rho_c(X),\rho_c(Y)\}\in\mathscr S(\HH)
  \iff \Ext^1(X,Y)=0=\Ext^1(Y,X).
\]
\item If $\partial(N)\le0<\partial(I)$, then
\[
  \{\rho_c(N),\rho_c(I)\}\in\mathscr S(\HH)
  \iff \Hom(N,I)=0.
\]
\end{enumerate}
\end{lemma}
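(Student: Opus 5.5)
The plan is to reduce everything to \cref{lem:support-rigid}, which identifies support compatibility with vanishing of $\Ext^1_{\CC_\HH}$. Since $\rho_c(X)$ is obtained from $X$ by a power of $\tauD^{-1}$ in each case, I will then transport the cluster-category Ext back along $\tauD$, which is an autoequivalence of $\CC_\HH$. Throughout, the basic input is \cref{eq:cluster-ext} together with Serre duality \cref{eq:Serre}. Concretely, in $\CC_\HH$ one has $\tauD^{-1}X\cong X[-1]\cdot$(shift identifications), and for a module $X$ we have $\Ext^1_{\CC_\HH}(U,V)\cong D\Ext^1_{\CC_\HH}(V,U)$ (2-Calabi--Yau). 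In particular the vanishing condition is symmetric.

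\textbf{Case (i), both defects $\le0$.} Here $\rho_c$ is the identity, and \cref{eq:cluster-ext} gives the claim directly.

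\textbf{Case (i), both defects positive.} Apply the autoequivalence $\tauD$ to the pair $(\rho_c(X),\rho_c(Y))$. By \cref{eq:support-rotation}, $\widehat\tau\rho_c(X)=X$ in both subcases: $\tau\tau^{-1}X=X$ when $X$ is not injective, and $\widehat\tau(P_i[1])=I_i$ when $X=I_i$. \Cref{lem:rotation-equiv} says $\widehat\tau$ preserves support compatibility, so $\{\rho_c(X),\rho_c(Y)\}$ is a face iff $\{X,Y\}$ is a face. We are then back to the module statement \cref{eq:cluster-ext}.

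\textbf{Case (ii), $\partial(N)\le0<\partial(I)$.} Again apply $\widehat\tau$, which sends the pair to $(\widehat\tau N, I)$, or alternatively compute directly. I would compute $\Ext^1_{\CC_\HH}(N,\tauD^{-1}I)$, where $\tauD^{-1}I$ means $\tau^{-1}I$ if $I$ is not injective and $P_i[1]$ if $I=I_i$.

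In the non-injective case, \cref{eq:cluster-ext} gives the sum $\Ext^1_\HH(N,\tau^{-1}I)\oplus D\Ext^1_\HH(\tau^{-1}I,N)$. AR duality turns the second summand into $\overline{\Hom}(N,I)$ and then $\Hom(N,I)$. This step uses the hereditary formula $\Ext^1(A,B)\cong D\overline{\Hom}(B,\tau A)$, and maps from a preprojective or regular $N$ into preinjective $I$ factoring through injectives need care. Rather, it is cleaner to use $\Ext^1(\tau^{-1}I,N)\cong D\Hom(N,I)$ modulo maps factoring through projectives into $N$... The first summand $\Ext^1_\HH(N,\tau^{-1}I)$ must be shown to vanish automatically. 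Note $\Ext^1(N,\tau^{-1}I)\cong D\Hom(\tau^{-1}I,\tau N)$, and tame separation gives no nonzero maps from preinjective to preprojective/regular modules \cite[Sections~2.3--3.1]{Ringel1984} (here $\tau N$ is regular or preprojective, or $0$ if $N$ is projective). Similarly, in the AR formula for the other summand, $\Ext^1(\tau^{-1}I,N)\cong D\overline{\Hom}(N,I)$ modulo injectives; this is read with ordinary Hom because of the convention $\underline{\Hom}$ vs. $\overline{\Hom}$. I would use the form $\Ext^1(A,B)\cong D\underline{\Hom}(\tau^{-1}B,A)$: with $A=\tau^{-1}I$, $B=N$ this reads $D\underline\Hom(\tau^{-1}N,\tau^{-1}I)$, which is not what is wanted. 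Instead I use $D\overline{\Hom}(B,\tau A)=D\overline{\Hom}(N,I)$. A map $N\to J\to I$ through an injective is not automatically zero, so this needs an argument.

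I expect this to be the main obstacle. The fix I would try is to avoid $\overline{\Hom}$ entirely by working in $D^b(\HH)$ with Serre duality: $\Ext^1_{\CC}(\tauD^{-1}I,N)$ contains $\Hom_{D^b}(\tauD^{-1}I,N[1])\cong D\Hom_{D^b}(N,\mathbb S\tauD^{-1}I[-1])=D\Hom(N,I)$ exactly, with no stable quotient. The other orbit summand is then shown to vanish by tame separation as above.

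In the injective case $I=I_i$, \cref{lem:support-rigid} gives compatibility iff $\Hom(P_i,N)=0$, and \cref{eq:proj-inj-duality} identifies this with $D\Hom(N,I_i)=0$, as required.
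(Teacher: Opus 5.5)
Your proposal is correct and follows the paper's proof. The same-side cases reduce, via \cref{lem:support-rigid} and invariance under $\tauD$ (your use of $\widehat\tau$ and \cref{lem:rotation-equiv} is the same mechanism), to \cref{eq:cluster-ext}. In the mixed case, tame separation kills $\Ext^1(N,\tau^{-1}I)$, derived Serre duality gives $\Ext^1(\tau^{-1}I,N)\cong D\Hom(N,I)$, and \cref{eq:proj-inj-duality} handles the injective boundary.

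Your worry about $\overline{\Hom}$ was unfounded, though harmless since you switched to the Serre-duality route: in a hereditary category a quotient of an injective is injective. So any map from an injective to an indecomposable noninjective $I$ is zero.
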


\begin{proof}
If both defects are nonpositive, the support representatives are the modules themselves. If both are positive, $\rho_c(X)$ and $\rho_c(Y)$ represent $\tau_{\CC_\HH}^{-1}X$ and $\tau_{\CC_\HH}^{-1}Y$. By \cref{lem:support-rigid} and invariance under the triangle autoequivalence, their compatibility is equivalent to $\Ext^1_{\CC_\HH}(X,Y)=0$, which by \cref{eq:cluster-ext} is mutual ordinary $\Ext^1$-vanishing. This includes injective boundary objects.

Suppose $\partial(N)\le0<\partial(I)$. If $I$ is not injective, put $J=\tau^{-1}I$, so $J$ is preinjective and $\rho_c(I)=J$. Tame separation gives $\Ext^1(N,J)=0$. Derived Serre duality gives
\[
  D\Hom(N,I)\cong \Hom_{D^b(\HH)}(I,\tauD N[1])\cong\Ext^1(J,N).
\]
Thus the two module representatives are support-compatible exactly when $\Hom(N,I)=0$. If $I=I_i$, support compatibility with $P_i[1]$ is $\Hom(P_i,N)=0$, which is equivalent to $D\Hom(N,I_i)=0$ by \cref{eq:proj-inj-duality}.
\end{proof}

\subsection{Rectified exceptional reconstruction}

\begin{definition}[Rectified exceptional $c$-cluster complex]\label{def:rectified}
The rectified exceptional $c$-cluster complex $\Delta_c^{\mathrm{exc}}(\HH)$ is the flag complex on $\ind\Exc(\HH)$ whose edges are prescribed by \cref{lem:defect-compatibility}: same-side pairs are adjacent exactly when both $\Ext^1$ groups vanish, while a mixed pair $\partial(N)\le0<\partial(I)$ is adjacent exactly when $\Hom(N,I)=0$.
\end{definition}

In the algebraic and fibre-theoretic sections put $K_c:=\Delta_c^{\mathrm{exc}}(\HH)$.

\begin{theorem}[Rectified exceptional reconstruction]\label{thm:rectified}
The vertex map
\[
  X\longmapsto\eta_c^{-1}(\dim X)
\]
induces a simplicial isomorphism
\[
  \Delta_c^{\mathrm{exc}}(\HH)\xrightarrow{\sim}\Delta_c^{\re}(\Phi).
\]
More generally, for every basic connected tame hereditary realization $\mathcal A$ with affine root system $\Psi$ and Coxeter element $e$ determined by an exceptional ordering of the simple objects,
\[
  \Delta_e^{\mathrm{exc}}(\mathcal A)\xrightarrow{\sim}\Delta_e^{\re}(\Psi).
\]
\end{theorem}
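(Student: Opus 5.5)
The plan is to realize the vertex map as the composite
\[
  X\longmapsto\rho_c(X)\longmapsto\ell(\rho_c(X)),
\]
and then use the hereditary support realization to pass from support vertices to Reading--Stella vertices.

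First, vertex bijectivity. By \cref{lem:support-section}, $\ell(\rho_c(X))=\eta_c^{-1}(\dim X)$. Moreover $\rho_c$ is a bijection from $\ind\Exc(\HH)$ onto the vertex set of $\mathscr S(\HH)$, and \cref{prop:support-realization} makes $\ell$ a vertex bijection onto $V(\DeltaReal)$. Hence the displayed map is a bijection
\[
  \ind\Exc(\HH)\to V(\DeltaReal).
\]
This is also consistent with the fact, noted after \cref{def:eta}, that $\eta_c$ is a bijection onto the positive real Schur roots, which correspond to exceptional dimension vectors by \cref{prop:positive-vertices}.

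Second, edges. For distinct $X,Y$, I would show that $\{X,Y\}$ is an edge of $\Delta_c^{\mathrm{exc}}(\HH)$ if and only if $\{\rho_c(X),\rho_c(Y)\}\in\mathscr S(\HH)$. I would split into cases by the signs of the defects $\partial(X),\partial(Y)$, using \eqref{eq:tame-trichotomy}.
\begin{enumerate}[label=\textup{(\roman*)}]
\item Both defects nonpositive, or both positive: \cref{lem:defect-compatibility}(i) gives exactly the same-side condition (mutual $\Ext^1$-vanishing) of \cref{def:rectified}.
\item Mixed pair $\partial(N)\le0<\partial(I)$: \cref{lem:defect-compatibility}(ii) gives exactly the condition $\Hom(N,I)=0$ of \cref{def:rectified}.
\end{enumerate}
So $\rho_c$ preserves and reflects edges. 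Composing with the edge-preserving and edge-reflecting bijection $\ell$ of \cref{prop:support-realization}, the vertex map preserves and reflects edges. Both $\Delta_c^{\mathrm{exc}}(\HH)$ (flag by definition) and $\DeltaReal$ (flag by Reading--Stella) are flag complexes, so an edge-preserving and edge-reflecting vertex bijection is a simplicial isomorphism.

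For the general realization $\mathcal A$, I would rerun the same argument verbatim. Every ingredient is stated for arbitrary basic connected tame hereditary realizations with Coxeter element given by an exceptional ordering of simples: \cref{prop:support-realization} and \cref{prop:regular-compatibility} in their general form, and \cref{lem:support-section}, \cref{lem:defect-compatibility} and \eqref{eq:tame-trichotomy}, which use only tame hereditary structure and the Hanson--Reading defect normalization. I would remark that nothing in those proofs depends on the particular choice fixed in \eqref{eq:fixed-realization}.

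The main obstacle is the boundary bookkeeping in the mixed and injective cases. One must check that the injective $I_i$ is treated consistently: $\rho_c(I_i)=P_i[1]$, and its label is $\eta_c^{-1}(\iota_i)=-\alpha_i$. One must also check that the defect-positive modules are exactly those on which $\eta_c$ shifts, with regular modules having defect $0$ and fixed by $\eta_c$. This amounts to verifying that the case split of $\rho_c$ in \eqref{eq:support-section} matches the case split of \cref{def:eta}, which is precisely the content of \cref{lem:support-section}. Since that lemma is already available, I expect the remaining work to be routine.
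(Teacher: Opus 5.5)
Your proposal is correct and follows essentially the same route as the paper. You factor the vertex map as $\ell\circ\rho_c$, take vertex bijectivity and the identity $\ell(\rho_c(X))=\eta_c^{-1}(\dim X)$ from \cref{lem:support-section}, get edge preservation and reflection from \cref{lem:defect-compatibility}, conclude by flagness and \cref{prop:support-realization}, and obtain the general case for $\mathcal A$ by rerunning the same lemmas, exactly as the paper does.
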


\begin{proof}
By \cref{lem:defect-compatibility}, the support section $\rho_c$ preserves and reflects edges; it is a vertex bijection by \cref{lem:support-section}, and both complexes are flag. Hence it is simplicial. \Cref{prop:support-realization} identifies the support complex with $\Delta_c^{\re}(\Phi)$, and \cref{lem:support-section} gives $\ell(\rho_c(X))=\eta_c^{-1}(\dim X)$ for every exceptional module $X$. The proofs of \cref{prop:support-realization,lem:support-section,lem:defect-compatibility} were formulated for a basic connected tame hereditary realization; applying those statements to $\mathcal A$ gives the general assertion.
\end{proof}

\begin{remark}[Realization invariance]
If two tame hereditary realizations have isomorphic oriented generalized Cartan lattices carrying ordered simple basis to ordered simple basis, \cref{thm:rectified} identifies both rectified exceptional complexes with the same Reading--Stella complex. Thus realization invariance is a consequence, not an input, of the direct reconstruction.
\end{remark}

\section{Exceptional wide subcategories, intrinsic face weights, and principal fibres}\label{sec:weights}

\subsection{Exceptional ordering of rectified faces}
We use tame separation
\begin{equation}\label{eq:tame-separation}
  \Hom(I,R)=0=\Hom(I,P)
\end{equation}
for preinjective $I$, regular $R$, and preprojective $P$; see \cite[Sections~2.3--3.1]{Ringel1984}. Auslander--Reiten duality gives
\begin{equation}\label{eq:tame-ext}
  \Ext^1(P,I)=0=\Ext^1(R,I).
\end{equation}

\begin{lemma}[Exceptional ordering of faces]\label{lem:face-exceptional-order}
Every face of the rectified exceptional complex can be ordered into an exceptional sequence. More precisely, positive-defect vertices may be placed before nonpositive-defect vertices.
\end{lemma}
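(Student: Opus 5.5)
The plan is to split a face $F$ of $\Delta_c^{\mathrm{exc}}(\HH)$ as $F=F_+\sqcup F_{\le0}$ by the sign of the defect $\partial$. I will order each part internally as an exceptional sequence, and then check that concatenating with $F_+$ first and $F_{\le0}$ second gives an exceptional sequence. By the convention of the paper, $(X_1,\ldots,X_r)$ is exceptional when $\Hom(X_j,X_i)=0=\Ext^1(X_j,X_i)$ for $j>i$. So for the concatenation I need, for every $N\in F_{\le0}$ and $I\in F_+$,
\[
  \Hom(N,I)=0,\qquad \Ext^1(N,I)=0.
\]
The first vanishing is exactly the mixed adjacency condition of \cref{def:rectified}. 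For the second, $I$ has positive defect and so is preinjective by \cref{eq:tame-trichotomy}, while $N$ is preprojective or regular. Hence $\Ext^1(N,I)=0$ by \cref{eq:tame-ext}. The cross terms therefore need nothing beyond the definition and tame separation.

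Within each part, the edges of \cref{def:rectified} say that all pairs have mutually vanishing $\Ext^1$. So $\bigoplus_{X\in F_+}X$ and $\bigoplus_{X\in F_{\le0}}X$ are basic rigid modules. The standard fact for hereditary algebras (Happel--Ringel) is that the indecomposable summands of a rigid module admit an ordering with $\Hom(X_j,X_i)=0$ for $j>i$. The reason is that the relation ``$\Hom(X,Y)\ne0$, $X\ne Y$'' among summands of a partial tilting module generates no oriented cycles. Since $\Ext^1$ already vanishes in both directions, any such ordering is an exceptional sequence.

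I will make the acyclicity argument explicit, because it is the main technical point.
\begin{itemize}
\item Suppose there were a cycle of nonzero nonisomorphisms $X_1\to X_2\to\cdots\to X_1$ among pairwise $\Ext$-orthogonal exceptional indecomposables.
\item Since $\Ext^1(X_i,X_i)=0$, Happel--Ringel gives that any nonzero map between Ext-orthogonal indecomposables in a hereditary category is mono or epi.
\item The Happel--Ringel lemma, that $\Ext^1(Y,X)=0$ for indecomposable $X,Y$ forces every nonzero $X\to Y$ to be mono or epi, together with a dimension count, rules out the cycle.
\end{itemize}
I expect to cite this as known, for example via \cite{HansonReading2025} or Ringel's partial-tilting results, rather than re-prove it.

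As a more concrete alternative for the $F_{\le0}$ part, I can sort the preprojective vertices by AR position using the directedness of the preprojective component, and place the regular ones after them, since $\Hom(R,P)=0$ for $R$ regular and $P$ preprojective. The $F_+$ part is handled symmetrically inside the preinjective component. Regular vertices lying in several tubes are Hom-orthogonal across tubes by \cite[Lemma~8.3]{HansonReading2025}, so the only remaining check is within a single tube. There, $\Ext$-compatibility of proper cyclic intervals forces nested-or-spaced supports by \cref{lem:tube}, and nested intervals can be ordered by quasi-length, or by the inclusion direction dictated by the Hom formula of \cite[Corollary~X.2.7]{SimsonSkowronski2007}.

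The obstacle I anticipate is precisely this within-tube ordering, if the general rigid-module acyclicity is not invoked. If it becomes necessary, I will order nested intervals so that whenever a nonzero Hom exists it points forward, and verify this with the displayed tube formula.
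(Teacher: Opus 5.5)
Your proposal is correct and follows the paper's own proof. The paper likewise splits $F=F_+\sqcup F_{\le0}$, orders each $\Ext^1$-orthogonal part as the summands of a partial tilting module (it cites Buan--Reiten--Thomas, Lemma~2.2(a)(i), where you cite the Happel--Ringel acyclicity argument, which your mono-or-epi sketch does establish), and checks the cross terms by mixed adjacency together with \cref{eq:tame-ext}.

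Your fallback tube ordering ``by quasi-length'' would fail as stated, because nested intervals sharing a left endpoint and nested intervals sharing a right endpoint carry nonzero Homs in opposite size directions. That fallback is unnecessary once the general acyclicity is cited.
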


\begin{proof}
Write $F=F_+\sqcup F_{\le0}$. Within each part, rectified compatibility gives mutual $\Ext^1$-orthogonality. Thus the direct sums are partial tilting objects, and the indecomposable summands in each part admit exceptional orderings by \cite[Lemma~2.2(a)(i)]{BuanReitenThomas2011}. Concatenate an exceptional ordering of $F_+$ before one of $F_{\le0}$. For $I\in F_+$ and $N\in F_{\le0}$, mixed compatibility gives $\Hom(N,I)=0$, while \cref{eq:tame-ext} gives $\Ext^1(N,I)=0$, exactly the cross-block conditions for an exceptional sequence.
\end{proof}

\subsection{Intrinsic face weights and principal fibres}

\begin{definition}[Intrinsic exceptional face weight]\label{def:weight}
For $F\in K_c$, define
\begin{equation}\label{eq:face-weight}
  \WW_F=\wide\langle X:X\in F\rangle,\qquad
  w(F)=\cox(\WW_F)\in P_c,\qquad
  \omega(F)=w(F)\in M_c.
\end{equation}
\end{definition}
The element $w(F)$ is independent of the exceptional ordering by \cref{thm:HR}.

\begin{theorem}[Rectified face map]\label{thm:face-map}
For faces $G\subseteq F$ of $K_c$,
\[
  w(\varnothing)=1,\qquad \ell_T(w(F))=|F|,\qquad w(G)\le_Tw(F).
\]
For every facet $C$ of $K_c$, $\WW_C=\HH$ and $w(C)=c$.
\end{theorem}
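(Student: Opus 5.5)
We derive all four assertions from \cref{lem:face-exceptional-order} combined with the Hanson--Reading correspondences of \cref{thm:HR}. The empty face is immediate: $\WW_\varnothing=\wide\langle\varnothing\rangle=0$, whose complete exceptional sequence is empty, so \cref{eq:cox-wide} gives $w(\varnothing)=1$.

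\textbf{Rank of a face.} Let $F$ be a nonempty face. By \cref{lem:face-exceptional-order} we order its vertices as an exceptional sequence $(X_1,\dots,X_r)$ with $r=|F|$, placing positive-defect vertices first. The vertices are pairwise nonisomorphic indecomposables, so this sequence has length exactly $|F|$. By \cref{thm:HR}(c), it is complete in its exceptional wide closure $\WW_F$, and $\rank K_0(\WW_F)=|F|$. Hence $w(F)=t_{\dim X_1}\cdots t_{\dim X_r}$, which is well defined by \cref{thm:HR}(b). Since $\cox$ is rank-preserving by \cref{thm:HR}(b), we get $\ell_T(w(F))=\rank\WW_F=|F|$. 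One point to check is that $\WW_F$ is an exceptional wide subcategory in the sense required by \cref{thm:HR}(b). This is exactly what \cref{thm:HR}(c) supplies: the wide closure of an exceptional sequence is exceptional.

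\textbf{Monotonicity.} For $G\subseteq F$, we have $\WW_G\subseteq\WW_F$ directly from the definition of wide closure. Since $\cox$ is a poset isomorphism $\eWide(\HH)\to[1,c]_T$ by \cref{thm:HR}(b), inclusion of exceptional wide subcategories is carried to absolute order, so $w(G)\le_T w(F)$. If one prefers not to rely on the order on $\eWide(\HH)$ being inclusion, there is an alternative. Order $F$ so that the vertices of $G$ form a consecutive block of an exceptional sequence; then $w(F)$ factors with $w(G)$ as a subword of a reduced reflection factorization of length $|F|$, which again gives $w(G)\le_T w(F)$. The difficulty with this route is that \cref{lem:face-exceptional-order} only fixes the order "positive defect first", and a subset of a face need not sit as a block. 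For that reason I would rely on the inclusion order on $\eWide(\HH)$, which is the Hanson--Reading convention.

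\textbf{Facets.} This is the step I expect to be the main obstacle. First, every facet $C$ of $K_c$ has $n$ vertices: \cref{thm:rectified} identifies $K_c$ with $\Delta_c^{\re}(\Phi)$, and real facets have $n$ vertices by \cite[Proposition~5.14]{ReadingStella2020}. Applying \cref{lem:face-exceptional-order} to $C$ gives an exceptional sequence of length $n=\rank K_0(\HH)$. By \cref{thm:HR}(c) this sequence is complete in $\HH$, so $\WW_C=\HH$. Finally, $\cox(\HH)=c$, either by \cref{thm:HR}(b) (the top element maps to $c$) or by computing with the simple objects $(S_1,\dots,S_n)$ via \cref{lem:hereditary-realization}. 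Therefore $w(C)=c$.
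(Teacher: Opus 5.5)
Your proposal is correct and follows the paper's proof essentially step for step. You order the face via \cref{lem:face-exceptional-order}, use \cref{thm:HR}(c) for completeness and rank of $\WW_F$, and use the rank-preserving inclusion-order isomorphism of \cref{thm:HR}(b) for $\ell_T(w(F))=|F|$ and for monotonicity; for facets, \cref{thm:rectified} together with Reading--Stella's facet count gives $|C|=n$, whence $\WW_C=\HH$ and $w(C)=c$. Your worry about the subword route is unnecessary, since Hurwitz moves bring any subword of a reduced reflection factorization to the front. Even so, the inclusion-order argument you chose is exactly the paper's.
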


\begin{proof}
Order $F$ into an exceptional sequence. It is complete inside its wide closure, whose rank is therefore $|F|$. Rank preservation in \cref{thm:HR} gives $\ell_T(w(F))=|F|$. If $G\subseteq F$, then $\WW_G\subseteq\WW_F$, so $w(G)\le_Tw(F)$. If $C$ is a facet, \cref{thm:rectified} and \cite[Proposition~5.14]{ReadingStella2020} give $|C|=n$; \cref{lem:face-exceptional-order} then gives a complete exceptional sequence of length $n$, whence $\WW_C=\HH$ and $w(C)=c$.
\end{proof}

\begin{lemma}[Absolute order and right divisibility]\label{lem:absolute-right-div}
For $u,v\in P_c$,
\[
  u\le_Tv\iff u\preccurlyeq_Rv\text{ in }M_c.
\]
\end{lemma}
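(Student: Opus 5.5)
The forward direction is a direct construction from the definitions. Suppose $u\le_T v$ with $u,v\in P_c$. By \eqref{eq:absolute-right} we have $\ell_T(v)=\ell_T(vu^{-1})+\ell_T(u)$. Put $q=vu^{-1}$. Then $q\le_T v$: concatenating reduced reflection factorizations of $q$ and $u$ gives a reduced factorization of $v$, so $q$ is a prefix of such a factorization and hence lies below $v$ in absolute order, as in the proof of \cref{lem:interval-degree}. Since $v\le_T c$, we get $q\in P_c$. Therefore $v=qu$ is one of the defining interval relations of $M_c=M(P_c)$, which gives $u\preccurlyeq_R v$ in $M_c$. The degenerate cases $u=1$ and $u=v$ hold trivially.

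For the converse, suppose $v=au$ in $M_c$ for some $a\in M_c$. The plan is to map this relation into the interval group $G(P_c)$, where it says that $u$ right divides $v$ in $G(P_c)^+$. Then \cref{lem:interval-quotient} applies directly. It gives $q=vu^{-1}\in P_c$ and $\ell_T(v)=\ell_T(q)+\ell_T(u)$. By \eqref{eq:absolute-right} this is exactly $u\le_T v$. Note that the converse does not need the embedding theorem, since it only uses the image of the relation in the group.

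The only care needed is in the degree bookkeeping: \cref{lem:interval-quotient} relies on the degree homomorphism of \cref{lem:interval-degree} to bound $\ell_T(q)$ by $\deg$ of a positive word. For products of irreducible affine systems the same argument runs componentwise, or verbatim, since these lemmas only use the interval structure.

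I expect no real obstacle here. The point worth stating is that "in $M_c$" is unambiguous: by \cref{thm:interval-embedding}, $M_c$ is the positive monoid $G(P_c)^+$, so divisibility in $M_c$ and in $G(P_c)^+$ coincide, and the two directions match up.
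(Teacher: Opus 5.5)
Your proof is correct and follows the paper's argument. The forward direction is identical: set $q=vu^{-1}$, and use \eqref{eq:absolute-right} together with $v\le_T c$ to get $q\in P_c$, so that $v=qu$ is a defining relation. For the converse, the paper compares degree with reflection length directly along the product map $M_c\to W$, whereas you push the relation into $G(P_c)$ and cite \cref{lem:interval-quotient}, whose proof is exactly that comparison; as you note, neither direction needs the embedding theorem.
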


\begin{proof}
If $u\le_Tv$, put $x=vu^{-1}$. By \cref{eq:absolute-right}, $\ell_T(v)=\ell_T(x)+\ell_T(u)$; since $v\le_Tc$, this reduced factorization extends to one of $c$, so $x\in P_c$ and $v=xu$ is an interval relation.

Conversely, suppose $v=au$ in $M_c$ and let $\pi:M_c\to W$ be the product map. A monoid generator of degree $d$ has a factorization into $d$ reflections, hence $\ell_T(\pi(a))\le\deg a=\ell_T(v)-\ell_T(u)$. Since $\pi(a)=vu^{-1}$, the triangle inequality forces equality throughout, and \cref{eq:absolute-right} gives $u\le_Tv$.
\end{proof}

\begin{corollary}\label{cor:weight-axioms}
The map $\omega$ satisfies the face-weight axioms used in \cref{sec:weighted}.
\end{corollary}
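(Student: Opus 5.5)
The face-weight axioms in \cref{sec:weighted} are not yet spelled out, so the plan is to verify the natural weighted-face conditions of Josuat--Verg\`es--Nadeau type; each should be a direct consequence of \cref{thm:face-map} and \cref{lem:absolute-right-div}. We expect these to be: normalization $\omega(\varnothing)=1$; homogeneity $\deg\omega(F)=|F|$; and monotonicity, meaning that for $G\subseteq F$ the weight $\omega(G)$ right divides $\omega(F)$ in $M_c$. In addition, for a codimension-one face $G=F\setminus\{X\}$, the quotient $\omega(F)\omega(G)^{-1}$ should be an interval element of degree one, namely a reflection in $P_c$.

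The steps are as follows.

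\begin{enumerate}[label=\textup{(\arabic*)}]
\item Normalization and homogeneity are immediate from \cref{thm:face-map}. It gives $w(\varnothing)=1$ and $\ell_T(w(F))=|F|$, and by \cref{lem:interval-degree} the degree on $M_c$ restricts to reflection length on $P_c$.
\item For monotonicity, \cref{thm:face-map} gives $w(G)\le_T w(F)$. Applying \cref{lem:absolute-right-div} converts this into $\omega(G)\preccurlyeq_R\omega(F)$ in $M_c$.
\item For the codimension-one refinement, the proof of \cref{lem:absolute-right-div} shows the quotient $q=w(F)w(G)^{-1}$ lies in $P_c$, satisfies $\omega(F)=q\,\omega(G)$ as an interval relation, and has $\ell_T(q)=|F|-|G|=1$. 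Thus $q$ is a reflection below $c$.
\item If the axioms also require left divisibility or a two-sided version, repeat the argument using \eqref{eq:absolute-order} in place of \eqref{eq:absolute-right}. By the same length count, $w(G)^{-1}w(F)\in P_c$.
\item Well-definedness of $\omega(F)$ as a single element of $M_c$ follows from the remark after \cref{def:weight}: by \cref{thm:HR}, $\cox$ is independent of the chosen exceptional ordering, and \cref{lem:face-exceptional-order} guarantees that some exceptional ordering exists.
\item If the axioms include $\omega(C)=c$ for facets $C$, this is the last assertion of \cref{thm:face-map}.
\end{enumerate}

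The main obstacle is that the precise list of axioms in \cref{sec:weighted} is not yet visible. If the list includes a compatibility condition on the signs of the differential, for example requiring that the quotient reflections along the two paths around a square $G\subset G\cup\{X\}, G\cup\{Y\}\subset F$ be coherent, that would need more than a bookkeeping check. In that case we would rely on cancellativity of $M_c$ from \cref{thm:interval-embedding}: the quotients are then uniquely determined, so any such coherence follows formally from the equalities $\omega(F)=q\,\omega(G)$ computed in $M_c$.
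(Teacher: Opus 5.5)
Your proposal is correct and follows essentially the same argument as the paper, which places this corollary immediately after \cref{thm:face-map} and \cref{lem:absolute-right-div} and derives it from them. The axioms \eqref{eq:weight-axioms} are exactly normalization, homogeneity (via \cref{lem:interval-degree}) and right-divisibility monotonicity, as in your steps (1)--(2); your steps (3)--(6) are correct but not needed, since \cref{sec:weighted} handles the deletion quotients and their coherence separately, using right cancellation and \cref{lem:two-deletion}.
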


For $w\in P_c$, put
\begin{equation}\label{eq:principal-vertices}
  \WW_w=\cox^{-1}(w),\qquad \mathcal A_c(w)=\ind\Exc(\WW_w)\subseteq V(K_c),
\end{equation}
and define the principal fibre
\begin{equation}\label{eq:principal-fibre}
  K_w=\{F\in K_c:\omega(F)\preccurlyeq_Rw\}.
\end{equation}
Here $\WW_w$ is a categorical wide subcategory and must not be confused with the reflection subgroup $W(w)$.

\begin{proposition}[Principal fibres are induced]\label{prop:principal-induced}
For $w\in P_c$,
\[
  K_w=K_c[\mathcal A_c(w)].
\]
\end{proposition}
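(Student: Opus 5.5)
We prove the two inclusions separately, reducing the fibre condition to inclusion of exceptional wide subcategories. The key reformulation uses \cref{lem:absolute-right-div} together with \cref{thm:HR}(b). For a face $F\in K_c$, both $\omega(F)=w(F)$ and $w$ lie in $P_c$. Hence
\[
  \omega(F)\preccurlyeq_Rw\iff w(F)\le_Tw\iff \WW_F\subseteq\WW_w,
\]
where the last step uses that $\cox$ is a poset isomorphism $\eWide(\HH)\to[1,c]_T$. Thus $K_w$ consists exactly of the faces $F$ whose wide closure $\WW_F$ is contained in $\WW_w$.

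For the inclusion $K_w\subseteq K_c[\mathcal A_c(w)]$, let $F\in K_w$. Every vertex $X\in F$ lies in $\WW_F\subseteq\WW_w$. Moreover $X$ is an indecomposable exceptional module, and exceptionality is intrinsic: $\End(X)$ is a division algebra and $\Ext^1(X,X)=0$, where the $\Ext^1$ may be computed in $\HH$ or in the extension-closed subcategory $\WW_w$. So $X\in\ind\Exc(\WW_w)=\mathcal A_c(w)$. Hence $F$ is a face of $K_c$ with all vertices in $\mathcal A_c(w)$, i.e.\ a face of the induced subcomplex.

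For the reverse inclusion, let $F$ be a face of $K_c$ all of whose vertices lie in $\mathcal A_c(w)$. Since $\WW_w$ is wide and contains every $X\in F$, minimality of the wide closure gives $\WW_F=\wide\langle X:X\in F\rangle\subseteq\WW_w$. By the reformulation this means $\omega(F)\preccurlyeq_Rw$, so $F\in K_w$. The empty face is covered by $w(\varnothing)=1$.

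The only point needing care is the first reformulation, specifically that inclusion of the exceptional wide subcategories $\WW_F\subseteq\WW_w$ matches $w(F)\le_Tw$. This requires $\WW_F$ to be exceptional, which holds by \cref{lem:face-exceptional-order} together with \cref{thm:HR}(c). The order-isomorphism in \cref{thm:HR}(b) then gives the equivalence in both directions. I expect no real obstacle beyond checking this compatibility and the conversion between $\le_T$ and right divisibility supplied by \cref{lem:absolute-right-div}.
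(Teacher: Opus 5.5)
Your proposal is correct and follows essentially the same route as the paper. Both arguments use the chain $\omega(F)\preccurlyeq_R w\iff w(F)\le_T w\iff \WW_F\subseteq\WW_w$ from \cref{lem:absolute-right-div} and \cref{thm:HR}, and then identify the last condition with every vertex of $F$ lying in $\WW_w$. You additionally spell out two points the paper leaves implicit: minimality of the wide closure, and that exceptionality of a vertex is the same whether computed in $\WW_w$ or in $\HH$.
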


\begin{proof}
For a face $F$, \cref{lem:absolute-right-div,thm:HR} give
\[
  \omega(F)\preccurlyeq_Rw\iff w(F)\le_Tw\iff \WW_F\subseteq\WW_w.
\]
The last condition is exactly that every vertex of $F$ lies in $\WW_w$.
\end{proof}

\subsection{Categorical decomposition of principal fibres}\label{sec:decomposition}

Let $1\ne w\in P_c$. Decompose
\begin{equation}\label{eq:block-decomp}
  \WW_w=\VV_1\oplus\cdots\oplus\VV_s
\end{equation}
into connected exact blocks. For $i\ne j$,
\begin{equation}\label{eq:block-orth}
  \Hom(\VV_i,\VV_j)=0=\Ext^1(\VV_i,\VV_j)
\end{equation}
in both directions. Each block is a connected hereditary module category by \cref{thm:HR}. Let $B$ be the ambient symmetrized Euler form. Because $B$ is positive semidefinite with one-dimensional radical $\mathbb R\delta$, the radical of its restriction to the real span $L_i$ of $K_0(\VV_i)$ is
\begin{equation}\label{eq:block-radical}
  \rad(B|_{L_i})=L_i\cap\mathbb R\delta.
\end{equation}
Indeed, for a positive-semidefinite form, $B(x,x)=0$ already forces $x\in\rad B$. Thus, by the standard finite/affine classification of connected symmetrizable Cartan data \cite[Sections~2.2 and 8]{HansonReading2025}, the datum of $\VV_i$ is either positive definite (Dynkin) or positive semidefinite of corank one (affine). Moreover, two different affine blocks cannot occur: each affine block would contain a nonzero null vector proportional to $\delta$, whereas the direct-sum decomposition of $K_0(\WW_w)$ makes the subspaces $L_i$ intersect trivially. Hence at most one block is affine.

For a connected block $\VV\subseteq\WW_w$, define
\begin{equation}\label{eq:block-complex}
  K_\VV=K_c[\ind\Exc(\VV)].
\end{equation}

\begin{theorem}[Embedded component join]\label{thm:embedded-join}
For \cref{eq:block-decomp},
\[
  K_w=K_{\VV_1}*\cdots*K_{\VV_s}
\]
as an equality of face sets in the ambient complex $K_c$.
\end{theorem}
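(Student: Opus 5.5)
The plan is to prove the equality of face sets directly from three earlier facts. By \cref{prop:principal-induced}, $K_w=K_c[\mathcal A_c(w)]$ is an induced subcomplex. By \cref{def:rectified}, $K_c$ is flag. The blocks of \cref{eq:block-decomp} are orthogonal in the sense of \cref{eq:block-orth}. The argument has three steps: first show the vertex set splits, then show both inclusions of the join.

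\emph{Step 1: the vertex set splits.} Since $\WW_w=\VV_1\oplus\cdots\oplus\VV_s$ is a decomposition into exact blocks, every object of $\WW_w$ is a direct sum of objects of the $\VV_i$. Hence an indecomposable object of $\WW_w$ lies in exactly one block.

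We also need that ``exceptional'' means the same thing in $\VV_i$, in $\WW_w$ and in $\HH$. The blocks and $\WW_w$ are full subcategories, so $\End$ is unchanged. They are extension-closed wide subcategories, so Yoneda $\Ext^1$ agrees with ambient $\Ext^1$. Therefore
\[
  \mathcal A_c(w)=\ind\Exc(\WW_w)=\bigsqcup_{i=1}^s\ind\Exc(\VV_i).
\]
This is the step I expect to need the most care. In particular, the $\Ext^1$ comparison for extension-closed exact subcategories of a hereditary category must be justified, for instance by the standard Yoneda description.

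\emph{Step 2: $K_w\subseteq K_{\VV_1}*\cdots*K_{\VV_s}$.} Let $F\in K_w$ and put $F_i=F\cap\ind\Exc(\VV_i)$. By Step 1, $F$ is the disjoint union of the $F_i$. Each $F_i$ is a subset of a face of $K_c$, hence a face of $K_c$, and its vertices lie in $\VV_i$. So $F_i\in K_{\VV_i}$ by \cref{eq:block-complex}, and $F$ is a face of the join.

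\emph{Step 3: the reverse inclusion.} Let $F_i\in K_{\VV_i}$ and put $F=\bigcup_iF_i$. Since $K_c$ is flag, it suffices to show that any two distinct vertices of $F$ span an edge.
\begin{enumerate}[label=\textup{(\alph*)}]
\item If both vertices lie in the same $F_i$, they span an edge because $F_i$ is a face.
\item If $X\in\VV_i$ and $Y\in\VV_j$ with $i\ne j$, then \cref{eq:block-orth} gives $\Hom$ and $\Ext^1$ vanishing between $X$ and $Y$ in both directions. This satisfies every clause of \cref{def:rectified}:
\begin{itemize}
\item for a same-side pair, both $\Ext^1$ groups vanish;
\item for a mixed pair with $\partial(N)\le0<\partial(I)$, we have $\Hom(N,I)=0$.
\end{itemize}
So $\{X,Y\}$ is an edge.
\end{enumerate}
Hence $F\in K_c$. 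Its vertices lie in $\mathcal A_c(w)$, so $F\in K_w$ by \cref{prop:principal-induced}. The $F_i$ are disjoint, so this union is exactly the join face. Steps 2 and 3 together give the stated equality of face sets inside $K_c$.
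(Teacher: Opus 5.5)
Your proposal is correct and follows the paper's own proof. Cross-block Hom/Ext orthogonality makes every cross-block pair an edge, including mixed-defect pairs, and flagness of $K_c$ upgrades pairwise edges to faces; for the other inclusion, the induced-subcomplex description of $K_w$ in \cref{prop:principal-induced}, together with the disjoint splitting of $\ind\Exc(\WW_w)$ over the blocks, gives the containment, and your Step~1 simply makes this splitting explicit where the paper cites it directly.
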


\begin{proof}
Objects in distinct blocks are orthogonal for Hom and Ext by \cref{eq:block-orth}. The compatibility criterion \cref{lem:defect-compatibility} therefore makes every cross-block pair compatible, including pairs on opposite defect sides. Since $K_c$ is flag, the union of arbitrary componentwise faces is an ambient face. Conversely, \cref{prop:principal-induced} says that the vertex set of $K_w$ is the disjoint union of the component vertex sets.
\end{proof}

\begin{lemma}[Cartan datum of a connected block]\label{lem:block-cartan}
Let $\VV$ be a connected block of an exceptional wide subcategory of $\HH$. The exact wide inclusion $\iota:\VV\hookrightarrow\HH$ induces a morphism of generalized Cartan lattices
\[
  \iota_*:K_0(\VV)\longrightarrow K_0(\HH),
\]
and embeddings of the associated real root system and Weyl group
\[
  \Phi_\VV^{\re}\hookrightarrow\Phi^{\re},\qquad W_\VV\hookrightarrow W,
  \qquad s_\alpha\mapsto s_{\iota_*(\alpha)}.
\]
Under this embedding the local Coxeter element maps to $e_\VV=\cox(\VV)\in[1,c]_T$.
\end{lemma}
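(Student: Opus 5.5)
We take the generalized Cartan lattice of $\VV$ to be $K_0(\VV)$ with the restricted Euler form, whose basis is given by the classes of the simple objects $T_1,\dots,T_r$ of $\VV$. By \cref{thm:HR}(d), $\VV$ is exact-equivalent to $\operatorname{mod}\Gamma$ for a finite-dimensional hereditary algebra $\Gamma$. The simples $T_j$ can be placed in an exceptional ordering, using the acyclic orientation of $\Gamma$ exactly as in the proof of \cref{lem:hereditary-realization}. This ordering is complete in $\VV$.

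Since $\iota$ is exact, $\iota_*$ is a group homomorphism on Grothendieck groups. Because $\VV$ is extension-closed in the hereditary category $\HH$, $\Hom$ and $\Ext^1$ computed in $\VV$ agree with those computed in $\HH$. It follows that $\iota_*$ intertwines the Euler forms, and hence the symmetrized forms $B$. For each simple $T_j$ we have $\End(T_j)$ a division algebra and $\Ext^1(T_j,T_j)=0$, so $\iota_*[T_j]$ is a real Schur root of $\HH$; in particular it is a real root.

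The reflection $s_{[T_j]}$ of $K_0(\VV)$ is $x\mapsto x-\frac{2B(x,[T_j])}{B([T_j],[T_j])}[T_j]$. Since $\iota_*$ preserves $B$, it intertwines $s_{[T_j]}$ with $t_{\iota_*[T_j]}$ restricted to the image. This gives a homomorphism $W_\VV\to W$ with $s_\alpha\mapsto s_{\iota_*\alpha}$.

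Real roots of $\VV$ are the $W_\VV$-translates of simple roots, so $\iota_*$ carries $\Phi^{\re}_\VV$ into the $W$-orbit of real roots, i.e. into $\Phi^{\re}$. Injectivity of $\iota_*$ holds because the classes $\iota_*[T_j]$ come from an exceptional sequence, so they are linearly independent (Hanson--Reading rank statement, \cref{thm:HR}(c)).

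Injectivity of $W_\VV\to W$ is the main obstacle. Here is the approach.
\begin{enumerate}
\item $W_\VV$ acts faithfully on $K_0(\VV)\otimes\mathbb R$ as the Weyl group of its Cartan datum (standard).
\item The image of $W_\VV$ preserves the subspace $L=\iota_*K_0(\VV)\otimes\mathbb R$.
\item If $w\mapsto1$, then $w$ acts trivially on $L$, and $\iota_*$ is an isomorphism onto $L$, so $w$ acts trivially on $K_0(\VV)$. By faithfulness, $w=1$.
\end{enumerate}
The subtle point is the affine block, where $B|_L$ is degenerate. The formula above nevertheless makes sense because $B([T_j],[T_j])>0$ for real roots. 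Faithfulness of the affine Weyl group on the full root lattice, including the null direction, is standard via the geometric representation. Alternatively, one can cite Dyer's theorem: the image is the reflection subgroup generated by the $t_{\iota_*[T_j]}$.

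Finally, by definition \eqref{eq:cox-wide},
\[
e_\VV=\cox(\VV)=t_{\dim T_1}\cdots t_{\dim T_r}=\iota(s_{[T_1]}\cdots s_{[T_r]}).
\]
The product $s_{[T_1]}\cdots s_{[T_r]}$ is the local Coxeter element determined by the exceptional ordering of the simples of $\VV$. By \cref{thm:HR}(b), $e_\VV\in[1,c]_T$.
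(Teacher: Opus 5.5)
Your proposal has a genuine gap in the Weyl-group step, where the logic runs the wrong way. Knowing that $\iota_*$ intertwines $s_{[T_j]}$ with $t_{\iota_*[T_j]}$ on $L=\iota_*K_0(\VV)\otimes\mathbb R$ only shows that restriction to $L$ defines a \emph{surjection} $\rho\colon W'\to W_\VV$, where $W'=\langle t_{\iota_*[T_j]}\rangle\subseteq W$.

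A homomorphism $W_\VV\to W$ with $s_{[T_j]}\mapsto t_{\iota_*[T_j]}$ exists exactly when $\rho$ is injective, that is, when no nontrivial element of $W'$ acts as the identity on $L$. This is the statement you actually need, and you never prove it. Your steps (1)--(3) prove injectivity of a map that is assumed to exist, and that injectivity is automatic once the map is well defined. So the ``main obstacle'' has been misidentified, and your remark about Dyer only describes the image.

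The missing point is not vacuous for an affine block. If $w\in W'$ is the identity on $L$, invariance of $B$ only gives $wx-x\in L\cap L^\perp=\mathbb R\delta$. So a priori $w$ is a transvection $x\mapsto x+f(x)\delta$ with $f|_L=0$, and $f\neq0$ must still be ruled out. For a Dynkin block the point is easy: $V=L\oplus L^\perp$ and $W'$ fixes $L^\perp$ pointwise.

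The paper sidesteps this by citing Hubery--Krause for the Cartan-lattice morphism and the injections on roots and Weyl groups. It then identifies the local Coxeter element exactly as you do in your last paragraph.

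You can repair the step in either of two ways.
\begin{enumerate}
\item Present $W_\VV$ as the Coxeter group of its symmetrizable Cartan datum and check $(t_it_j)^{m_{ij}}=1$ in $W$.
 \begin{itemize}
 \item If $m_{ij}<\infty$, the plane $P=\operatorname{span}(\iota_*[T_i],\iota_*[T_j])$ carries the same positive definite form as locally, since $\iota_*$ is an isometry. Hence $V=P\oplus P^\perp$, the two reflections fix $P^\perp$ pointwise, and on $P$ they generate the same dihedral group as in $W_\VV$.
 \item If $m_{ij}=\infty$, there is nothing to check.
 \end{itemize}
 This defines the homomorphism, and your faithfulness argument then gives injectivity.
\item Use Dyer's theory.
 \begin{itemize}
 \item Every $w\in W'$ satisfies $wx-x\in L$. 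Hence every reflection $t_\gamma\in W'$ has $-2\gamma=t_\gamma\gamma-\gamma\in L$, so all canonical simple roots of $W'$ lie in $L$.
 \item A nontrivial $w\in W'$ has a canonical descent, so it sends some canonical simple root to a negative root. Therefore $w|_L\neq\mathrm{id}$.
 \end{itemize}
\end{enumerate}

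A minor point: linear independence of the $\iota_*[T_j]$ in $K_0(\HH)$ is not what \cref{thm:HR}(c) asserts, since that result concerns the rank of $K_0(\VV)$. Instead, use that the Euler-form Gram matrix of an exceptional sequence is triangular with positive diagonal.
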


\begin{proof}
The inclusion is fully faithful, exact and extension-closed. Hubery--Krause therefore give the generalized Cartan lattice morphism and the corresponding injections on roots and Weyl groups \cite[Theorem~1.1, Theorems~5.2, 5.6 and Corollary~7.3]{HuberyKrause2016}. If $L_1,\ldots,L_r$ are the local simples in an exceptional ordering, they form a complete exceptional sequence in $\VV$. Thus the local Coxeter element $s_{[L_1]}\cdots s_{[L_r]}\in W_\VV$ maps under the displayed Weyl-group embedding to $t_{\dim L_1}\cdots t_{\dim L_r}=\cox(\VV)$ by \cref{thm:HR}.
\end{proof}

\section{Dynkin and affine blocks, and complete Coxeter components}\label{sec:factors}

\subsection{Dynkin blocks and a split HRS tilt}
Fix a connected Dynkin block $\VV$. Divide its indecomposables by ambient defect:
\begin{equation}\label{eq:TF}
  \TT=\add\{I\in\ind\VV:\partial(I)>0\},\qquad
  \FF=\add\{N\in\ind\VV:\partial(N)\le0\}.
\end{equation}

\begin{lemma}[Split torsion pair]\label{lem:split-torsion}
The pair $(\TT,\FF)$ is a split torsion pair in $\VV$, and
\[
  \Ext^1_\VV(\FF,\TT)=0.
\]
\end{lemma}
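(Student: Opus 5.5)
The plan is to verify the torsion-pair axioms directly from tame separation in the ambient category $\HH$, and then deduce splitting from the Ext-vanishing. Since $\VV$ is a wide (exact abelian, extension-closed) subcategory of $\HH$, morphisms and extensions computed in $\VV$ agree with those in $\HH$. In particular $\Ext^1_\VV(\FF,\TT)\subseteq\Ext^1_\HH(\FF,\TT)$, the inclusion being an equality because $\VV$ is extension-closed.

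\textbf{Step 1: the two vanishing conditions.} By \cref{eq:tame-trichotomy}, indecomposables $I\in\TT$ are preinjective, and indecomposables $N\in\FF$ are preprojective or regular. Tame separation \cref{eq:tame-separation} gives $\Hom_\HH(I,N)=0$, so $\Hom(\TT,\FF)=0$. Auslander--Reiten duality \cref{eq:tame-ext} gives $\Ext^1_\HH(N,I)=0$, so $\Ext^1_\VV(\FF,\TT)=0$. Both conditions pass to finite direct sums, hence to all of $\TT$ and $\FF$.

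\textbf{Step 2: the canonical sequence.} Every object of $\VV$ is a finite direct sum of indecomposables, because $\VV$ is a module category of a finite-dimensional hereditary algebra by \cref{thm:HR}(d). Each indecomposable of $\VV$ is an indecomposable of $\HH$ with a well-defined defect, so it lies in exactly one of $\ind\TT$ or $\ind\FF$. Thus every $X\in\VV$ decomposes as $X\cong X_{\TT}\oplus X_{\FF}$. The split sequence $0\to X_\TT\to X\to X_\FF\to0$ is then the canonical sequence.

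\textbf{Step 3: closure properties.} Together with $\Hom(\TT,\FF)=0$, Step 2 yields the maximality characterizations
\[
  \TT=\{X:\Hom(X,\FF)=0\},\qquad \FF=\{Y:\Hom(\TT,Y)=0\}.
\]
For the first, if $\Hom(X,\FF)=0$ then the projection $X\to X_\FF$ vanishes, so $X_\FF=0$; the second is symmetric. Hence $(\TT,\FF)$ is a torsion pair, and the decomposition in Step 2 shows that it splits. Alternatively, splitting follows from $\Ext^1_\VV(\FF,\TT)=0$ via the standard criterion for torsion pairs in hereditary categories.

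\textbf{Main obstacle.} The only subtle point is ensuring that every indecomposable of $\VV$ is actually an exceptional $\HH$-indecomposable with a definite defect, so that the dichotomy in Step 2 is exhaustive. For a Dynkin block this holds because all indecomposables of a representation-finite hereditary category are exceptional. The block being Dynkin also rules out any need to handle homogeneous tubes.
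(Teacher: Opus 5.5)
Your proof is correct and follows essentially the same route as the paper. Both arguments decompose each object of $\VV$ into its positive-defect and nonpositive-defect indecomposable summands, use tame separation for $\Hom(\TT,\FF)=0$, use Auslander--Reiten duality for $\Ext^1_\HH(\FF,\TT)=0$, and pass to $\Ext^1_\VV$ through the wide embedding. The differences are only cosmetic: you cite the already-recorded vanishing \eqref{eq:tame-ext} rather than rederiving it with the projective case treated separately, and you spell out the maximality characterizations of $\TT$ and $\FF$ that the paper leaves implicit.
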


\begin{proof}
The category $\VV$ is representation-finite, so each object is a direct sum of indecomposables from $\TT$ and $\FF$. Tame separation gives $\Hom(\TT,\FF)=0$, hence these decompositions provide split torsion-pair sequences. Let $F\in\FF$ and $T\in\TT$. If $F$ is projective, $\Ext^1(F,T)=0$. Otherwise AR duality identifies $D\Ext^1_\HH(F,T)$ with a quotient of $\Hom_\HH(T,\tau F)$; whenever nonzero, $\tau F$ is non-preinjective, whereas $T$ is preinjective, so tame separation gives zero. The wide embedding computes the same first extension group.
\end{proof}

Form the backward HRS heart
\begin{equation}\label{eq:HRS-heart}
  \VV^\dagger=\{E\in D^b(\VV):H^0(E)\in\FF,\ H^1(E)\in\TT,\ H^j(E)=0\ (j\ne0,1)\}.
\end{equation}
For a split torsion pair the realization functor $D^b(\VV^\dagger)\to D^b(\VV)$ is an equivalence \cite[Example~4.2(2)]{ChenHanZhou2019}; compare \cite[Chapter~I, Section~2]{HRS1996}.

\begin{lemma}[The tilted heart is hereditary]\label{lem:tilted-hereditary}
The heart $\VV^\dagger$ is hereditary. Its indecomposable objects are
\[
  \ind\VV^\dagger=\ind\FF\sqcup\{T[-1]:T\in\ind\TT\},
\]
and for $F,F_i\in\FF$, $T,T_i\in\TT$,
\begin{align*}
  \Ext^1_{\VV^\dagger}(F_1,F_2)&=\Ext^1_\VV(F_1,F_2),\\
  \Ext^1_{\VV^\dagger}(T_1[-1],T_2[-1])&=\Ext^1_\VV(T_1,T_2),\\
  \Ext^1_{\VV^\dagger}(F,T[-1])&=\Hom_\VV(F,T),\\
  \Ext^1_{\VV^\dagger}(T[-1],F)&=0.
\end{align*}
\end{lemma}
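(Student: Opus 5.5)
The plan is to use the standard Happel--Reiten--Smalø description of a tilted heart for a split torsion pair, computing all extension groups inside $D^b(\VV)$ through the realization equivalence $D^b(\VV^\dagger)\simeq D^b(\VV)$. This equivalence identifies $\Ext^k_{\VV^\dagger}(X,Y)$ with $\Hom_{D^b(\VV)}(X,Y[k])$ for $X,Y\in\VV^\dagger$.

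We begin by describing objects. An object $E$ of $\VV^\dagger$ sits in a triangle
\[
  H^0(E)\to E\to H^1(E)[-1]\to H^0(E)[1].
\]
Since $\VV$ is hereditary, this triangle is classified by a class in $\Hom_{D^b(\VV)}(H^1(E)[-1],H^0(E)[1])=\Ext^2_\VV(H^1(E),H^0(E))=0$. Hence it splits, so $E\cong F\oplus T[-1]$ with $F\in\FF$ and $T\in\TT$. Indecomposability then forces $E$ to be either an indecomposable of $\FF$ or of the form $T[-1]$ with $T\in\ind\TT$. This gives the displayed description of $\ind\VV^\dagger$.

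Next we compute the four extension groups in $D^b(\VV)$:
\begin{align*}
  \Hom(F_1,F_2[1])&=\Ext^1_\VV(F_1,F_2),\\
  \Hom(T_1[-1],T_2)&=\Ext^1_\VV(T_1,T_2),\\
  \Hom(F,T[-1][1])&=\Hom_\VV(F,T),\\
  \Hom(T[-1],F[1])&=\Ext^2_\VV(T,F)=0.
\end{align*}
Only the last group needs heredity of $\VV$. The vanishing $\Ext^1_\VV(\FF,\TT)=0$ from \cref{lem:split-torsion} is not required for these four formulas; it is the input for the next step.

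For heredity of $\VV^\dagger$ we must show $\Ext^2_{\VV^\dagger}(X,Y)=\Hom_{D^b(\VV)}(X,Y[2])=0$. By additivity it suffices to treat indecomposable $X$ and $Y$, which gives four cases:
\begin{align*}
  \Hom(F_1,F_2[2])&=\Ext^2_\VV=0,\\
  \Hom(T_1[-1],T_2[1])&=\Ext^2_\VV=0,\\
  \Hom(T[-1],F[2])&=\Ext^3_\VV=0,\\
  \Hom(F,T[-1][2])&=\Ext^1_\VV(F,T)=0.
\end{align*}
The last case is exactly where \cref{lem:split-torsion} is used. Higher $\Ext^k$ vanish for the same reasons, since shifts of degree $\ge2$ only land in $\Ext^{\ge2}_\VV$ or in $\Ext^1_\VV(\FF,\TT)$.

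The main subtlety I expect is justifying the identification of $\Ext^k_{\VV^\dagger}$ with derived Homs for every $k$. For $k\le1$ this holds for any heart. For $k\ge2$ it relies on the realization functor being an equivalence, which we take from \cite[Example~4.2(2)]{ChenHanZhou2019}. Alternatively, heredity follows directly from the vanishing of $\Hom_{D^b}(X,Y[2])$: Yoneda $\Ext^2$ in a heart injects into $\Hom_{D^b}(X,Y[2])$, and vanishing of Yoneda $\Ext^2$ already gives heredity (it forces all higher Yoneda Exts to vanish). This route avoids the equivalence issue altogether.
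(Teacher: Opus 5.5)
Your proposal is correct and follows essentially the same route as the paper. Both compute all extension groups in $D^b(\VV)$, obtain the four $\Ext^1$ formulas from shifts and heredity of $\VV$, and kill the only possible second extension $\Ext^1_\VV(F,T)$ with \cref{lem:split-torsion}. Both also split the canonical sequence $F\to E\to T[-1]$ because its connecting class lies in $\Hom_{D^b(\VV)}(T[-1],F[1])=\Ext^2_\VV(T,F)=0$, which is exactly the paper's $\Ext^1_{\VV^\dagger}(T[-1],F)=0$. Your extra remark, that injectivity of Yoneda $\Ext^2_{\VV^\dagger}$ into $\Hom_{D^b(\VV)}(-,-[2])$ already gives heredity, is a valid way to avoid depending on the full realization equivalence.
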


\begin{proof}
Full faithfulness of the realization functor allows all extensions to be computed in $D^b(\VV)$. The four identities follow from shifts and heredity. The only potentially nonzero second extension is
\[
  \Ext^2_{\VV^\dagger}(F,T[-1])=\Ext^1_\VV(F,T),
\]
which vanishes by \cref{lem:split-torsion}. Higher extensions vanish for degree reasons.

Every object of the HRS heart fits into its canonical short exact sequence
\[
  0\longrightarrow F\longrightarrow E\longrightarrow T[-1]\longrightarrow0.
\]
The last displayed formula gives $\Ext^1_{\VV^\dagger}(T[-1],F)=0$, so this sequence splits. Hence every indecomposable belongs to exactly one of the two displayed classes.
\end{proof}

\subsection{Length and module realization of the tilted heart}

\begin{lemma}[Length and module realization]\label{lem:tilted-length}
The heart $\VV^\dagger$ is a length category. Moreover, there is a finite-dimensional representation-finite hereditary $K$-algebra $\Gamma_\dagger$ and an exact equivalence
\[
  \VV^\dagger\simeq\operatorname{mod}\Gamma_\dagger.
\]
Its Euler lattice is isometric to that of $\VV$; in particular, it has the same finite Dynkin Coxeter type as $\VV$.
\end{lemma}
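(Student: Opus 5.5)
Since $\VV$ is a connected Dynkin block, it is equivalent to $\operatorname{mod}\Gamma$ for a finite-dimensional hereditary algebra $\Gamma$ (\cref{thm:HR}(d)), so $\VV$ has finitely many indecomposables and every Hom and Ext space in $\VV$ is finite-dimensional over $K$. Two consequences for $\VV^\dagger$ follow directly. By \cref{lem:tilted-hereditary}, $\VV^\dagger$ has finitely many indecomposables, namely $\ind\FF\sqcup\ind\TT[-1]$. Because the realization functor is fully faithful, every morphism space in $\VV^\dagger$ is a morphism space in $D^b(\VV)$, and hence finite-dimensional over $K$.

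\textbf{Length.} The plan is to show that $\VV^\dagger$ is a Krull--Schmidt abelian category with finitely many indecomposables. In such a category every object has finite length. Indeed, each object is a finite direct sum of indecomposables, and any proper monomorphism $X\hookrightarrow Y$ that is not an isomorphism strictly decreases a numerical invariant. One can take $\sum_{Z}\dim_K\Hom(Z,-)$, summed over the finitely many indecomposables $Z$; this is additive along short exact sequences up to the Ext correction, and it is strictly monotone on proper subobjects. A cleaner route is to form the additive generator $G=\bigoplus_{Z\in\ind\VV^\dagger}Z$ and note that $\Hom(G,-)$ is faithful and exact enough to bound chain lengths by $\dim_K\Hom(G,Y)$. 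I expect this to be the main obstacle, since the hearts of a bounded t-structure need not be length in general. The finiteness of indecomposables together with Hom-finiteness is exactly what rescues the argument here. As a sanity check, the simple objects of $\VV^\dagger$ should be the simples of $\FF$ together with the shifts $T[-1]$ of the $\Ext$-projective objects of $\TT$.

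\textbf{Module realization.} Once length is established, $\VV^\dagger$ is a Hom-finite $K$-linear length abelian category with finitely many simple objects. The rank of its Grothendieck group equals $\rank K_0(D^b(\VV))=\rank K_0(\VV)$. A Hom-finite length category with finitely many indecomposables has enough projectives: each simple has a projective cover, obtained by choosing a maximal indecomposable object with that top. Hence $\VV^\dagger\simeq\operatorname{mod}\Gamma_\dagger$ with $\Gamma_\dagger=\End(P)^{\mathrm{op}}$ for a projective generator $P$. The algebra $\Gamma_\dagger$ is hereditary by \cref{lem:tilted-hereditary} and representation-finite because $\ind\VV^\dagger$ is finite.

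\textbf{Euler lattice.} The derived equivalence $D^b(\VV^\dagger)\simeq D^b(\VV)$ induces an isomorphism $K_0(\VV^\dagger)\cong K_0(\VV)$, which preserves the Euler form because the Euler form is defined through $\sum(-1)^i\dim\Hom(-,-[i])$. It also preserves the $\End$-dimensions of simples up to the division-algebra data, since exceptional objects correspond to exceptional objects. Positive definiteness of the symmetrized form therefore transfers, and the isometry class of a Hom-finite hereditary Cartan lattice determines the Dynkin (species) type. This gives the same finite Coxeter type as $\VV$.
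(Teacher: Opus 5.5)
Your proposal is correct, but for the length and module-realization assertions it takes a different route from the paper. The Euler-lattice step is essentially the paper's.

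\textbf{How the routes differ.} The paper gets the length property by citing Grimeland--Jacobsen: $\VV^\dagger$ is Hom-finite, Krull--Schmidt, and has finitely many indecomposables. It then obtains the module realization from K\"onig--Yang's theorem that a length heart of a bounded $t$-structure on $D^b(\operatorname{mod}\Lambda_\VV)$ is $\operatorname{mod}$ of a finite-dimensional algebra. You instead work inside the abelian category $\VV^\dagger$. You use only Hom-finiteness and the fact, from the splitting in \cref{lem:tilted-hereditary}, that every object lies in $\add G$. This is self-contained and avoids the external length-heart machinery. The cost is that it depends essentially on representation-finiteness, whereas K\"onig--Yang applies to any length heart.

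\textbf{Steps to write out.} You only gesture at two steps.
\begin{enumerate}
\item Length. Let $i\colon X\hookrightarrow Y$ be a monomorphism with cokernel $q$, and suppose $\Hom(G,i)$ is bijective. Left exactness gives $\Hom(G,q)=0$. Faithfulness of $\Hom(G,-)$ (every object is a quotient of some $G^m$) then forces $q=0$. Hence $\dim_K\Hom(G,-)$ strictly increases along chains of subobjects, and no ``Ext correction'' is involved.
\item Projective covers. Let $P$ have simple top $S$ and maximal length among such objects. Consider a nonsplit extension $0\to S'\to E\to P\to 0$ with $S'$ simple. Then $S'$ lies in every maximal subobject of $E$; otherwise $E=S'\oplus M$ with $M\cong P$, which splits the sequence. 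So $E$ has top $S$ and strictly larger length, a contradiction. D\'evissage on length then gives $\Ext^1(P,-)=0$.
\end{enumerate}

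\textbf{An aside to drop.} Your ``sanity check'' on the simples of $\VV^\dagger$ is false in general. Take $\VV=\operatorname{mod}K(1\to 2)$ with the split torsion pair $\TT=\add S_1$, $\FF=\add\{S_2,P_1\}$. The triangle $S_1[-1]\to S_2\to P_1\to S_1$ is a short exact sequence in $\VV^\dagger$. So the simple module $S_2\in\FF$ is not simple in $\VV^\dagger$; the simples there are $S_1[-1]$ and $P_1$. This claim plays no role in your argument.

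Similarly, your remark about preserving the $\End$-dimensions of simples is redundant. For exceptional $X$ one has $\langle[X],[X]\rangle=\dim_K\End(X)$, so that data is already part of the Euler form preserved by $D^b(\VV^\dagger)\simeq D^b(\VV)$.
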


\begin{proof}
Since $\VV$ is Dynkin, it has only finitely many indecomposable objects. The heart $\VV^\dagger$ is Hom-finite and Krull--Schmidt, and \cref{lem:tilted-hereditary} shows that it has only finitely many indecomposable isomorphism classes. Hence $\VV^\dagger$ is a length category by \cite[Theorem~7]{GrimelandJacobsen2015}.

By \cref{thm:HR}, write $\VV\simeq\operatorname{mod}\Lambda_\VV$ for a finite-dimensional hereditary $K$-algebra. The category $\VV^\dagger$ is the heart of a bounded $t$-structure on $D^b(\operatorname{mod}\Lambda_\VV)$. K\"onig--Yang's length-heart theorem therefore gives an exact equivalence
\[
  \VV^\dagger\simeq\operatorname{mod}\Gamma_\dagger
\]
for a finite-dimensional algebra $\Gamma_\dagger$ \cite[Corollary~6.2]{KoenigYang2014}. The algebra is hereditary by \cref{lem:tilted-hereditary} and representation-finite because $\VV^\dagger$ has only finitely many indecomposables. Finally, the realization equivalence $D^b(\VV^\dagger)\simeq D^b(\VV)$ preserves the Euler pairing on $K_0$. Thus the Euler lattices are isometric, and in particular the two categories have the same finite Dynkin Coxeter type.
\end{proof}

\subsection{The positive finite cluster ball}

\begin{lemma}[Dynkin rigid complex and positive clusters]\label{lem:Dynkin-positive}
Let $\mathscr R(\VV^\dagger)$ be the flag complex of pairwise $\Ext^1$-orthogonal indecomposable exceptional objects of the hereditary Dynkin heart $\VV^\dagger$. If $c_\dagger$ is the Coxeter element determined by an exceptional ordering of its simple objects, then the dimension-vector map identifies $\mathscr R(\VV^\dagger)$ with the positive $c_\dagger$-cluster complex of Josuat--Verg\`es--Nadeau.
\end{lemma}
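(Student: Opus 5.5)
The identification reduces to the finite-type theory of Buan--Marsh--Reineke--Reiten--Todorov together with the Hanson--Reading correspondences. By \cref{lem:tilted-length}, $\VV^\dagger\simeq\operatorname{mod}\Gamma_\dagger$ for a representation-finite hereditary algebra of Dynkin type, with the same Euler lattice as $\VV$. Every indecomposable of a representation-finite hereditary category is exceptional: it is a brick with $\Ext^1(X,X)=0$, since its dimension vector is a positive real root. So the vertex set of $\mathscr R(\VV^\dagger)$ is $\ind\VV^\dagger$. Choose an exceptional ordering $L_1,\ldots,L_r$ of the simple objects of $\VV^\dagger$, and let $c_\dagger=s_{[L_1]}\cdots s_{[L_r]}$ in the finite Weyl group $W_\dagger$ of this lattice. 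Gabriel's theorem, in the species form for representation-finite hereditary algebras, makes $X\mapsto\dim X$ a bijection from $\ind\VV^\dagger$ onto the positive roots $\Phi_+^\dagger$. This is exactly the vertex set of the positive $c_\dagger$-cluster complex of Josuat--Verg\`es and Nadeau, which has as vertices the positive roots, or equivalently the reflections of $W_\dagger$ lying below $c_\dagger$.

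The key step is to match edges. In the positive $c_\dagger$-cluster complex, two positive roots $\alpha,\beta$ are adjacent exactly when their $c_\dagger$-compatibility degree vanishes. This is Fomin--Zelevinsky/Reading compatibility restricted to positive roots, as used in \cite{JVN2023}. I would invoke the representation-theoretic description of finite-type compatibility. For the cluster category $\CC_{\VV^\dagger}=D^b(\VV^\dagger)/(\tau^{-1}[1])$, Marsh--Reineke--Zelevinsky and BMRRT identify the $c_\dagger$-compatibility degree of $\alpha,\beta$, for $c_\dagger$ compatible with the orientation, with $\dim\Ext^1_{\CC}(M_\alpha,M_\beta)$ up to a symmetrizing scalar. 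By \eqref{eq:cluster-ext} this equals
\[
  \dim\Ext^1(M_\alpha,M_\beta)+\dim\Ext^1(M_\beta,M_\alpha)
\]
on modules. So positive roots are $c_\dagger$-compatible if and only if the corresponding modules are mutually $\Ext^1$-orthogonal. Both complexes are flag: the rigid complex by definition, and the cluster complex by Fomin--Zelevinsky. Hence the vertex bijection is a simplicial isomorphism.

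The main obstacle is the convention check. I must verify that the Coxeter element attached by BMRRT/MRZ to the orientation of $\Gamma_\dagger$ agrees with $c_\dagger$ as defined via the exceptional ordering, in the Hanson--Reading convention fixed in \cref{lem:hereditary-realization}. If instead it is $c_\dagger^{-1}$, I would need to replace the cluster category's compatibility by its mirror.

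My first attempt is to bypass the literature on compatibility degrees. Instead I would use the Hanson--Reading/Ingalls--Thomas description of the positive cluster complex as the complex whose faces are positive roots forming partial clusters inside $[1,c_\dagger]_T$ via \cref{thm:HR}. Concretely, I would take the finite analogue of \cref{prop:support-realization}: the support complex of $\VV^\dagger$ is the full $c_\dagger$-cluster complex, with shifted projectives mapping to $-\alpha_i$. Restricting to module vertices, i.e. removing the negative simples, gives the positive part. Module vertices are compatible in the support complex exactly when they are mutually $\Ext^1$-orthogonal, by \cref{lem:support-rigid}, and this settles the orientation convention uniformly with \cref{lem:negative-simple}.
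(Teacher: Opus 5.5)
Your proposal is correct, but it takes a different route from the paper's proof.

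\textbf{Your route.} You match vertices via Gabriel/Dlab--Ringel and then match edges. For the edges you import the identification of $c$-compatibility with vanishing of $\Ext^1_{\CC}$ (Marsh--Reineke--Zelevinsky, BMRRT), and you conclude by flagness.

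\textbf{The paper's route.} It matches \emph{facets} and never uses compatibility degrees or the cluster category. A rigid facet is ordered into a complete exceptional sequence (Buan--Reiten--Thomas, Lemma~2.2). Hanson--Reading's Theorem~2.2 for $\Gamma_\dagger$ then gives $c_\dagger=t_{\dim X_1}\cdots t_{\dim X_r}$. Within an exceptional sequence the symmetrized Euler pairing is $\dim\Hom-\dim\Ext^1$, so Lemma~1.1 of Buan--Reiten--Thomas turns mutual $\Ext^1$-vanishing into the nonnegative root-pairing condition of Josuat-Verg\`es--Nadeau's Definition~A.1, and back. Bongartz completion then recovers all faces from the facets.

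\textbf{What each buys.} The paper's argument works uniformly for species and uses JVN's own definition of the positive complex. It also has no orientation ambiguity, because $c_\dagger$ is by construction the Hanson--Reading Coxeter element of the exceptional ordering. Your argument is the more standard edge-level one, but it needs two further inputs. First, the valued-quiver (species) and arbitrary-$c$ forms of the compatibility theorem: MRZ treats only simply-laced quivers, so you need a species version of the BMRRT identification together with Reading's $c$-compatibility. Second, the fact that JVN's complex is the induced positive subcomplex of Reading's flag $c$-cluster complex.

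\textbf{Your convention worry.} It is harmless for the positive part. The duality $D$ sends $\operatorname{mod}\Gamma_\dagger$ to $\operatorname{mod}\Gamma_\dagger^{\mathrm{op}}$ and has the following properties:
\begin{itemize}
\item it preserves dimension vectors;
\item it turns $\Ext^1(X,Y)$ into $\Ext^1(DY,DX)$, so mutual $\Ext^1$-orthogonality is unchanged;
\item it reverses the exceptional ordering of the simples, so $\Gamma_\dagger^{\mathrm{op}}$ realizes $c_\dagger^{-1}$.
\end{itemize}
Hence the positive rigid complexes attached to $c_\dagger$ and $c_\dagger^{-1}$ coincide, and either convention yields the lemma.

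\textbf{Your fallback.} The route through a finite-type support realization also works, since in finite type every $\tau_c$-orbit meets a negative simple root. However, it re-proves the finite analogue of the paper's affine support-realization proposition, which is more than this lemma needs.
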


\begin{proof}
By \cref{lem:tilted-length}, $\VV^\dagger\simeq\operatorname{mod}\Gamma_\dagger$ is a representation-finite hereditary category of Dynkin type. Its rigid-module complex is the tilting complex: every basic rigid object admits a Bongartz completion, and the facets are the tilting objects with $r=\rank K_0(\VV^\dagger)$ indecomposable summands \cite[Section~3, especially Proposition~9]{Hubery2011}.

It remains only to match this standard hereditary model with the $c_\dagger$-convention of Josuat--Verg\`es--Nadeau. Let $S_1,\ldots,S_r$ be the simple objects of $\VV^\dagger$, put $d_i=\dim_K\End(S_i)$, and let $\rho_i$ be the unit simple roots in the standard geometric representation. The usual symmetrizable normalization
\[
  [S_i]\longmapsto \sqrt{2d_i}\,\rho_i
\]
identifies the symmetrized Euler form with the Coxeter geometric form and intertwines the simple reflections; compare the Cartan and root conventions in \cite[Sections~2.1--2.2]{HansonReading2025}. Under this normalization, mutual $\Ext^1$-orthogonality gives the nonnegative root-inner-product condition of \cite[Definition~A.1]{JVN2023}; conversely, within a complete exceptional sequence the same condition implies mutual $\Ext^1$-vanishing by \cite[Lemma~1.1]{BuanReitenThomas2011}.

Now let $\{X_1,\ldots,X_r\}$ be a rigid facet. Its summands can be ordered into a complete exceptional sequence \cite[Lemma~2.2(a)(ii)]{BuanReitenThomas2011}, and Hanson--Reading \cite[Theorem~2.2]{HansonReading2025}, applied to $\Gamma_\dagger$, gives
\[
  c_\dagger=t_{\dim X_1}\cdots t_{\dim X_r}.
\]
Conversely, the same correspondence sends a positive $c_\dagger$-cluster to a complete exceptional sequence, and the preceding pairing comparison gives mutual $\Ext^1$-vanishing. Thus the two complexes have the same facets. Every rigid face admits a Bongartz completion \cite[Section~3]{Hubery2011}, so the dimension-vector labelling identifies
\[
  \mathscr R(\VV^\dagger)\cong\Delta^+_{c_\dagger}.
\]
\end{proof}

\begin{proposition}[Dynkin block ball]\label{prop:Dynkin-ball}
The map
\begin{equation}\label{eq:tilt-map}
  \sigma(X)=\begin{cases}
    X,&\partial(X)\le0,\\
    X[-1],&\partial(X)>0
  \end{cases}
\end{equation}
identifies $K_\VV$ with the positive cluster complex of the hereditary Dynkin heart $\VV^\dagger$. Consequently $|K_\VV|$ is a closed ball and is contractible.
\end{proposition}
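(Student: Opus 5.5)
The plan is to show that $\sigma$ is a vertex bijection $\ind\Exc(\VV)\to\ind\Exc(\VV^\dagger)$ that preserves and reflects edges. Since both $K_\VV$ and $\mathscr R(\VV^\dagger)$ are flag, this makes $\sigma$ a simplicial isomorphism. Then \cref{lem:Dynkin-positive} identifies the target with $\Delta^+_{c_\dagger}$.

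First, the vertices. Since $\VV$ is Dynkin, every indecomposable of $\VV$ is exceptional. Each indecomposable lies in exactly one of $\TT$ or $\FF$ according to the sign of its ambient defect. By \cref{lem:tilted-hereditary}, $\sigma$ therefore maps $\ind\VV$ bijectively onto $\ind\VV^\dagger=\ind\FF\sqcup\ind\TT[-1]$. Because $\VV^\dagger\simeq\operatorname{mod}\Gamma_\dagger$ is Dynkin hereditary (\cref{lem:tilted-length}), all of its indecomposables are exceptional as well. Note also that $K_\VV$ is an induced subcomplex of $K_c$, so its vertex set is exactly $\ind\Exc(\VV)$.

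Second, the edges. I will compare the rectified adjacency of \cref{def:rectified} with mutual $\Ext^1_{\VV^\dagger}$-vanishing, case by case using \cref{lem:tilted-hereditary}. Throughout, the $\Ext$ and $\Hom$ groups are computed in $\VV$, which agrees with $\HH$ because $\VV$ is wide.

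\begin{itemize}
\item For two objects of $\FF$, both notions are mutual $\Ext^1_\VV$-vanishing.
\item For two objects of $\TT$, the shift by $[-1]$ preserves $\Ext^1$ on both sides, so again the conditions agree.
\item For a mixed pair $N\in\FF$, $I\in\TT$, rectified adjacency is $\Hom(N,I)=0$. On the tilted side, $\Ext^1_{\VV^\dagger}(N,I[-1])=\Hom_\VV(N,I)$ and $\Ext^1_{\VV^\dagger}(I[-1],N)=0$, so mutual $\Ext^1$-vanishing is exactly $\Hom(N,I)=0$.
\end{itemize}

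Together these show that $\sigma$ is a flag isomorphism $K_\VV\cong\mathscr R(\VV^\dagger)\cong\Delta^+_{c_\dagger}$.

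Third, topology. By Josuat-Verg\`es--Nadeau's finite theory, the positive cluster complex $\Delta^+_{c_\dagger}$ of a finite Coxeter group is a pure shellable pseudomanifold with nonempty boundary. Equivalently, it is the positive part of the simplicial sphere $\Delta_{c_\dagger}$, obtained by deleting the open star of the negative-simple facet. Its realization is therefore a closed ball of dimension $r-1$, hence contractible. Here $r\ge1$ because $\VV\neq0$.

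I expect the main obstacle to be this last citation: the ball property must be pinned to a precise statement. The most direct route is to cite the complement of the open star of a facet in a PL sphere. Using that the cluster complex is a polytopal sphere (Chapoton--Fomin--Zelevinsky / Reading), the complement of an open facet star is a PL ball. One also needs the positive part to be exactly this complement, which holds because a face is positive iff it misses every negative simple.

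A secondary point is that \cref{lem:Dynkin-positive} identifies $\mathscr R(\VV^\dagger)$ only via dimension vectors. Since the ball statement concerns only the abstract simplicial complex, this identification suffices.
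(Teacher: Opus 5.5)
Your identification of $K_\VV$ with $\mathscr R(\VV^\dagger)$ is exactly the paper's argument. The map $\sigma$ is a vertex bijection, and the four formulas of \cref{lem:tilted-hereditary} match the same-side and mixed-side tests of \cref{lem:defect-compatibility} term by term. Both complexes are flag, and \cref{lem:Dynkin-positive} then identifies the target with $\Delta^+_{c_\dagger}$.

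The error is in your topological step. The positive part is \emph{not} the complement of the open star of the negative facet $N=\{-\alpha_1,\ldots,-\alpha_r\}$. In a pure complex the open star of a facet is that facet alone. Its complement therefore still contains every proper subset of $N$ and every face that meets $N$ partially. Type $A_2$ already shows this: deleting the open edge $\{-\alpha_1,-\alpha_2\}$ from the pentagon leaves a four-edge path through $-\alpha_1$ and $-\alpha_2$. The positive part, by contrast, is the two-edge path $\alpha_1,\ \alpha_1+\alpha_2,\ \alpha_2$. Your criterion ``positive iff it misses every negative simple'' describes a different set. It is the complement of the open star of the \emph{closed} simplex $\bar N$, i.e.\ the full subcomplex on the vertices outside $N$.

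With that correction you get contractibility, but not the ball. The space $|\Delta_{c_\dagger}|\setminus|\bar N|$ deformation retracts onto the complementary full subcomplex $\Delta^+_{c_\dagger}$ by pushing along join coordinates. The complement of a closed top simplex in a polytopal sphere is an open ball. Hence $\Delta^+_{c_\dagger}$ is contractible, which is all that the later fibre theorems use.

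The assertion that $|K_\VV|$ is a \emph{closed ball} needs genuinely more input. One option is your shellable-pseudomanifold statement, properly cited, together with Danaraj--Klee. The other is what the paper does: the direct citation \cite[Proposition~A.4]{JVN2023}, which identifies the realization of the positive finite cluster complex with a closed $(r-1)$-ball.
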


\begin{proof}
The map $\sigma$ is a bijection on indecomposable objects, and the four formulas in \cref{lem:tilted-hereditary} agree term-by-term with the same-side and mixed-side compatibility conditions of \cref{lem:defect-compatibility}. Thus $K_\VV$ is identified with $\mathscr R(\VV^\dagger)$. By \cref{lem:Dynkin-positive}, this is the positive finite-type $c_\dagger$-cluster complex, and \cite[Proposition~A.4]{JVN2023} identifies its realization with a closed $(r-1)$-ball.
\end{proof}

\subsection{Affine blocks and null-root restriction}
Fix a connected affine block $\VV$ and put $e=\cox(\VV)$. Let $\delta_\VV$ be its primitive positive null root and
\[
  L_\VV=\mathbb R\text{-span}\{\dim X:X\in\ind\VV\}\subseteq V.
\]

\begin{lemma}[Null-root restriction]\label{lem:null-restriction}
There is $a>0$ such that $\delta_\VV=a\delta$ inside the ambient Grothendieck group. Hence local defect is a positive scalar multiple of ambient defect and has the same sign.
\end{lemma}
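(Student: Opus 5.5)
The plan is to use the radical computation \cref{eq:block-radical} together with the Hubery--Krause embedding of \cref{lem:block-cartan}. Write $\iota_*:K_0(\VV)\to K_0(\HH)$ for the induced map and $B_\VV$ for the symmetrized Euler form of $\VV$. Since $\iota$ is exact and fully faithful on $\Hom$ and $\Ext^1$, the Euler form of $\VV$ is the restriction of the ambient Euler form $E_{c^{-1}}$ along $\iota_*$. In particular $B_\VV$ is the pullback of $B$. Because $\VV$ is affine, $B_\VV$ is positive semidefinite of corank one with radical $\mathbb R\delta_\VV$. The vector $\iota_*\delta_\VV\in L_\VV$ therefore satisfies $B(\iota_*\delta_\VV,\iota_*\delta_\VV)=0$. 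As noted after \cref{eq:block-radical}, positive semidefiniteness of $B$ then forces $\iota_*\delta_\VV\in\rad B=\mathbb R\delta$, so $\delta_\VV=a\delta$ for some real $a$. We also need $a\ne0$. This follows because $\iota_*$ is injective on $K_0(\VV)$: the classes of a complete exceptional sequence of $\VV$ map to classes of an exceptional sequence of $\HH$, and these are linearly independent by \cref{thm:HR}(c) together with the rank statement.

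Next we fix the sign of $a$. The vector $\delta_\VV$ is a nonnegative nonzero combination of the classes of the local simple objects. Each local simple is an object of $\HH$, so its class is a nonzero positive vector. Hence $\iota_*\delta_\VV$ is a nonzero nonnegative vector in $K_0(\HH)$, and comparing with the positive vector $\delta$ gives $a>0$. Since $\delta$ is primitive and $\delta_\VV$ is an integral vector, $a$ is in fact a positive integer, though positivity is all we need.

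For the defect statement, the local defect of $X\in\VV$ is computed with the local Euler form $E_\VV$, normalized as in \cref{eq:defect}, namely $\partial_\VV(X)=E_\VV(\delta_\VV,\dim_\VV X)$. This equals $E_{c^{-1}}(\iota_*\delta_\VV,\dim X)=a\,\partial(X)$, because $\iota$ preserves $\Hom$ and $\Ext^1$ and hence the nonsymmetrized Euler form.

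The main obstacle is this last identification of conventions. We must check that the local Euler form of $\VV$, in the Hanson--Reading convention attached to $e=\cox(\VV)$ via \cref{lem:block-cartan}, is literally the restriction of $E_{c^{-1}}$ and not its transpose. A transposed form would flip the sign of the defect. To settle this, I would use the fact that $\VV\simeq\operatorname{mod}\Lambda_\VV$ with local simples $L_1,\dots,L_r$ in exceptional order, and that in both $\VV$ and $\HH$ the form is $\langle X,Y\rangle=\dim\Hom(X,Y)-\dim\Ext^1(X,Y)$ in the same argument order. With that, the same-sign conclusion holds for all objects, and in particular it matches the preprojective, regular and preinjective trichotomy \cref{eq:tame-trichotomy}.
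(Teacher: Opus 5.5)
Your proposal is correct and follows essentially the paper's argument: positive semidefiniteness forces the null vector $\iota_*\delta_\VV$ into $\rad B=\mathbb R\delta$, the sign $a>0$ comes from the nonnegative ambient coordinates of the local simple classes, and the defect comparison is the preservation of the nonsymmetric Euler form under the exact wide embedding. Your separate injectivity argument for $a\ne0$ is not needed, and its appeal to \cref{thm:HR}(c) is loose, because your positivity step already shows that $\iota_*\delta_\VV$ is a nonzero nonnegative vector.
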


\begin{proof}
By the radical computation \eqref{eq:block-radical} for connected blocks,
\[
  \rad(B|_{L_\VV})=L_\VV\cap\mathbb R\delta.
\]
Since $\VV$ is affine, its radical is generated by $\delta_\VV$, so $\delta_\VV=a\delta$ for some nonzero scalar $a$. The local null root is a positive linear combination of local simple classes, each having nonnegative ambient simple-root coordinates, hence $a>0$. Because the exact wide embedding preserves the nonsymmetric Euler form,
\[
  E_\VV(\delta_\VV,\dim X)=aE(\delta,\dim X),
\]
so local and ambient defects have the same sign.
\end{proof}

\begin{proposition}[Ambient--local affine comparison]\label{prop:affine-comparison}
Let $\Phi_\VV$ be the affine root system of the generalized Cartan lattice $K_0(\VV)$ from \cref{lem:block-cartan}. Then the identity on exceptional modules and the local half-orbit labelling give simplicial isomorphisms
\[
  K_\VV\cong\Delta_e^{\mathrm{exc}}(\VV)\cong\Delta_e^{\re}(\Phi_\VV).
\]
\end{proposition}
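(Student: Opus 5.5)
The plan is to prove the two isomorphisms separately. The second, $\Delta_e^{\mathrm{exc}}(\VV)\cong\Delta_e^{\re}(\Phi_\VV)$, is the general clause of \cref{thm:rectified} applied to the realization $\mathcal A=\VV$. By \cref{lem:block-cartan} and \cref{thm:HR}(d), $\VV$ is a basic connected tame hereditary module category. Its local simples, in an exceptional ordering, determine the Coxeter element. This element maps to $e=\cox(\VV)$, and the root system is $\Phi_\VV$. If needed I would pass to a basic algebra Morita equivalent to $\Lambda_\VV$; this changes nothing exceptional-theoretic.

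The real work is the first isomorphism $K_\VV\cong\Delta_e^{\mathrm{exc}}(\VV)$, where the vertex map is the identity on $\ind\Exc(\VV)$. Both complexes are flag. The ambient complex $K_c$ is flag by definition, so the induced subcomplex $K_\VV$ is flag too. Hence it suffices to compare edges, and I will go through the cases given by the defect sign of each vertex. The key input is \cref{lem:null-restriction}: the local defect $E_\VV(\delta_\VV,-)$ is a positive multiple of the ambient defect $\partial$. So the local preprojective/regular/preinjective trichotomy \eqref{eq:tame-trichotomy} agrees with the ambient one on objects of $\VV$, and the "same side" versus "mixed" partition of pairs is identical in the two definitions.

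It remains to check that the edge criteria agree once the cases match.
\begin{itemize}
\item \emph{Same-side pairs.} The criterion is mutual vanishing of $\Ext^1$. Since $\VV\subseteq\HH$ is an exact extension-closed wide subcategory, $\Ext^1_\VV=\Ext^1_\HH$ on objects of $\VV$ (Yoneda $\Ext^1$ computed inside $\VV$).
\item \emph{Mixed pairs.} The criterion is $\Hom(N,I)=0$, and $\VV$ is a full subcategory, so Hom is the same in $\VV$ and in $\HH$.
\end{itemize}
Hence \cref{def:rectified} applied to $\VV$ gives exactly the induced edges of $K_c$ on $\ind\Exc(\VV)$. One also needs the vertex sets to agree: an indecomposable of $\VV$ is exceptional in $\VV$ iff it is exceptional in $\HH$, because End and $\Ext^1$ are computed identically.

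The main obstacle is making sure that \cref{def:rectified} for $\VV$ really uses the local defect, with the local Euler form $E_{e^{-1}}$ and local null root. The subtle point is the sign convention, i.e.\ that local preinjectives have positive local defect. I would verify this by noting that the nonsymmetric Euler form is preserved by the exact embedding; this is already used in \cref{lem:null-restriction}. I would also check that the Hanson--Reading normalization of the local realization ordered by $e$ gives the form $E_{e^{-1}}$, by \cref{lem:hereditary-realization} applied locally.

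A secondary point is that \cref{thm:rectified} needs $\VV$ to realize an affine root datum whose root system is $\Phi_\VV$ as defined through \cref{lem:block-cartan}. Granting Hubery--Krause, these agree. Composing the two isomorphisms, the combined vertex map is $X\mapsto\eta_e^{-1}(\dim X)$ computed with the local half-orbit labelling, as claimed.
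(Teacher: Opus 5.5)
Your proposal is correct and follows the paper's proof essentially verbatim: the first isomorphism comes from \cref{lem:null-restriction} (local and ambient defects have the same sign) together with fullness and extension-closedness of $\VV\subseteq\HH$ (so Hom and $\Ext^1$ agree) and flagness, and the second is the general clause of \cref{thm:rectified} applied to $\VV$, with \cref{lem:block-cartan} identifying the local root system and Coxeter element with $\Phi_\VV$ and $e$. Your extra checks (agreement of vertex sets, passage to a basic algebra, and the fact that the local Euler form is the restriction of the ambient one, which settles your sign-convention worry) are harmless refinements that the paper leaves implicit.
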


\begin{proof}
By \cref{lem:null-restriction}, local and ambient defects have the same sign. Because $\VV\hookrightarrow\HH$ is full, exact and extension-closed,
\[
  \Hom_\VV(X,Y)=\Hom_\HH(X,Y),\qquad
  \Ext^1_\VV(X,Y)=\Ext^1_\HH(X,Y).
\]
Thus the local and ambient compatibility tests of \cref{lem:defect-compatibility} coincide on $\ind\Exc(\VV)$, giving the first isomorphism. The second is the general realization form of \cref{thm:rectified}, applied to the connected tame hereditary category $\VV$; \cref{lem:block-cartan} identifies its local affine root system and Coxeter element with $\Phi_\VV$ and $e=\cox(\VV)$.
\end{proof}

\begin{theorem}[Principal fibre contractibility]\label{thm:principal-contractible}
For every $1\ne w\in P_c$, the principal fibre $K_w$ is contractible.
\end{theorem}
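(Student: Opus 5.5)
The plan is to reduce contractibility of $K_w$ to contractibility of its block factors via the join decomposition, and then treat the Dynkin and affine blocks separately.

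First, decompose $\WW_w=\VV_1\oplus\cdots\oplus\VV_s$ into connected exact blocks as in \cref{eq:block-decomp}. Since $w\ne1$, $\WW_w\ne0$ and $s\ge1$. By \cref{thm:embedded-join}, $K_w=K_{\VV_1}*\cdots*K_{\VV_s}$ as face sets inside $K_c$, so $|K_w|$ is the join of the realizations $|K_{\VV_i}|$. It therefore suffices to show that some factor is contractible and that all factors are nonempty. A join with a contractible factor is contractible, and in the weak topology this is standard for CW/simplicial joins. Nonemptiness is immediate, because each block contains at least one exceptional simple object, which is a vertex of $K_{\VV_i}$.

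Next, classify the blocks. By the discussion following \cref{eq:block-radical}, each block is either Dynkin or affine, and at most one is affine.

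For a Dynkin block, \cref{prop:Dynkin-ball} identifies $K_{\VV_i}$ with the positive finite cluster complex of the tilted heart. Its realization is a closed ball, hence contractible.

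For an affine block, \cref{prop:affine-comparison} gives $K_{\VV_i}\cong\Delta_e^{\re}(\Phi_{\VV_i})$ with $e=\cox(\VV_i)$. Appendix~\ref{app:topology} asserts that the real affine Reading--Stella complex is contractible in the weak topology. I would invoke it for the local affine root system $\Phi_{\VV_i}$ and local Coxeter element $e$, which \cref{lem:block-cartan} identifies as an honest affine Cartan datum with Coxeter element. So every factor is contractible, and in particular at least one is. Hence $K_w$ is contractible.

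The main obstacle is applying the appendix to the local datum rather than the ambient one. The block $\VV_i$ realizes a connected affine datum that may be non-simply-laced or of a different type from $W$. The appendix statement therefore has to be available for every irreducible crystallographic affine datum with an arbitrary Coxeter element, including affine rank two, where the complex is the infinite path indexed by $T$.

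I would first check that \cref{thm:rectified} was stated for arbitrary tame hereditary realizations, which it was, so that the identification is legitimate. I would then confirm that the appendix's argument, the numerable good cover comparing the weak topology with the fan support, which is a convex region or half-space complement, uses only the Reading--Stella axioms valid uniformly over such data. If the appendix only covers the ambient case, the fallback is to note that its proof depends only on the affine root datum, which is exactly what \cref{lem:block-cartan} supplies.
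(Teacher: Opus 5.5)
Your proposal is correct and follows the paper's proof essentially verbatim: the join decomposition of \cref{thm:embedded-join}, contractibility of Dynkin factors via \cref{prop:Dynkin-ball}, and contractibility of affine factors via \cref{prop:affine-comparison} together with Theorem~\ref{thm:appendix-contractibility}. Your worry about the local datum is already settled, since the appendix theorem is stated for an arbitrary irreducible symmetrizable affine root system and an arbitrary Coxeter element; also, nonemptiness of every factor is not actually needed, because a join with a single contractible factor is already contractible.
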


\begin{proof}
By \cref{thm:embedded-join}, $K_w$ is the join of the subcomplexes attached to the connected blocks of $\WW_w$. The subcomplexes attached to Dynkin blocks are contractible by \cref{prop:Dynkin-ball}; those attached to affine blocks are identified by \cref{prop:affine-comparison} with real affine complexes and are contractible by Theorem~\ref{thm:appendix-contractibility}. Since $w\ne1$, the join has a nonempty factor.
\end{proof}

\subsection{Coxeter--categorical component correspondence}\label{sec:complete-components}

For $w\in P_c$, set
\[
  T(w)=\{t\in T:t\le_Tw\},\qquad W(w)=\langle T(w)\rangle.
\]
Write the connected block decomposition
\[
  \WW_w=\VV_1\oplus\cdots\oplus\VV_s
\]
as in \cref{eq:block-decomp}, and put $e_i=\cox(\VV_i)$.

\begin{proposition}[Coxeter components of categorical blocks]\label{prop:component-categorical}
One has
\[
  T(w)=\bigsqcup_{i=1}^s T(e_i),
  \qquad
  W(w)=W(e_1)\times\cdots\times W(e_s).
\]
The factors $W(e_i)$ are precisely the connected components of Dyer's canonical Coxeter system of $W(w)$. Moreover,
\[
  w=e_1\cdots e_s=e_{\sigma(1)}\cdots e_{\sigma(s)}
\]
for every permutation $\sigma$, and $W(e_i)$ is finite exactly when $\VV_i$ is Dynkin and affine exactly when $\VV_i$ is affine.
\end{proposition}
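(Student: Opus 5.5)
We will pass through the categorical side using the Hanson--Reading isomorphism $\cox:\eWide(\HH)\to[1,c]_T$ of \cref{thm:HR}(b). The aim is to translate each claim about $T(w)$ and $W(w)$ into a claim about rank-one wide subcategories of $\WW_w$.

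\emph{Step 1: the reflections below $w$.} A reflection $t\le_Tw$ equals $\cox(\WW_t)$ for a rank-one exceptional wide subcategory $\WW_t\subseteq\WW_w$. Such a $\WW_t$ is $\add X$ for an exceptional indecomposable $X$. Since $X$ is indecomposable, it lies in exactly one block $\VV_i$. Hence $t=t_{\dim X}\le_T e_i$, and $T(w)=\bigcup_iT(e_i)$. The union is disjoint because \cref{thm:HR}(b) is a poset isomorphism: if $t$ lay below both $e_i$ and $e_j$, then $\WW_t$ would be contained in $\VV_i\cap\VV_j=0$.

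\emph{Step 2: commuting factors and the factorization of $w$.} First choose an exceptional ordering of each block. Concatenating these orderings in any order gives an exceptional sequence of $\WW_w$, by the two-sided orthogonality \cref{eq:block-orth}. Its Coxeter product is $e_{\sigma(1)}\cdots e_{\sigma(s)}$, and by \cref{thm:HR} this product equals $\cox(\WW_w)=w$. Next we show that the factors commute. For $X\in\VV_i$ and $Y\in\VV_j$ with $i\ne j$, all Hom and $\Ext^1$ groups vanish in both directions. Therefore both Euler pairings vanish, so the symmetrized form gives $B(\dim X,\dim Y)=0$. Consequently $t_{\dim X}$ and $t_{\dim Y}$ commute. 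Since each $W(e_i)$ is generated by $T(e_i)$, the subgroups $W(e_i)$ pairwise commute, and $W(w)$ is generated by them. To obtain a direct product we need $W(e_i)\cap\prod_{j\ne i}W(e_j)=1$. We plan to derive this from the geometric representation: each $W(e_i)$ acts trivially on the orthogonal complement of $L_i$, and the spans $L_i$ are pairwise $B$-orthogonal with trivial pairwise intersections. One caveat is that $B$ is degenerate along $\delta$, so faithfulness must be checked on $K_0(\HH)$ itself. We intend to argue via the linear action on the lattice: $W(e_i)$ preserves $L_i$ and fixes the other $L_j$ pointwise. An element lying in the intersection therefore acts trivially on $\sum_jL_j$, and it also acts trivially on the quotient by $\sum_jL_j$. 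Alternatively, we will simply invoke Dyer's theorem once the canonical generators are identified.

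\emph{Step 3: Dyer's canonical components.} This is the step we expect to be the main obstacle. Dyer's canonical simple reflections $\chi(W(w))$ lie in $W(w)\cap T=T(w)$. By Step 1, each of them lies in a unique $T(e_i)$. Canonical generators lying in different $W(e_i)$ commute, so the Coxeter graph has no edges between the pieces. Applying Dyer's theorem to each $W(e_i)$ separately, the canonical system of $W(w)$ is the union of the canonical systems of the $W(e_i)$. It remains to show that each $W(e_i)$ is irreducible. For this we use \cref{lem:block-cartan}: the embedding $W_{\VV_i}\hookrightarrow W$ identifies $W(e_i)$ with the Weyl group of the connected generalized Cartan lattice $K_0(\VV_i)$. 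Here we need that the reflection subgroup generated by $T(e_i)$ is the image of the full local Weyl group. This holds because the local simple reflections are products of elements of $T(e_i)$ and, conversely, every local reflection below the local Coxeter element maps into $T(e_i)$. A connected Cartan datum then gives an irreducible Coxeter group. The same identification settles the type: a Dynkin block has a positive definite form and hence a finite Weyl group, while an affine block has an affine Weyl group by \cref{lem:null-restriction}. In particular $W(e_i)$ is finite exactly when $\VV_i$ is Dynkin.
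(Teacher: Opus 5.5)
Your outline follows the paper's route: Step~1, the concatenation argument for $w=e_{\sigma(1)}\cdots e_{\sigma(s)}$, and commutation via $B$-orthogonality are essentially the paper's own arguments. However, Step~3 rests on a false identity. You write $\chi(W(w))\subseteq W(w)\cap T=T(w)$ and then use Step~1 to put each canonical generator into a unique $T(e_i)$. The equality $W(w)\cap T=T(w)$ fails in general. Take $w=c$ in affine rank at least three. Every simple reflection lies below $c$, so $W(c)=W$ and $W(c)\cap T=T$. Yet only the finitely many horizontal reflections attached to the roots $\beta^{(p)}$, $1\le p<r_\beta$, lie below $c$ (proof of \cref{prop:positive-vertices}), while $W$ has infinitely many horizontal reflections.

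Dyer's theorem only gives $\chi(W(w))\subseteq W(w)\cap T$, and Step~1 says nothing about reflections of $W(w)$ that are not below $w$. What is actually needed is
\[
W(w)\cap T=\bigsqcup_i\bigl(W(e_i)\cap T\bigr).
\]
The paper deduces this from the direct product. If $t=g_1\cdots g_s$ with $g_i\in W(e_i)$, then $t|_{L_i}=g_i|_{L_i}$. So if two components $g_i,g_j$ were nontrivial, faithfulness of the local representations would put nonzero vectors of both $L_i$ and $L_j$ into $(t-1)V$, and the fixed space of $t$ would have codimension at least two. Then the sets $S_i=S_w\cap W(e_i)$ partition $S_w$ and commute across different $i$. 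Projection onto the $i$th factor shows that $S_i$ generates $W(e_i)$. This also replaces your unproved claim that the canonical system of $W(w)$ is the union of those of the $W(e_i)$.

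Two further steps need repair.

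\emph{Direct product in Step~2.} An element acting trivially on $\sum_jL_j$ and on $V/\sum_jL_j$ is only unipotent, not necessarily the identity. The correct observation is that for $g\in W(e_i)\cap\prod_{j\ne i}W(e_j)$ the image $(g-1)V$ lies in both $L_i$ and $\sum_{j\ne i}L_j$. These meet only in $0$ because $K_0(\WW_w)=\bigoplus_jK_0(\VV_j)$ embeds in $K_0(\HH)$, and faithfulness of $W$ on $V$ then gives $g=1$. The paper instead restricts $g_1\cdots g_s=1$ to $L_i$ and uses faithfulness of the local geometric representation.

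\emph{Irreducibility in Step~3.} Connectedness of the local Cartan datum gives irreducibility with respect to the local simple reflections. The proposition, however, concerns Dyer's generators $S_i$, and irreducibility is a property of the generating set rather than of the abstract group: the dihedral group of order $12$ is of type $I_2(6)$ and also of type $A_1\times A_2$. The paper transfers it as follows. If $(W(e_i),S_i)$ split as $A\times B$, every element of $W(e_i)\cap T$ would lie in $A$ or in $B$. The local simple reflections are such elements, and both kinds occur because they generate $W(e_i)$. Reflections from different factors commute, contradicting connectedness of the local diagram.

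\emph{Identifying $W(e_i)$ with the local Weyl group.} This requires $T(e_i)$ to lie in the image of the local Weyl group. That holds because each $t\le_Te_i$ equals $t_{\dim X}$ for an exceptional $X\in\VV_i$. The inclusion you state goes the other way.
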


\begin{proof}
Let $t\le_Tw$. The rank-one case of \cref{thm:HR} gives an exceptional module $X$ whose rank-one wide closure lies in $\WW_w$. The indecomposable $X$ belongs to a unique block $\VV_i$, so $t\le_Te_i$; the converse follows from $\VV_i\subseteq\WW_w$. Hence
\[
  T(w)=\bigsqcup_i T(e_i),
\]
and the corresponding root spans are pairwise orthogonal by the Hom--Ext orthogonality of distinct blocks.

By \cref{lem:block-cartan}, the local Weyl group of $\VV_i$ embeds in $W$ and equals $W(e_i)$: its simple reflections lie below $e_i$, while every reflection below $e_i$ comes from a rank-one exceptional wide subcategory of $\VV_i$. Thus the factors jointly generate $W(w)$. Their orthogonal root spans make them commute and act trivially on one another's spans. If $g_1\cdots g_s=1$ with $g_i\in W(e_i)$, restriction to the $i$th span shows that $g_i$ acts trivially there; faithfulness of the local geometric representation gives $g_i=1$. Hence the product is direct.

It remains to identify the canonical components. Let $S_w$ be Dyer's canonical simple-reflection set. By \cite[Theorem~3.3]{Dyer1990}, the reflections of $(W(w),S_w)$ are exactly $W(w)\cap T$. In the above orthogonal direct product, an ambient reflection lies in a unique factor: if it had nontrivial components in two orthogonal factors, its fixed-space codimension would be at least two. Hence
\[
  W(w)\cap T=\bigsqcup_i\bigl(W(e_i)\cap T\bigr).
\]
Set $S_i=S_w\cap W(e_i)$. Then $S_w=\bigsqcup_iS_i$, different $S_i$ commute, and projection to the $i$th factor shows that $S_i$ generates $W(e_i)$. If the canonical graph on some $S_i$ were disconnected, then $W(e_i)=A\times B$ for two nontrivial standard parabolic factors. Every reflection in a connected local simple system of $W(e_i)$ lies in $A$ or $B$, and both alternatives occur because that system generates $W(e_i)$. Reflections from different factors commute, contradicting connectedness. Thus each $S_i$ is connected, so the $W(e_i)$ are exactly the connected canonical components.

Finally, concatenating complete exceptional sequences from the blocks in any order and applying \cref{thm:HR} gives the product formula for $w$. The local Cartan form is positive definite for a Dynkin block and positive semidefinite of corank one for an affine block, giving the finite/affine characterization.
\end{proof}

\begin{corollary}[Finite component as a Dynkin join factor]\label{cor:finite-component-join}
If $e$ is an irreducible complete finite Coxeter component of $w$, then $e=e_j$ for a unique Dynkin block $\VV_j$. Writing
\[
  u=\prod_{i\ne j}e_i,
\]
one has
\[
  w=eu=ue,
  \qquad
  K_w=K_e*K_u
\]
as an equality of face sets inside $K_c$.
\end{corollary}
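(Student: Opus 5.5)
The plan is to match $e$ with a block of the connected decomposition of $\WW_w$ via \cref{prop:component-categorical}, and then read off both the factorization of $w$ and the join from the block structure.

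\textbf{Step 1: identifying the block.} By \cref{def:complete-component}, $W(e)$ is a finite irreducible connected component of Dyer's canonical Coxeter system of $W(w)$. By \cref{prop:component-categorical}, these components are exactly the factors $W(e_1),\dots,W(e_s)$. Hence $W(e)=W(e_j)$ for some $j$, and $j$ is unique because the product is direct and $W(e)\neq1$. Since $W(e_j)$ is finite, the same proposition shows that $\VV_j$ is a Dynkin block.

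It remains to see $e=e_j$. In \cref{def:complete-component}, $e$ is the Coxeter element of this component in the component factorization of $w$. \Cref{prop:component-categorical} gives exactly such a factorization, $w=e_1\cdots e_s$ with $e_i\in W(e_i)$. Projecting $w$ onto the factor $W(e_j)$ of the direct product $W(w)=\prod_i W(e_i)$ recovers both $e$ and $e_j$, so $e=e_j$. This uniqueness of the factor coordinate is the one point that needs care. It rests on directness of the product, which \cref{prop:component-categorical} established.

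\textbf{Step 2: the factorization.} Put $u=\prod_{i\ne j}e_i$. The commutation $w=e_{\sigma(1)}\cdots e_{\sigma(s)}$ for every permutation $\sigma$, from \cref{prop:component-categorical}, gives $w=eu=ue$.

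\textbf{Step 3: the join.} First identify $\WW_u$ and $\WW_e$. Concatenating complete exceptional sequences of the blocks $\VV_i$ with $i\ne j$ gives an exceptional sequence whose Coxeter product is $u$. By \cref{thm:HR}(c), its wide closure $\bigoplus_{i\ne j}\VV_i$ has $\cox$ equal to $u$. Since $\cox$ is a bijection by \cref{thm:HR}(b), $\WW_u=\bigoplus_{i\ne j}\VV_i$. The same argument gives $\WW_e=\VV_j$.

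Now apply \cref{thm:embedded-join} twice:
\[
  K_w=K_{\VV_1}*\cdots*K_{\VV_s},
  \qquad
  K_u=\mathop{\ast}_{i\ne j}K_{\VV_i}.
\]
We also have $K_e=K_{\VV_j}$, by \cref{prop:principal-induced} together with \cref{eq:block-complex}. Regrouping the join therefore gives $K_w=K_e*K_u$ as face sets in $K_c$. If $s=1$, then $u=1$, $K_u=\{\varnothing\}$, and the statement is trivial.
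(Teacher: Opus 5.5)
Your proof is correct and follows essentially the same route as the paper. Both identify $e=e_j$ via \cref{prop:component-categorical}, obtain $w=eu=ue$ from commutation of the block Coxeter elements, identify $\WW_u=\bigoplus_{i\ne j}\VV_i$ through \cref{thm:HR}, and apply \cref{thm:embedded-join} to $\WW_w$ and $\WW_u$; you are slightly more explicit than the paper about the direct-product projection giving $e=e_j$ and about $K_e=K_{\VV_j}$ via \cref{prop:principal-induced}.
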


\begin{proof}
By \cref{prop:component-categorical}, the canonical components are the $W(e_i)$, and the finite ones are exactly those attached to Dynkin blocks; hence $e=e_j$ for a unique such block. The factors commute, so $w=eu=ue$. By \cref{thm:HR}, $\cox(\bigoplus_{i\ne j}\VV_i)=u$, and hence $\WW_u=\bigoplus_{i\ne j}\VV_i$. If $u=1$, then $\WW_u=0$ and $K_u=\{\varnothing\}$, so the join identity is immediate. If $u\ne1$, applying \cref{thm:embedded-join} to the decompositions of $\WW_w$ and $\WW_u$ gives $K_w=K_e*K_u$ inside the ambient complex $K_c$.
\end{proof}

\section{The affine fibre dichotomy}\label{sec:fibres}

For $b\in M_c$, define
\[
  \operatorname{Div}_{R,c}(b)=\{w\in P_c:w\preccurlyeq_Rb\},\qquad
  \operatorname{Max}_{R,c}(b)=\operatorname{Max}_{\preccurlyeq_R}\operatorname{Div}_{R,c}(b).
\]
Every interval right divisor of $b$ lies below a maximal one because $P_c$ has finite height. Consequently
\begin{equation}\label{eq:fibre-union}
  K_b=\bigcup_{w\in\operatorname{Max}_{R,c}(b)}K_w:
\end{equation}
indeed, a face $F\in K_b$ has $\omega(F)\in P_c$ below some maximal right divisor $w$, while the reverse inclusion follows from transitivity.

\begin{theorem}[Affine fibre dichotomy]\label{thm:fibre-dichotomy}
For every $1\ne b\in M_c$, exactly one of the following holds.
\begin{enumerate}[label=\textup{(\roman*)}]
\item The element $b$ has a greatest $P_c$-right divisor $d$, and $K_b=K_d$.
\item The element $b$ has no greatest $P_c$-right divisor. Then the maximal $P_c$-right divisors share a common nonidentity irreducible complete finite Coxeter component $e$. Writing each maximal divisor as $w=eu_w=u_we$ and setting
\[
  L_b=\bigcup_{w\in\operatorname{Max}_{R,c}(b)}K_{u_w},
\]
one has
\[
  K_b=K_e*L_b
\]
as an equality of ambient face sets.
\end{enumerate}
\end{theorem}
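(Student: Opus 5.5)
The plan is to start from the union formula \eqref{eq:fibre-union}, $K_b=\bigcup_{w\in\operatorname{Max}_{R,c}(b)}K_w$, and split according to whether $\operatorname{Max}_{R,c}(b)$ is a singleton. The two alternatives are mutually exclusive by definition, since a greatest divisor exists exactly when there is a unique maximal one; every divisor lies below a maximal one by finite height. In case (i), $\operatorname{Max}_{R,c}(b)=\{d\}$, so the union collapses to $K_b=K_d$. Nothing else is needed there.

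For case (ii), I first need to transport the divisor hypothesis. By \cref{thm:interval-embedding}, $M_c\cong G_W^+$, so $P_c$-right divisibility in $M_c$ agrees with right divisibility in $G_W^+$. In affine rank two, \cref{rem:rank-two} says a greatest divisor always exists, so case (ii) cannot occur and we may assume rank at least three. Then \cref{thm:common-core} gives a nonidentity irreducible complete finite Coxeter component $e$ common to all $w\in\operatorname{Max}_{R,c}(b)$. For each such $w$, \cref{cor:finite-component-join} gives:
\begin{itemize}
\item $e=e_j$ for a unique Dynkin block of $\WW_w$;
\item $u_w=\prod_{i\ne j}e_i$ with $w=eu_w=u_we$;
\item $K_w=K_e*K_{u_w}$ as face sets in $K_c$.
\end{itemize}
Here $K_e=K_c[\ind\Exc(\WW_e)]$ depends only on $e$, because $\WW_e=\cox^{-1}(e)$ by \cref{thm:HR}. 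So the first join factor is literally the same subcomplex for every $w$.

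With that in hand, the union becomes
\[
K_b=\bigcup_w (K_e*K_{u_w}).
\]
The inclusion $\bigcup_w(K_e*K_{u_w})\subseteq K_e*L_b$ is immediate. For the reverse inclusion, take a face $G\sqcup H$ with $G\in K_e$ and $H\in K_{u_w}$ for some $w$. It lies in $K_e*K_{u_w}=K_w\subseteq K_b$, provided the join operation is taken as disjoint union of faces inside $K_c$. That proviso needs $\ind\Exc(\WW_e)$ to be disjoint from every vertex set of $L_b$. This holds because $\WW_e$ and $\WW_{u_w}$ are distinct orthogonal blocks of $\WW_w$, so no vertex of any $K_{u_w}$ lies in $\WW_e$. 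Therefore $K_b=K_e*L_b$ as ambient face sets.

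The main obstacle is the bookkeeping in this last step. The join $K_e*L_b$ must make sense inside $K_c$, and "$e$ is common" must mean the same Dynkin block subcategory $\WW_e$ in every decomposition. Both follow from the canonical nature of $\cox^{-1}$, which identifies $\WW_e$ as a fixed exact summand of each $\WW_w$.
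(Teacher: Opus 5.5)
Your proposal is correct and follows the paper's proof essentially step for step. Both arguments apply the union formula \eqref{eq:fibre-union}, exclude rank two via \cref{rem:rank-two}, obtain the common component $e$ from \cref{thm:common-core}, apply \cref{cor:finite-component-join} to each maximal divisor, and distribute the union over the common join factor $K_e$. Your explicit check that $\ind\Exc(\WW_e)$ is disjoint from the vertex sets of the $K_{u_w}$, by orthogonality of distinct blocks, is a slightly more careful version of the paper's remark that the same $K_e$ sits inside the common ambient complex.
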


\begin{proof}
Use \cref{thm:interval-embedding} to regard $b$ in the positive interval-group image. If a greatest divisor exists, \eqref{eq:fibre-union} gives the first case. Otherwise rank two is excluded by \cref{rem:rank-two}; in rank at least three \cref{thm:common-core} gives a common complete finite component $e$. For each maximal $w$, \cref{cor:finite-component-join} gives $K_w=K_e*K_{u_w}$ as an ambient face-set equality. Since the same factor $K_e$ occurs inside the common ambient complex $K_c$, taking unions gives
\[
  K_b=\bigcup_w(K_e*K_{u_w})
      =K_e*\left(\bigcup_w K_{u_w}\right)
      =K_e*L_b.
\]
Moreover, $L_b$ is a simplicial subcomplex: a face in the union belongs to some $K_{u_w}$, and all of its subfaces belong to that same subcomplex.
\end{proof}

\begin{corollary}[Contractibility of homogeneous fibres]\label{thm:fibre-contractible}
For every $1\ne b\in M_c$, the homogeneous fibre $K_b$ is contractible.
\end{corollary}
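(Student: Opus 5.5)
The plan is to feed the dichotomy of \cref{thm:fibre-dichotomy} into the two contractibility mechanisms already available: principal-fibre contractibility and the join with a contractible factor. Fix $1\ne b\in M_c$.

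In case (i), $b$ has a greatest $P_c$-right divisor $d$ and $K_b=K_d$. We check that $d\ne1$. Since $b\ne1$ and $M_c$ is generated by reflections (\cref{lem:interval-degree}), some reflection $t$ right divides $b$. Then $t\in\operatorname{Div}_{R,c}(b)$, so $t\preccurlyeq_R d$ and hence $d\ne1$. \Cref{thm:principal-contractible} then shows that $K_d$ is contractible.

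In case (ii), $K_b=K_e*L_b$ as face sets inside $K_c$, where $e$ is a nonidentity irreducible complete finite Coxeter component. By \cref{cor:finite-component-join}, $e=\cox(\VV_j)$ for a Dynkin block $\VV_j$. Moreover, $K_e$ is the principal fibre of $e$, which equals $K_{\VV_j}$ because $\WW_e=\VV_j$ is connected. By \cref{prop:Dynkin-ball}, $|K_e|$ is a closed ball of dimension $\ell_T(e)-1\ge0$, so it is nonempty and contractible.

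The key topological point is that the geometric realization of a simplicial join is homeomorphic to the join of the realizations. We need this for weak topologies and possibly infinite $L_b$. Since $K_e$ is a finite complex, $|K_e*L_b|\cong|K_e|*|L_b|$ holds in the weak topology; compactness of the finite factor avoids product-topology issues. The join of a contractible space with any space is contractible, even when $L_b$ is empty (possibly $L_b=\{\varnothing\}$), since then $K_b=K_e$. Concretely, a contraction of $|K_e|$ to a point $p$ extends to a deformation of $|K_e*L_b|$ onto the cone $p*|L_b|$, which is contractible.

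We expect the main obstacle to be justifying this join identification and homotopy cleanly in the weak topology. To handle it, we will work simplicially: $K_e$ is a finite collapsible-type ball, so $K_e*L_b$ is a union of cones. Alternatively, we will use Whitehead's theorem: every finite subcomplex of $K_b$ lies in $K_e*L'$ with $L'$ finite, which is contractible. Hence all homotopy groups of $|K_b|$ vanish, and since $|K_b|$ is a CW complex it is contractible.
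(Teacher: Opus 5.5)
Your proposal is correct and follows essentially the same route as the paper: you apply \cref{thm:fibre-dichotomy}, use \cref{thm:principal-contractible} in the principal case, and in the nonprincipal case use the nonempty contractible Dynkin ball $K_e$ from \cref{prop:Dynkin-ball} as a join factor. Your two extra checks fill in details the paper leaves implicit: that $d\ne1$ (because some reflection right divides $b$), and that the join is contractible in the weak topology (via finite subcomplexes and Whitehead's theorem).
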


\begin{proof}
By \cref{thm:fibre-dichotomy}, either $K_b=K_d$ for a principal fibre, which is contractible by \cref{thm:principal-contractible}, or $K_b=K_e*L_b$ with $e$ corresponding to a Dynkin block. In the latter case $K_e$ is nonempty and contractible by \cref{prop:Dynkin-ball}, so the join is contractible.
\end{proof}

\section{Weighted-face complexes and the Koszul resolution}\label{sec:weighted}

\subsection{A general weighted-face criterion}

Let $M=\bigsqcup_{d\ge0}M_d$ be a positively graded right-cancellative monoid with $M_0=\{1\}$ and let $A=\kk[M]$. Let $\mathcal K$ be an abstract simplicial complex.

\begin{definition}[Face weight]
A face weight is a map $\omega:\operatorname{Faces}(\mathcal K)\to M$ such that
\begin{equation}\label{eq:weight-axioms}
  \omega(\varnothing)=1,\qquad \deg\omega(F)=|F|,\qquad G\subseteq F\Rightarrow\omega(G)\preccurlyeq_R\omega(F).
\end{equation}
For $\gamma\in F$, right cancellation gives a unique $q_\gamma(F)$ with
\begin{equation}\label{eq:deletion-quotient}
  \omega(F)=q_\gamma(F)\omega(F\setminus\{\gamma\}).
\end{equation}
\end{definition}

Fix an arbitrary total order on $V(\mathcal K)$, used only for signs. Put, as a free left $A$-module,
\[
  P_q=A\otimes_\kk\kk\{F\in\mathcal K:|F|=q\}.
\]
For $F=\{\gamma_1<\cdots<\gamma_q\}$ define
\begin{equation}\label{eq:weighted-diff}
  d_q(a\otimes[F])=\sum_{i=1}^q(-1)^{i-1}a q_{\gamma_i}(F)\otimes[F\setminus\{\gamma_i\}].
\end{equation}

\begin{lemma}[Two-deletion identity]\label{lem:two-deletion}
For distinct $\gamma,\eta\in F$,
\[
  q_\gamma(F)q_\eta(F\setminus\{\gamma\})
  =q_\eta(F)q_\gamma(F\setminus\{\eta\}).
\]
\end{lemma}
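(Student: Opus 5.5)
The plan is to use associativity of the monoid together with right cancellation. Put $G=F\setminus\{\gamma,\eta\}$. By \cref{eq:deletion-quotient}, the quotients are determined by the identities
\[
  \omega(F)=q_\gamma(F)\,\omega(F\setminus\{\gamma\}),\qquad
  \omega(F\setminus\{\gamma\})=q_\eta(F\setminus\{\gamma\})\,\omega(G).
\]
Substituting the second identity into the first gives
\[
  \omega(F)=q_\gamma(F)\,q_\eta(F\setminus\{\gamma\})\,\omega(G).
\]
Exchanging the roles of $\gamma$ and $\eta$ gives, in the same way,
\[
  \omega(F)=q_\eta(F)\,q_\gamma(F\setminus\{\eta\})\,\omega(G).
\]

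Both expressions equal $\omega(F)$, so
\[
  q_\gamma(F)\,q_\eta(F\setminus\{\gamma\})\,\omega(G)=q_\eta(F)\,q_\gamma(F\setminus\{\eta\})\,\omega(G).
\]
Cancelling $\omega(G)$ on the right, which is allowed because $M$ is right-cancellative, yields the claimed identity.

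There is no real obstacle here. The only thing to check is that every quotient in the argument is well defined. The weight axiom \cref{eq:weight-axioms} applied to $G\subseteq F\setminus\{\gamma\}\subseteq F$ guarantees that each right divisibility actually holds. Uniqueness of each quotient is then exactly right cancellation, the same hypothesis used in the final step.

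In the application, right cancellation for $M_c$ is supplied by \cref{thm:interval-embedding}, and the weight axioms are supplied by \cref{cor:weight-axioms}.
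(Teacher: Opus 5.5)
Your proposal is correct and is exactly the paper's argument: both products, right-multiplied by $\omega(F\setminus\{\gamma,\eta\})$, equal $\omega(F)$ by the defining relations of the deletion quotients, and right cancellation in $M$ then gives the identity. Your extra check that each quotient exists (by the weight axioms applied to the nested faces) and is unique (by right cancellation) is the same bookkeeping the paper leaves implicit.
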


\begin{proof}
Right multiply both sides by $\omega(F\setminus\{\gamma,\eta\})$. Both become $\omega(F)$; right cancellation gives the identity.
\end{proof}

\begin{proposition}
The maps \cref{eq:weighted-diff} satisfy $d_{q-1}d_q=0$.
\end{proposition}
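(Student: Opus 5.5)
The plan is the standard simplicial-boundary computation, with the two-deletion identity (\cref{lem:two-deletion}) supplying the commutation of coefficients that the ordinary argument gets for free. Since $d_q$ is left $A$-linear, it suffices to evaluate $d_{q-1}d_q$ on a basis element $1\otimes[F]$ with $F=\{\gamma_1<\cdots<\gamma_q\}$. The first application gives $\sum_i(-1)^{i-1}q_{\gamma_i}(F)\otimes[F\setminus\{\gamma_i\}]$. Applying $d_{q-1}$ and using left linearity then produces a double sum over ordered pairs $i\ne j$. Each term has the form $\pm\, q_{\gamma_i}(F)\,q_{\gamma_j}(F\setminus\{\gamma_i\})\otimes[F\setminus\{\gamma_i,\gamma_j\}]$.

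Next I will fix the signs. In $F\setminus\{\gamma_i\}$ the vertex $\gamma_j$ sits in position $j$ if $j<i$ and in position $j-1$ if $j>i$. So the term for $(i,j)$ carries the sign $(-1)^{i-1}(-1)^{j-1}$ when $j<i$, and $(-1)^{i-1}(-1)^{j-2}$ when $j>i$. For each unordered pair $\{a<b\}$ I will group the two ordered terms $(i,j)=(a,b)$ and $(b,a)$. They share the basis vector $[F\setminus\{\gamma_a,\gamma_b\}]$, and their signs are $(-1)^{a+b-3}$ and $(-1)^{a+b-2}$, which are opposite.

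It remains to check that the two monoid coefficients agree. Applying \cref{lem:two-deletion} with $\gamma=\gamma_a$ and $\eta=\gamma_b$ gives
\[
  q_{\gamma_a}(F)\,q_{\gamma_b}(F\setminus\{\gamma_a\})=q_{\gamma_b}(F)\,q_{\gamma_a}(F\setminus\{\gamma_b\}),
\]
so the paired terms cancel exactly in $A=\kk[M]$, and the whole sum vanishes. The cases $q\le1$ are vacuous, since $d_0$ is the zero map or the augmentation, depending on convention.

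There is no real obstacle. The only care needed is that the quotients $q_\gamma$ are well defined, which holds by right cancellativity (for $M_c$ this is \cref{thm:interval-embedding}), and that the lemma is applied to the face $F$ itself, not to a subface.
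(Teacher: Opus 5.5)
Your proof is correct and is exactly the paper's argument: each face $F\setminus\{\gamma_a,\gamma_b\}$ arises from the two deletion orders, the monoid coefficients agree by \cref{lem:two-deletion}, and the simplicial signs are opposite, which you verify explicitly. (Minor quibble: if $d_0$ is read as the augmentation, $\varepsilon d_1=0$ is not vacuous; it holds because $q_\gamma(\{\gamma\})=\omega(\{\gamma\})$ has degree one.)
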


\begin{proof}
Each unordered two-vertex deletion occurs twice. The coefficients agree by \cref{lem:two-deletion}, while the simplicial signs are opposite.
\end{proof}

Give the complex the fine monoid grading $\deg_M(a\otimes[F])=a\omega(F)$. For $b\in M$ define
\begin{equation}\label{eq:general-fibre}
  K_b=\{F\in\mathcal K:\omega(F)\preccurlyeq_Rb\}.
\end{equation}

\begin{proposition}[Fine-degree fibre identification]\label{prop:fine-strand}
For $b\ne1$, the monoid-degree-$b$ strand of the weighted complex is canonically the augmented simplicial chain complex of $K_b$.
\end{proposition}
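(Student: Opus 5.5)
The approach is to decompose the strand by monoid degree and match bases and differentials term by term. Fix $b\ne1$ in $M$. An element of $P_q$ is a $\kk$-linear combination of basis elements $a\otimes[F]$ with $a\in M$ and $|F|=q$, and its fine degree is $a\omega(F)$. The degree-$b$ strand in homological degree $q$ therefore has $\kk$-basis
\[
  \{a\otimes[F]:|F|=q,\ a\omega(F)=b\}.
\]
If $F\in K_b$, right cancellation in $M$ gives a \emph{unique} $a$ with $b=a\omega(F)$; conversely $a\omega(F)=b$ forces $\omega(F)\preccurlyeq_R b$. So the basis is in bijection with the $q$-vertex faces of $K_b$. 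The empty face has $\omega(\varnothing)=1\preccurlyeq_R b$, so it contributes $P_0$ in degree $b$ with basis element $b\otimes[\varnothing]$. This is the augmentation term $\kk$ in homological degree $-1$ of the augmented chain complex, after shifting by one.

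Next I would check that $d_q$ is homogeneous for the fine grading. By \eqref{eq:deletion-quotient},
\[
  a\,q_{\gamma}(F)\,\omega(F\setminus\{\gamma\})=a\omega(F)=b.
\]
So the summand $a\,q_\gamma(F)\otimes[F\setminus\{\gamma\}]$ is exactly the basis element attached to the face $F\setminus\{\gamma\}\in K_b$ under the bijection above. The face $F\setminus\{\gamma\}$ lies in $K_b$ because $K_b$ is closed under subfaces: $\omega(G)\preccurlyeq_R\omega(F)\preccurlyeq_R b$ by the weight axioms and transitivity.

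Under the bijection, $d_q$ therefore sends $[F]$ to $\sum_i(-1)^{i-1}[F\setminus\{\gamma_i\}]$ with coefficients all equal to $1$. This is the simplicial boundary with respect to the fixed vertex order. The map $d_1$ sends a vertex to $[\varnothing]$, which is the augmentation, so the strand is canonically the augmented chain complex $\tilde C_\bullet(K_b)$ shifted by one.

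The only point needing care is the uniqueness of the cofactor $a$, which is what makes the identification canonical. It uses right-cancellativity of $M$, which is an assumption of this subsection. The condition $b\ne1$ is used only to exclude the degree-$1$ strand, which is just $\kk$ in $P_0$ and where the augmentation is not part of the statement. No step looks like a genuine obstacle; the main care is the sign and augmentation bookkeeping.
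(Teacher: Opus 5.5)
Your proposal is correct and follows the paper's proof essentially line for line: right cancellation gives the unique cofactor $a_F$ with $b=a_F\omega(F)$, and the identity $a\,q_\gamma(F)\,\omega(F\setminus\{\gamma\})=b$, together with uniqueness of the cofactor for $F\setminus\{\gamma\}$, turns the weighted differential into the ordinary augmented boundary. Your explicit check that $K_b$ is closed under subfaces is left implicit in the paper, and the rest of the bookkeeping matches.
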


\begin{proof}
A degree-$b$ basis vector labelled by $F$ exists exactly when $b=a_F\omega(F)$ for some $a_F\in M$, i.e. when $F\in K_b$. Right cancellation makes $a_F$ unique. If $G=F\setminus\{\gamma\}$, then
\[
  a_Fq_\gamma(F)\omega(G)=b=a_G\omega(G),
\]
so $a_Fq_\gamma(F)=a_G$. Under $a_F\otimes[F]\mapsto[F]$, the weighted differential is the ordinary augmented boundary.
\end{proof}

\begin{theorem}[Weighted-face criterion]\label{thm:weighted-criterion}
Assume \cref{eq:weight-axioms} and assume every nonidentity fibre $K_b$ is $\kk$-acyclic. Then \cref{eq:weighted-diff}, with the natural augmentation $P_0=A\to\kk$, is a minimal linear graded free resolution of the trivial left $A$-module. Hence $A$ is Koszul in the sense of \cref{conv:koszul}.
\end{theorem}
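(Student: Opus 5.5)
The plan is to read the complex $P_\bullet\to\kk$ one fine monoid degree at a time and then deduce the three properties (exactness, linearity, minimality) from the grading. Since $A=\kk[M]$ is $M$-graded and each $P_q$ is a free module on generators $1\otimes[F]$ of fine degree $\omega(F)$, the differential \cref{eq:weighted-diff} is homogeneous for the fine grading $\deg_M$. Hence the augmented complex $\cdots\to P_1\to P_0\to\kk\to0$ splits as a direct sum of its homogeneous strands indexed by $b\in M$.

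First, the strand at $b=1$. Right cancellation and $M_0=\{1\}$ show that $a\omega(F)=1$ forces $a=1$ and $F=\varnothing$. So this strand is $\kk\cdot(1\otimes[\varnothing])\to\kk$, which is the augmentation isomorphism and is exact.

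Next, the strands at $b\ne1$. There the target $\kk$ contributes nothing, and \cref{prop:fine-strand} identifies the strand with the augmented simplicial chain complex of $K_b$. This fibre contains $\varnothing$, since $\omega(\varnothing)=1\preccurlyeq_R b$, so it is nonempty. By hypothesis $K_b$ is $\kk$-acyclic, meaning its reduced homology vanishes, which is exactly exactness of the augmented chain complex. The one point to check is the bookkeeping: the augmented chain term in dimension $-1$ is the face $\varnothing$, and this must match $P_0$ in degree $b$, generated by $b\otimes[\varnothing]$, with the augmentation to $\kk$ vanishing there because $b\ne1$. Summing over all $b$ gives exactness of the whole augmented complex.

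It remains to get linearity and minimality from the grading.

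\emph{Linearity.} Coarsen the fine grading to $\mathbb N$ via $\deg$. The generators of $P_q$ sit in internal degree $\deg\omega(F)=|F|=q$ by \cref{eq:weight-axioms}, so the resolution is linear.

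\emph{Minimality.} Every coefficient $q_\gamma(F)$ has degree $|F|-(|F|-1)=1$, so it lies in the augmentation ideal $A_+$. Hence $d(P_q)\subseteq A_+P_{q-1}$, and the differential of $\kk\otimes_A P_\bullet$ is zero. It follows that $\Tor_q^A(\kk,\kk)\cong\kk\{F:|F|=q\}$, concentrated in internal degree $q$.

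Koszulity in the sense of \cref{conv:koszul} then follows at once. Local finiteness is not needed, because we never use graded Nakayama: minimality is read off directly from the differential.

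I expect the main obstacle to be purely the identification of the augmented term together with the $b=1$ versus $b\ne1$ split. Everything else is formal once \cref{prop:fine-strand} is available.
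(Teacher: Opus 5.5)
Your proposal is correct and follows essentially the same route as the paper's proof: you split the augmented complex into fine-degree strands, identify each nonidentity strand with the augmented chain complex of $K_b$ via \cref{prop:fine-strand}, and read off linearity from $\deg\omega(F)=|F|$ and minimality from the degree-one deletion quotients. One small correction: the fact that the identity strand is just $\kk\cdot(1\otimes[\varnothing])\to\kk$ comes from the positive grading, $\deg(a\omega(F))=\deg a+|F|$, rather than from right cancellation, but this does not affect the argument.
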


\begin{proof}
By \cref{prop:fine-strand}, every nonidentity fine-degree strand is an exact augmented simplicial chain complex; the identity strand is the one-dimensional augmentation strand. Their algebraic direct sum is therefore exact. A face generator in homological degree $q$ has internal degree $q$, and every deletion quotient has degree one, so the resolution is linear. All differential coefficients have positive degree. Tensoring with $\kk=A/A_+$ kills all differentials, proving graded minimality. Thus the augmentation module has a linear graded free resolution, which is precisely \cref{conv:koszul}; compare \cite[Section~2]{Priddy1970}.
\end{proof}

\begin{remark}
Neither local finiteness of $\mathcal K$ nor finite-dimensionality of the homogeneous pieces of $A$ is required by \cref{conv:koszul}. Each free module is an algebraic direct sum indexed by faces, and each fine-degree strand is the ordinary simplicial chain complex with finite chains.
\end{remark}

\subsection{Application to the affine dual braid monoid}
By \cref{thm:interval-embedding}, $M_c$ is positively graded and right-cancellative. By \cref{cor:weight-axioms}, the intrinsic weight satisfies \cref{eq:weight-axioms}. By \cref{thm:fibre-contractible}, every nonidentity fibre is contractible.

\begin{theorem}[Minimal exceptional-cluster resolution]\label{thm:minimal-resolution}
Choose an arbitrary total order on $V(K_c)=\ind\Exc(\HH)$, used only for signs. Put, as a free left $\Adual$-module,
\[
  P_q=\Adual\otimes_\kk\kk\Fr_q(K_c).
\]
For $F=\{X_1<\cdots<X_q\}$, let $q_{X_i}(F)\in M_c$ be the unique element satisfying
\[
  w(F)=q_{X_i}(F)w(F\setminus\{X_i\}).
\]
By \cref{lem:interval-degree}, $q_{X_i}(F)$ has degree one and is therefore a reflection generator. Under the embedding \cref{thm:interval-embedding}, it is the explicit group quotient
\[
  q_{X_i}(F)=w(F)w(F\setminus\{X_i\})^{-1}\in T.
\]
Define
\[
  d_q(a\otimes[F])=\sum_{i=1}^q(-1)^{i-1}a q_{X_i}(F)\otimes[F\setminus\{X_i\}].
\]
Then
\[
  0\longrightarrow P_n\longrightarrow\cdots\longrightarrow P_1\longrightarrow P_0\longrightarrow\kk\longrightarrow0
\]
is a minimal linear graded free resolution of the trivial $\Adual$-module.
\end{theorem}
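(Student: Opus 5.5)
The plan is to apply the general weighted-face criterion, \cref{thm:weighted-criterion}, to the monoid $M=M_c$, the complex $\mathcal K=K_c$ and the face weight $\omega$ of \cref{def:weight}. First I will check the hypotheses on the monoid. By \cref{lem:interval-degree}, $M_c$ is $\mathbb N$-graded with $M_0=\{1\}$. By \cref{thm:interval-embedding}, it embeds in $G_W$ and is therefore right-cancellative. In rank two this is \cref{rem:rank-two}, and the finite-product case is handled componentwise. Next, \cref{cor:weight-axioms} gives the axioms \cref{eq:weight-axioms}. Concretely, \cref{thm:face-map} gives $w(\varnothing)=1$, $\ell_T(w(F))=|F|$ and $w(G)\le_T w(F)$ for $G\subseteq F$, and \cref{lem:absolute-right-div} turns this absolute-order inequality into right divisibility in $M_c$.

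Second, I will identify the deletion quotients concretely. For $F$ and $X_i\in F$, right cancellation gives a unique $q_{X_i}(F)$ with $w(F)=q_{X_i}(F)\,w(F\setminus\{X_i\})$. Degree additivity from \cref{lem:interval-degree} shows that it has degree one. By the generation statement of \cref{lem:interval-degree}, the degree-one elements of $M_c$ are exactly the reflection generators. Hence its image under the embedding of \cref{thm:interval-embedding} is the group quotient $w(F)w(F\setminus\{X_i\})^{-1}$. By \cref{lem:interval-quotient} and \eqref{eq:absolute-right}, this quotient lies in $T\cap P_c$. The differential displayed in the statement is then exactly \cref{eq:weighted-diff}.

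Third, I will supply acyclicity. By \cref{prop:fine-strand}, each fine-degree-$b$ strand with $b\ne1$ is the augmented chain complex of $K_b$. By \cref{thm:fibre-contractible}, every such $K_b$ is contractible, hence $\kk$-acyclic over any field. \Cref{thm:weighted-criterion} then yields exactness, linearity and minimality.

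Finally, I will check that the resolution stops at $P_n$. Faces of $K_c$ have at most $n$ vertices, since facets have $n$ vertices by \cref{thm:face-map} and \cref{thm:rectified}, or more directly since $\ell_T(w(F))=|F|\le\ell_T(c)=n$. So $P_q=0$ for $q>n$, and $P_n\neq0$ because facets exist.

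The only delicate point is the degree-one claim. It uses the fact that degree extends to $G_W$ and agrees with $\ell_T$ on $P_c$, which is \cref{lem:interval-degree}. The rest is bookkeeping assembled from earlier results, so I expect no serious obstacle.
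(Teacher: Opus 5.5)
Your proposal is correct and takes the same route as the paper. Like the paper, you apply \cref{thm:weighted-criterion}, taking right-cancellativity from \cref{thm:interval-embedding}, the weight axioms from \cref{cor:weight-axioms}, and contractibility of fibres from \cref{thm:fibre-contractible}. Your bound $|F|=\ell_T(w(F))\le\ell_T(c)=n$ is a slightly more direct way to get $P_q=0$ for $q>n$ than the paper's appeal to the fact that facets have $n$ vertices.
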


\begin{proof}
Apply \cref{thm:weighted-criterion} to the intrinsic exceptional face weight. By \cref{thm:face-map}, every facet has $n$ vertices, so $P_q=0$ for $q>n$.
\end{proof}

\subsection{Koszulity, Tor, and projective dimension}

\begin{corollary}[Homological consequences]\label{cor:homological}
For every field $\kk$,
\[
  \Adual=\kk[M([1,c]_T)]\text{ is Koszul},
  \qquad
  \Tor_q^{\Adual}(\kk,\kk)\cong\kk\Fr_q(K_c)
\]
with $\Tor_q$ concentrated in internal degree $q$, and
\[
  \pd_{\Adual}\kk=n.
\]
\end{corollary}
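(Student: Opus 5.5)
The plan is to read all three assertions off the resolution of \cref{thm:minimal-resolution}, using only standard facts about minimal graded free resolutions.

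\textbf{Step 1: Koszulity.} By \cref{thm:minimal-resolution}, the weighted-face complex
\[
  P_\bullet=\Adual\otimes_\kk\kk\Fr_\bullet(K_c)
\]
is a linear graded free resolution of the trivial module $\kk$. Under \cref{conv:koszul} this is exactly the definition of Koszulity, so Koszulity follows at once. This holds for every field $\kk$, because the fibre contractibility of \cref{thm:fibre-contractible} gives $\kk$-acyclicity over any field.

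\textbf{Step 2: Tor.} Compute $\Tor_q^{\Adual}(\kk,\kk)$ by applying $\kk\otimes_{\Adual}-$ to $P_\bullet$. Every coefficient $q_{X_i}(F)$ lies in $M_c$ in degree one, so it acts on $\kk=\Adual/\Adual_+$ by zero. Hence all the differentials of $\kk\otimes_{\Adual}P_\bullet$ vanish, and
\[
  \Tor_q^{\Adual}(\kk,\kk)\cong\kk\otimes_{\Adual}P_q=\kk\Fr_q(K_c).
\]
The basis vector $1\otimes[F]$ sits in internal degree $\deg\omega(F)=|F|=q$ by \cref{thm:face-map}. Therefore $\Tor_q$ is concentrated in internal degree $q$.

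\textbf{Step 3: projective dimension.} \textbf{Upper bound.} Every face has at most $n$ vertices, because the facets have exactly $n$ vertices by \cref{thm:face-map}. So $P_q=0$ for $q>n$, which gives $\pd_{\Adual}\kk\le n$.

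\textbf{Lower bound.} Since $\Tor_n^{\Adual}(\kk,\kk)\cong\kk\Fr_n(K_c)$, it suffices to show that $K_c$ has a facet. The complex is nonempty: for instance, the simple modules $S_1,\dots,S_n$ are pairwise $\Ext^1$-orthogonal in the mixed sense required by \cref{def:rectified}. Alternatively, the isomorphism $K_c\cong\DeltaReal$ of \cref{thm:rectified} and the fact that real facets have $n$ vertices give a facet directly.

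It is cleaner to cite that isomorphism than to verify the simple modules by hand. This needs some care, since the simples may have defects of both signs and would then have to be checked against the mixed compatibility condition. With a facet in hand, $\Tor_n\neq0$. Because graded minimal resolutions compute $\pd$ as the largest $q$ with $\Tor_q(\kk,\kk)\neq0$, we get $\pd_{\Adual}\kk=n$.

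\textbf{Where care is needed.} There is no real obstacle here; the subtle point is the justification of $\pd=\max\{q:\Tor_q\neq0\}$ when $\Adual$ is not locally finite. The argument avoids this issue as follows. Any projective resolution of length $<n$ would force $\Tor_n=0$. Since $\Tor$ is independent of the resolution, the nonvanishing $\Tor_n$ computed in Step 2 rules this out, with no finiteness hypothesis required.
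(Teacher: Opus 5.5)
Your proposal is correct and follows essentially the paper's own argument. Koszulity comes from the linear resolution of \cref{thm:minimal-resolution}. The Tor basis comes from the vanishing of the differentials on $\kk\otimes_{\Adual}P_\bullet$, since all coefficients have positive degree. Finally, $\pd_{\Adual}\kk=n$ follows from the length-$n$ bound together with $\Tor_n\neq0$ coming from an $n$-vertex facet.

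One caution: your aside that the simples $S_1,\dots,S_n$ are pairwise compatible is false in general. Two simples of the same defect sign joined by an arrow have nonzero $\Ext^1$; for example, a regular simple and the simple projective in a suitably oriented $\tilde A_2$. You were right to rely instead on the facet supplied by \cref{thm:face-map} or \cref{thm:rectified}.
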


\begin{proof}
The resolution of \cref{thm:minimal-resolution} is minimal and linear, hence gives Koszulity. Regard $\kk=\Adual/A_{c,+}$ as the augmentation right module. Since every differential coefficient lies in $A_{c,+}$, the differential on $\kk\otimes_{\Adual}P_\bullet$ vanishes, and
\[
  \kk\otimes_{\Adual}P_q\cong\kk\Fr_q(K_c),
\]
which yields the displayed Tor basis. The resolution has length at most $n$, while an $n$-vertex facet gives nonzero $\Tor_n$, so $\pd_{\Adual}\kk=n$.
\end{proof}

\subsection{The reducible case and the main theorem}

\begin{theorem}[Affine exceptional-cluster Koszul resolution]\label{thm:main}
Let $(W,S)$ be a finite direct product of irreducible crystallographic affine Coxeter systems, let $c$ be a Coxeter element, and let $\kk$ be any field. Then $\kk[M([1,c]_T)]$ is Koszul, and the modules and differential of \cref{thm:minimal-resolution} define an explicit minimal linear resolution of the trivial left module. Moreover,
\[
  \Tor_q^{\Adual}(\kk,\kk)\cong\kk\Fr_q(\Delta_c^{\mathrm{exc}}(\HH)),
  \qquad \pd_{\Adual}\kk=|S|.
\]
Via \cref{thm:rectified}, the same Tor basis may equivalently be labelled by real Reading--Stella faces.
\end{theorem}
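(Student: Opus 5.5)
The plan is to reduce to the irreducible case, which is already settled by \cref{thm:minimal-resolution} and \cref{cor:homological}, and then assemble the product by tensor products. Write $(W,c)=\prod_{a=1}^r(W_a,c_a)$ with each $(W_a,S_a)$ irreducible crystallographic affine. The first step is to identify the interval with a product: reflections of $W$ are reflections of a single factor, and reflection length is additive across commuting factors. From this,
\[
  [1,c]_T=\prod_a[1,c_a]_T,\qquad M([1,c]_T)\cong\prod_a M([1,c_a]_T).
\]
The defining interval relations are the componentwise ones together with the commutation relations between letters from different factors. These commutations are themselves interval relations, since $uv=vu$ is reflection-length additive whenever $u$ and $v$ lie in different factors. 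Consequently
\[
  \Adual\cong\bigotimes_a\kk[M_{c_a}]
\]
as $\mathbb N$-graded algebras.

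Next, for each factor $a$ let $P^{(a)}_\bullet$ be the minimal linear resolution of \cref{thm:minimal-resolution}, built on $\Delta^{\mathrm{exc}}_{c_a}(\HH_a)$. The total tensor product $P_\bullet=\bigotimes_aP^{(a)}_\bullet$ is a complex of free $\Adual$-modules. By the Künneth formula over the field $\kk$, it is exact with homology $\kk$ in degree $0$. It is linear because internal degrees add, and it is minimal because every differential coefficient lies in the augmentation ideal. A $q$-vertex face of the join $\Delta^{\mathrm{exc}}_c(\HH)=*_a\Delta^{\mathrm{exc}}_{c_a}(\HH_a)$ is a disjoint union of component faces whose sizes sum to $q$. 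This gives $P_q\cong\Adual\otimes\kk\Fr_q(\Delta_c^{\mathrm{exc}}(\HH))$.

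The remaining point is to check that this differential coincides with the weighted-face differential for the componentwise weight $\omega(F)=\prod_a\omega(F_a)$, up to the Koszul sign convention. When a vertex of $F_a$ is deleted, the quotient $q_\gamma(F)$ equals $q_\gamma(F_a)$, because the factors commute. So the two differentials differ only by signs, and the signs can be matched by choosing the total vertex order to list the factors consecutively. Alternatively, one can apply \cref{thm:weighted-criterion} directly: right cancellativity of the product monoid follows from \cref{thm:interval-embedding} factorwise, and the fibre $K_b$ for $b=(b_a)$ is the join $*_aK_{b_a}$. Each factor $K_{b_a}$ is either contractible (if $b_a\ne1$, by \cref{thm:fibre-contractible}) or equal to $\{\varnothing\}$. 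Since $b\ne1$, at least one factor is contractible, and so the join is contractible.

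From the resulting minimal linear resolution, Koszulity and the Tor description follow as in \cref{cor:homological}. For the projective dimension: facets of the join have $\sum_a|S_a|=|S|$ vertices, and a facet exists, so $\Tor_{|S|}\ne0$ and $\Tor_q=0$ for $q>|S|$. The Reading--Stella relabelling follows by applying \cref{thm:rectified} to each factor. The only step that needs care is the product identification of the monoid presentation. It needs a short argument that the product of interval monoids has exactly the interval presentation for $[1,c]_T$, using that $u\le_Tc$ if and only if each component satisfies $u_a\le_Tc_a$.
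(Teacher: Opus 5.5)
Your proposal is correct and follows essentially the same route as the paper. You reduce to the irreducible case, decompose the interval, the monoid and the algebra as products, and tensor the component resolutions over $\kk$, with the faces of the join $\Delta_c^{\mathrm{exc}}(\HH)=*_a\Delta^{\mathrm{exc}}_{c_a}(\HH_a)$ indexing the free basis. Your extra checks fill in details that the paper's one-paragraph proof leaves implicit: matching the Koszul-signed tensor differential with the weighted-face differential for the componentwise weight, verifying the product presentation of the interval monoid, and the alternative join-of-fibres argument $K_b=*_aK_{b_a}$ via the weighted-face criterion.
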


\begin{proof}
The preceding argument proves the irreducible affine case, including rank two by \cref{rem:rank-two}. For a finite product of irreducible affine systems, absolute intervals and interval monoids decompose as direct products, while the rectified exceptional complex is the simplicial join of component complexes. The intrinsic face weight is the componentwise product. Tensoring the component resolutions over the field $\kk$ preserves exactness, linearity and minimality, and homological degrees add.
\end{proof}

\appendix
\section{Contractibility of the real affine Reading--Stella complex}\label{app:topology}

This appendix proves the real affine cluster contractibility used in \cref{thm:principal-contractible}. The argument uses uniqueness of cluster expansions, deletion of the relative interior of the $\delta$-star support, and uniqueness of the projective accumulation direction $[\delta]$. The comparison with the abstract complex is made through a coefficient-star good cover.

\begin{theorem}[Contractibility of the real affine cluster complex]\label{thm:appendix-contractibility}
Let $\Phi$ be an irreducible symmetrizable affine root system of rank $r$, let $e$ be a Coxeter element, and let $\Delta_e^{\re}(\Phi)$ be the real Reading--Stella $e$-cluster complex. Then
\[
  |\Delta_e^{\re}(\Phi)|\simeq *.
\]
\end{theorem}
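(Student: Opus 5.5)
The plan is to realize $|\Delta_e^{\re}(\Phi)|$ geometrically inside $V=\mathbb R\otimes K_0$ via the Reading--Stella real $c$-cluster fan. The cones $\Cone(F)$ spanned by root labels of faces form a simplicial fan (uniqueness of cluster expansions, \cite[Section~5]{ReadingStella2020}). Its support is $V$ minus the relative interior of the part of the imaginary cone attached to $\delta$. Concretely, the real fan together with the $\delta$-star (the faces of the horizontal/regular part coned with $\delta$) covers $V$. Deleting $\relint$ of the $\delta$-star support leaves exactly the real support $|\mathcal F^{\re}|$.

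\textbf{Step 1.} Describe the complement precisely. The only accumulation of the infinite $\tau_e$-orbits \eqref{eq:RS-orbit} in projective space is the single direction $[\delta]$. Hence $|\mathcal F^{\re}|=V\setminus C_\delta$, where $C_\delta$ is the relative interior of a convex (in fact polyhedral) cone containing $\delta$ and spanned by $\delta$ and the horizontal region $U^c$-part. Passing to the unit sphere, the real support becomes $S^{r-1}\setminus \overline{D}$ up to homotopy, where $D$ is an open star-shaped neighbourhood-type set around $[\delta]$.

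\textbf{Step 2.} Prove that this set is contractible. If $C_\delta$ is a convex cone of full dimension not containing a line, then the sphere minus its relative interior deformation retracts, radially away from $\delta$, to a closed hemisphere-like disk. That disk is contractible. If $C_\delta$ has lower dimension, for instance the ray $\mathbb R_{>0}\delta$ together with a lower-dimensional horizontal face, I would instead show the removed set is a contractible closed subset whose complement is an open ball. To do this I would use the uniqueness of the accumulation direction together with the convexity of $C_\delta$.

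\textbf{Step 3.} Compare the Euclidean topology of the fan support with the weak topology of $|\Delta_e^{\re}|$. The canonical map $|\Delta|\to|\mathcal F^{\re}|/\mathbb R_{>0}$ is a continuous bijection, but not a homeomorphism, since the complex is not locally finite near $[\delta]$. To handle this I would build the coefficient-star cover: for each vertex $\beta$ take $U_\beta=\{x:\text{the cluster-expansion coefficient of }\beta\text{ in }x\text{ is }>0\}$. On the Euclidean side these sets are open, because uniqueness of cluster expansions makes the coefficients continuous on the support. Finite intersections are either empty or open stars of faces, which are convex and hence contractible, and the same holds on the simplicial side. Both covers are numerable with the same nerve, namely $\Delta$ itself. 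The nerve theorem for numerable good covers then gives $|\Delta|\simeq\text{nerve}\simeq|\mathcal F^{\re}|$, and the latter is contractible by Step 2.

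The main obstacle is Step 3's continuity claim. Near $[\delta]$ the faces accumulate, and it is not obvious that the cluster coefficients vary continuously on the Euclidean support away from $C_\delta$. I would first try to prove local finiteness of the fan on $V\setminus\overline{C_\delta}$, using the fact that the infinite orbits converge only to $[\delta]$. From local finiteness, continuity of the coefficients on the open complement follows immediately. The boundary of $C_\delta$ lies in the support and would require a separate argument, for which I would use the flag structure and the horizontal finite orbits.
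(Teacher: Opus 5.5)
Your architecture is the paper's: embed the real fan in the complete Reading--Stella fan, delete $\relint_{U^e}(I_e)$, contract the spherical complement, and compare with the weak topology through the coefficient-star cover and the numerable nerve theorem. But the step you flag as the main obstacle is left open, and it is the technical heart of the proof.

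\textbf{Openness of the coefficient stars.} Uniqueness of cluster expansions does not make the coefficients continuous on a fan that is not locally finite. Your proposed route also does not work as stated. Convergence of the infinite orbits to $[\delta]$ gives no local finiteness by itself: a cone spanned by a far-out vertex $\beta_k\approx[\delta]$ together with fixed vertices need not shrink. What controls the limit cones is which vertices stay fixed.

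The missing argument is \cref{lem:star-open}:
\begin{enumerate}
\item Suppose $x_k\to x$ with $\alpha\in C(x)\setminus C(x_k)$. Since $|C(x_k)|\le r$, pass to a subsequence with $C(x_k)=R\sqcup B_k$, where $R$ is fixed and each vertex outside $R$ lies in only finitely many $B_k$.
\item The vertices of $B_k$ then converge uniformly projectively to $[\delta]$.
\item Each $\rho\in R$ is compatible with infinitely many distinct vertices. Transjective vertices have finite links, so $\rho$ is not transjective. Hence $R\subseteq\Lambda_e^{\re}$, and $R\cup\{\delta\}$ is compatible by Reading--Stella's Proposition~5.6.
\item Normalizing by a positive functional and passing to the limit puts $x$ in $\Cone(R\cup\{\delta\})$, a cone of the \emph{complete} fan.
\item Since $x\in Y_e$, its $\delta$-coefficient vanishes. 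Uniqueness of expansion in the complete fan then forces $C(x)\subseteq R\subseteq C(x_k)$, a contradiction.
\end{enumerate}
The imaginary vertex $\delta$ and uniqueness in the complete fan are used essentially here, including at $\partial I_e$, where the fan really is not locally finite. The flag structure and finite orbits alone do not supply this.

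A smaller point: the sets $U_F$ are not convex in general. They are star-shaped about $q_F$, because $x\in U_F$ and $q_F$ lie in a common cluster cone, and that is what the contraction uses.

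\textbf{Steps~1--2.} The identity $\Supp\operatorname{Fan}_e^{\re}(\Phi)=V\setminus\relint_{U^e}(I_e)$ does not follow from accumulation at $[\delta]$. It is the equivalence $m_\delta(v)>0\iff v\in\relint_{U^e}(I_e)$.
\begin{enumerate}
\item The forward direction uses the star-support identity (Reading--Stella, Proposition~6.13(2)) together with $\delta\in\relint I_e$.
\item The converse uses the $\Lambda_e$-supported expansion (Proposition~6.8). It also uses Lemma~5.10: pairwise compatible regular supports miss a canonical simple root in every component. Together these put real-supported vectors of $I_e$ on its relative boundary.
\end{enumerate}

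Your description of what is removed is also off. $I_e$ is full-dimensional in the hyperplane $U^e$, so on the sphere you remove the relative interior of an $(r-2)$-ball lying in an equator. That set is neither a neighbourhood of $[\delta]$ nor closed. So your ``full-dimensional'' case never occurs. Your plan for the ``lower-dimensional'' case (the complement of a closed contractible set is an open ball) is misdirected, and it is false without tameness hypotheses.

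Your radial idea, however, works directly in the actual case, and is simpler than the paper's Schoenflies/Alexander-trick straightening. Take $p=-\delta/\|\delta\|$ and $H_t(x)=((1-t)x+tp)/\|(1-t)x+tp\|$.
\begin{enumerate}
\item The denominator vanishes on $Y_e$ only at $x=\delta/\|\delta\|$, which is excluded.
\item The homotopy stays in $Y_e$. If $(1-t)x+tp\in\relint I_e$ with $0<t<1$, then $(1-t)x=\bigl((1-t)x+tp\bigr)+t\delta/\|\delta\|\in\relint I_e$, so $x\in\relint I_e$.
\item The endpoint is allowed: $p\notin I_e$ because $I_e$ is pointed.
\end{enumerate}
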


Let $V$ be the real root space. Let $\operatorname{Fan}_e(\Phi)$ be the full Reading--Stella fan of nonnegative spans of $e$-cluster faces and $\operatorname{Fan}_e^{\re}(\Phi)$ its real subfan. Unique cluster expansions and completeness of the full simplicial fan are \cite[Theorems~6.2 and 6.4]{ReadingStella2020}.

Let $U^e\subset V$ be the associated codimension-one hyperplane. Let $\Upsilon_e=\Phi\cap U^e$ and let $I_e\subset U^e$ be the Reading--Stella imaginary cone, i.e. the nonnegative span of the positive roots in $\Upsilon_e$, equivalently the cone generated by its canonical simple system. Let $\Lambda_e^{\re}$ be the distinguished finite set of regular real roots compatible with $\delta$, and put $\Lambda_e=\Lambda_e^{\re}\sqcup\{\delta\}$. We use the $\Lambda_e$-supported expansion on $I_e$ from \cite[Proposition~6.8]{ReadingStella2020}, the characterization of real roots compatible with $\delta$ from \cite[Proposition~5.6]{ReadingStella2020}, the non-component-full property of pairwise compatible regular supports from \cite[Lemma~5.10]{ReadingStella2020}, and the star-support identity from \cite[Proposition~6.13(2)]{ReadingStella2020}.

\begin{lemma}[Imaginary deletion]\label{lem:imaginary-deletion}
The support of the real subfan is
\begin{equation}\label{eq:real-support}
  \Supp\operatorname{Fan}_e^{\re}(\Phi)=V\setminus\relint_{U^e}(I_e).
\end{equation}
\end{lemma}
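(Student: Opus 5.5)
We prove the two inclusions in \eqref{eq:real-support} separately, starting from the completeness of the full Reading--Stella fan. By \cite[Theorems~6.2 and 6.4]{ReadingStella2020}, $\operatorname{Fan}_e(\Phi)$ is a complete simplicial fan: every $x\in V$ lies in the relative interior of exactly one cone $\Cone(F)$, and the coefficients of $x$ along the rays of $F$ are positive and uniquely determined. The cones of $\operatorname{Fan}_e(\Phi)$ fall into two classes, the real ones and those $F$ that contain the imaginary ray $\delta$. So it is enough to show that the union of the relative interiors of the cones containing $\delta$ is exactly $\relint_{U^e}(I_e)$.

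First, let $F$ be a cone containing $\delta$. By \cite[Proposition~5.6]{ReadingStella2020}, every other vertex of $F$ is a real root compatible with $\delta$, and these are precisely the elements of $\Lambda_e^{\re}$. All of them, together with $\delta$, lie in $U^e$ and in $I_e$, so $\Cone(F)\subseteq I_e$. For the relative-interior statement I will use the star-support identity \cite[Proposition~6.13(2)]{ReadingStella2020}, which writes $I_e$ as the union of the cones $\Cone(F)$ with $\delta\in F$. Because $\delta$ has strictly positive coefficient in every such $x$, and because pairwise compatible regular supports are not component-full \cite[Lemma~5.10]{ReadingStella2020}, such an $x$ cannot lie on a proper face of $I_e$: a point on the relative boundary has its canonical-simple support missing some simple root of a horizontal component, which rules out a positive $\delta$-coefficient. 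This gives $\relint\Cone(F)\subseteq\relint_{U^e}(I_e)$.

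For the converse, let $x\in\relint_{U^e}(I_e)$. Take its $\Lambda_e$-supported expansion from \cite[Proposition~6.8]{ReadingStella2020}. I expect the coefficient of $\delta$ in this expansion to be positive: if it were zero, then $x$ would be a nonnegative combination of pairwise compatible regular real roots, whose supports are non-component-full by \cite[Lemma~5.10]{ReadingStella2020}. Such a combination lies on a proper face of the simplicial cone generated by the canonical simple system, which contradicts the choice of $x$ in the relative interior. By uniqueness of cluster expansions, the unique cone containing $x$ in its relative interior is therefore one that contains $\delta$, so $x$ is not in the real support. Putting the two directions together with completeness gives \eqref{eq:real-support}.

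I expect the main obstacle to be the boundary analysis: showing that a regular point with zero $\delta$-coefficient lies exactly on $\partial_{U^e}I_e$, and that a point with positive $\delta$-coefficient lies exactly in its relative interior. This needs a careful componentwise argument over the horizontal components, using non-component-fullness to see that within each component some canonical simple root is missed. The $\delta$ term is what restores full support in every component simultaneously, since $\delta$ restricted to each component is a positive sum of that component's simple roots.
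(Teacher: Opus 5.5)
Your proposal is correct and follows the paper's proof essentially step for step. Completeness and uniqueness of cluster expansions reduce the claim to showing that a positive $\delta$-coefficient is equivalent to membership in $\relint_{U^e}(I_e)$; the forward inclusion comes from $\Cone(F)\subseteq I_e$ for $\delta\in F$, and the converse from the $\Lambda_e$-supported expansion \cite[Proposition~6.8]{ReadingStella2020} together with non-component-fullness \cite[Lemma~5.10]{ReadingStella2020}.

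Two points need tightening. In the forward direction Lemma~5.10 plays no role: $\delta\in\relint I_e$ and $y\in I_e$ already give $a\delta+y\in\relint I_e$ for $a>0$ by convexity. Also, $I_e$ is not simplicial once $\Upsilon_e$ has at least two irreducible components, since the canonical simple roots of each component sum to $\delta$. So the boundary claim in the converse should be witnessed by a supporting functional of $I_e$, not by simplicial coordinates; for example, take the functional equal to $1$ on one missed canonical simple root in each component and $0$ on all others.
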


\begin{proof}
For $v\in V$, let $m_\delta(v)$ be the coefficient of $\delta$ in its unique cluster expansion. We claim
\begin{equation}\label{eq:delta-coefficient}
  m_\delta(v)>0\iff v\in\relint_{U^e}(I_e).
\end{equation}
If $m_\delta(v)>0$, write $v=a\delta+y$ with $a>0$ and all roots in the support of $y$ compatible with $\delta$. The supporting cone lies in the star of the $\delta$-ray, hence $y\in I_e$ by \cite[Proposition~6.13(2)]{ReadingStella2020}. Since $\delta\in\relint I_e$, the elementary relative-interior property of convex cones gives $v\in\relint I_e$.

Conversely, let $v\in\relint I_e$. Its distinguished expansion is supported in $\Lambda_e$ by \cite[Proposition~6.8]{ReadingStella2020}. If the $\delta$-coefficient were zero, the support would be a pairwise compatible subset $R\subseteq\Lambda_e^{\re}$. By \cite[Lemma~5.10]{ReadingStella2020}, in every irreducible component of $\Upsilon_e$ the support of $R$ misses at least one canonical simple root. Therefore every nonnegative combination supported on $R$ lies on the boundary of the canonical-simple-root cross-section of $I_e$, contradicting $v\in\relint I_e$. This proves \cref{eq:delta-coefficient}.

By \cite[Definition~3.1]{ReadingStella2020}, $\delta$ is the unique imaginary vertex. Hence a vector lies in a cone supported entirely on real vertices exactly when its $\delta$-coefficient is zero. Completeness of the full fan and \cref{eq:delta-coefficient} give \cref{eq:real-support}.
\end{proof}

Intersect with a unit sphere $S(V)\cong S^{r-1}$ and set
\[
  D_e=S(V)\cap I_e,\qquad
  Y_e=S(V)\cap\Supp\operatorname{Fan}_e^{\re}(\Phi).
\]

\begin{lemma}[Spherical complement]\label{lem:spherical-complement}
The space $Y_e$ is contractible.
\end{lemma}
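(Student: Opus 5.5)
We will show that $Y_e$ is a sphere with an open, contractible, convex spherical disc removed. By \cref{lem:imaginary-deletion},
\[
  Y_e=S(V)\setminus\bigl(S(V)\cap\relint_{U^e}(I_e)\bigr).
\]
The cone $I_e$ is a pointed, closed, convex, polyhedral cone inside the hyperplane $U^e$. It is spanned by finitely many canonical simple roots of $\Upsilon_e$ and has $\delta$ in its relative interior. So $D_e=S(V)\cap I_e$ is a geodesically convex spherical polytope lying in the great sphere $S(U^e)\cong S^{r-2}$, of full dimension $r-2$ there. The removed set $S(V)\cap\relint I_e$ is the relative interior of $D_e$. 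Its closure $D_e$ is a closed $(r-2)$-disc sitting in a codimension-one great sphere of $S^{r-1}$.

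The first step is to retract $Y_e$ radially away from the point $\hat\delta=\delta/|\delta|$. Let $x\in S(V)$ with $x\ne\pm\hat\delta$, and consider the geodesic from $\hat\delta$ through $x$ to the antipode $-\hat\delta$. We claim this geodesic meets $\relint D_e$ only in an initial segment starting at $\hat\delta$. This holds because $D_e$ is convex and contains $\hat\delta$ in its relative interior. When $x\notin U^e$, the geodesic leaves $U^e$ immediately and meets $D_e$ only at $\hat\delta$. When $x\in U^e$, the geodesic stays in $S(U^e)$ and exits the convex disc exactly once, namely through $\partial D_e$.

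This gives a deformation retraction of $S(V)\setminus\{\hat\delta\}$ onto $-\hat\delta$, by sliding along these geodesics. We restrict it to $Y_e$ in the following form: each point moves along its geodesic monotonically toward $-\hat\delta$. Such a flow never re-enters the deleted open disc, since it only moves away from $\hat\delta$ along the geodesic, and along every geodesic the deleted set is an initial segment. Hence it restricts to a deformation of $Y_e$ onto the point $-\hat\delta$, and this point lies in $Y_e$ because $-\delta\notin I_e$. Concretely, we use the straight-line homotopy in the hemisphere-complement chart $S(V)\setminus\{\hat\delta\}\cong\mathbb R^{r-1}$ obtained by stereographic projection from $\hat\delta$. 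In that chart, the deleted set becomes the complement of a closed star-shaped neighbourhood of infinity intersected with a hyperplane.

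The main obstacle is this last point. We must check that the image of $Y_e$ in the chart is star-shaped with respect to the image of $-\hat\delta$, which is the origin, and this requires some care. Geodesics through $\hat\delta$ project to rays from the origin. On the ray in direction $x\in U^e$, the deleted set is $\{\text{radius}>\rho(x)\}$ for some $\rho(x)>0$; on rays in directions $x\notin U^e$, nothing is deleted. The complement of such a set is star-shaped about the origin, so the straight-line contraction to the origin stays inside $Y_e$, and therefore $Y_e$ is contractible. The subtle case is rank $r=2$. There $U^e=\mathbb R\delta$, the disc $D_e$ degenerates to the single point $\hat\delta$, and $Y_e$ is a circle minus a point, which is still contractible.
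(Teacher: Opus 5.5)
Your proof is correct, and it takes a genuinely different, more elementary route than the paper. The paper treats $r=2$ separately. For $r\ge3$ it uses PL bicollaring, Brown's generalized Schoenflies theorem and the Alexander trick to build an ambient homeomorphism carrying $(E,D_e)$ to a great sphere with a standard closed hemisphere $D_+$, and extends it over the suspension. Only then does it contract $S^{r-1}\setminus\relint D_+$ along great circles toward the antipode $p$ of the centre of $D_+$.

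You skip that normalization entirely. Stereographic projection from $\hat\delta$ sends great semicircles issuing from $\hat\delta$ to rays from the origin (the image of $-\hat\delta$), and sends $S(U^e)\setminus\{\hat\delta\}$ to a linear hyperplane $H$. Spherical convexity, together with $\hat\delta\in\relint D_e$ and $-\hat\delta\notin D_e$ (pointedness), then shows that on each ray the deleted set is a terminal segment. Hence the image of $Y_e$ is star-shaped about the origin. This uses only that $I_e$ is a closed pointed convex cone with $\delta$ in its relative interior. Neither polyhedrality nor full-dimensionality in $U^e$ is needed, and the argument is uniform in $r$: for $r=2$ one simply has $H=\{0\}$.

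In ambient terms, your argument says that $x\mapsto((1-t)x-t\hat\delta)/\|(1-t)x-t\hat\delta\|$ already stays in $Y_e$, because $\relint I_e+I_e\subseteq\relint I_e$ and $-\delta\notin I_e$. So the paper's final contraction works directly on $D_e$. What the paper's heavier route buys is an identification of the pair $(S(V),D_e)$ with a sphere and an equatorial hemisphere up to homeomorphism, which contractibility does not require.

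One small wording slip: in the chart, the deleted set is $H$ intersected with the complement of a closed set star-shaped about the origin. It is not the complement of a neighbourhood of infinity. Your final paragraph states this correctly.
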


\begin{proof}
If $r=2$, then $U^e$ is one-dimensional and contains $\delta$, so $U^e=\mathbb R\delta$. Since $I_e\subset U^e$ is a nonzero pointed cone containing $\delta$ in its relative interior, necessarily $I_e=\mathbb R_{\ge0}\delta$. Hence $D_e=\{[\delta]\}\subset S^1$ and, by \cref{lem:imaginary-deletion},
\[
  Y_e=S^1\setminus\{[\delta]\}\cong\mathbb R.
\]
Assume from now on that $r\ge3$.

Reading--Stella identify $I_e$ as a closed pointed polyhedral cone whose extreme rays are generated by the canonical simple roots $\Xi_e$ of $\Upsilon_e$ and whose relative interior contains $\delta$ \cite[discussion preceding Proposition~6.6 and Proposition~6.13(2)]{ReadingStella2020}. It is full-dimensional in $U^e$: an imaginary $e$-cluster has $r-1$ vertices and is a $\mathbb Z$-basis of the lattice $Q\cap U^e$ by \cite[Proposition~5.14(5)--(6)]{ReadingStella2020}, and its cone is contained in $I_e$.

Choose a linear functional $\lambda>0$ on $I_e\setminus\{0\}$. The affine section
\[
  P_e=\{v\in I_e:\lambda(v)=1\}
\]
is a compact convex polytope with nonempty interior in an $(r-2)$-dimensional affine hyperplane. Radial normalization is a homeomorphism $P_e\to D_e$, so $D_e$ is a proper full-dimensional spherically convex polyhedral closed $(r-2)$-ball in the equator $E=S(V)\cap U^e\cong S^{r-2}$. In particular $\partial D_e\subset E$ is a polyhedral embedded $(r-3)$-sphere. Standard PL regular-neighborhood theory makes this codimension-one polyhedral embedding locally flat and bicollared; see, for example, \cite[Chapters~3--4]{RourkeSanderson1982}. Thus, for $r\ge4$, Brown's generalized Schoenflies theorem applies to the middle sphere of such a bicollar and implies that the two closures of $E\setminus\partial D_e$ are topological $(r-2)$-balls \cite{Brown1960}. For $r=3$ the same conclusion is immediate because the pair $(E,D_e)$ is a circle together with a closed arc. A boundary homeomorphism between either of these balls and a standard hemisphere extends across the other ball by the Alexander trick, so the two ball homeomorphisms may be chosen to agree on their common boundary. Thus in every case $r\ge3$ there is an ambient homeomorphism of the pair $(E,D_e)$ with the pair consisting of $S^{r-2}$ and a standard closed hemisphere. Extending this homeomorphism over the two cones of the suspension $S^{r-1}=\Sigma E$ reduces the problem to the standard hemisphere
\[
  D_+=\{(x_1,x',0)\in S^{r-1}:x_1\ge0\}.
\]
By \cref{lem:imaginary-deletion}, it is enough to contract $S^{r-1}\setminus\relint D_+$.

Put $p=(-1,0,\ldots,0)$ and define
\begin{equation}\label{eq:hemisphere-contraction}
  H_t(x)=\frac{(1-t)x+tp}{\|(1-t)x+tp\|},\qquad0\le t\le1.
\end{equation}
The denominator can vanish only at $x=-p=(1,0,\ldots,0)$, which belongs to $\relint D_+$ and is therefore absent from the domain. If the normal coordinate $x_r$ is nonzero, then for $t<1$ the normal coordinate of $(1-t)x+tp$ is also nonzero, so the path cannot enter the equatorial set $D_+$. If $x_r=0$, then $x_1\le0$ because $x\notin\relint D_+$, and the first coordinate $(1-t)x_1-t$ remains nonpositive. Hence the homotopy stays in $S^{r-1}\setminus\relint D_+$ and contracts it to $p$.
\end{proof}

Let $\tau_e$ be the Reading--Stella piecewise permutation associated with $e$.

\begin{lemma}[Unique projective accumulation]\label{lem:unique-accumulation}
Every sequence of distinct real cluster roots that leaves every finite subset converges projectively to the positive null-root ray $[\delta]$. Moreover, every transjective real vertex has finite link.
\end{lemma}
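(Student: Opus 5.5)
We argue through the vertex classification \cref{eq:RS-orbit} and the rectified model of \cref{thm:rectified}. Real vertices split into finitely many finite $\tau_e$-orbits, which are regular roots in $\Lambda_e^{\re}$, and the infinite orbits $\beta_{i,m}=\tau_e^m(-\alpha_i)$. A sequence of distinct real cluster roots that leaves every finite subset must eventually lie in infinite orbits. Passing to subsequences, we may assume it runs through a single orbit $i$ with $m\to+\infty$ or $m\to-\infty$. So the first assertion reduces to showing that $c^{m}\iota_i$ and $c^{-m}\pi_i$ converge projectively to $[\delta]$ as $m\to\infty$.

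For this projective limit we plan to use the standard affine Coxeter-element dynamics. Write $c=t_{v}\cdot\bar c$ in terms of the horizontal part. Equivalently, on $V$ the element $c$ is unipotent modulo the finite-order action on $V/\mathbb R\delta$. More concretely, some power $c^h$, with $h$ the order of the image of $c$ in the finite Weyl group, acts as a translation-type transvection
\[
  c^h(x)=x+\kappa\,\partial(x)\,\delta
\]
for a nonzero constant $\kappa$, where $\partial$ is the defect form of \cref{eq:defect}. The defect $\partial(\iota_i)$ is nonzero because $I_i$ is preinjective, and likewise $\partial(\pi_i)\neq0$ by \cref{eq:tame-trichotomy}. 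Hence $c^{kh+j}\iota_i=c^j\iota_i+k\kappa\,\partial(\iota_i)\delta$, and after normalization this converges to $\pm[\delta]$. Positivity of the roots (by \cref{prop:positive-vertices} and \cref{eq:RS-orbit} they are positive for $m\neq0$) forces the sign to be $+$, and the same holds for $\pi_i$. The finitely many residues $j$ give finitely many subsequences, all with the same limit.

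For the finite-link claim, let $\beta$ be transjective, say $\beta=\beta_{i,m}$. Using $\tau_e$-invariance of compatibility (\cite[Proposition~5.4(1)]{ReadingStella2020}), we move $\beta$ to $-\alpha_i$. By \cref{lem:negative-simple}, the link of $-\alpha_i$ is then the set of real roots $\gamma$ with $[\gamma:\alpha_i]=0$, together with the other negative simples, with the corresponding faces. Such positive roots have support in $\Pi\setminus\{\alpha_i\}$, a finite root subsystem since every proper subdiagram of a connected affine diagram is of finite type. Hence there are only finitely many of them, and the link is finite.

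The main obstacle is justifying the transvection formula for $c^h$ with the precise nonvanishing constant, across all symmetrizable types and with the paper's convention $E_{c^{-1}}$. We would first try to cite Hanson--Reading's description of the action of $c$ (their Section~8.1 defect normalization). Failing that, we would argue directly as follows. The element $c$ fixes $\delta$, the induced action on $V/\mathbb R\delta$ has finite order $h$, and so $c^h-1$ maps $V$ into $\mathbb R\delta$ with kernel the defect hyperplane. The map $c^h-1$ is nonzero because $c$ has infinite order.
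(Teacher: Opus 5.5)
Your proposal is correct and is essentially the paper's argument: a power of the Coxeter element acts as a transvection $x\mapsto x+f(x)\delta$, so the infinite tails grow linearly along $\delta$ and positivity selects $[\delta]$. The paper obtains $f$ from the splitting $V=\mathbb R\gamma_e\oplus U^e$ with $(e-1)\gamma_e=\delta$ and $e$ of finite order on $U^e$, and a transjective vertex is rotated to a negative simple root whose link lives in a finite parabolic subsystem. The one step you leave open, $\ker f=\ker\partial$, follows from $c^h$-invariance of $E_{c^{-1}}$ together with $\delta$ lying in the radical of the symmetrized form $B$, which give $f(x)\partial(y)=f(y)\partial(x)$; alternatively, bypass it as the paper does, since $f(\iota_i)=0$ or $f(\pi_i)=0$ would make the corresponding $\tau_e$-orbit finite.
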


\begin{proof}
Reading--Stella prove
\begin{equation}\label{eq:gamma-delta}
  (e-1)\gamma_e=\delta
\end{equation}
and that $e$ has finite order on $U^e$ \cite[Proposition~2.10]{ReadingStella2020}. Choose $M>0$ with $e^M|_{U^e}=\mathrm{id}$. If $\beta=a\gamma_e+u$ with $u\in U^e$, then
\begin{equation}\label{eq:Jordan-tail}
  e^{qM}\beta=\beta+qMa\delta,\qquad q\in\mathbb Z.
\end{equation}

By \cite[Proposition~3.12]{ReadingStella2020}, there are finitely many finite $\tau_e$-orbits and finitely many infinite ones. A sequence leaving every finite subset is therefore eventually on the infinite orbits. On any one of their tails, $\tau_e(\alpha)=e\alpha$ except at the negative-simple splice and its paired root. Far enough out, the tail agrees term-by-term with iterates of the linear map $e$. The coefficient $a$ above is nonzero, otherwise finite order on $U^e$ would make the orbit finite. \cref{eq:Jordan-tail} shows that the normalized directions tend to the line $\mathbb R\delta$; all but finitely many roots on either tail are positive, so the projective limit is the positive ray $[\delta]$. Since there are only finitely many tails and all have the same limit, the original sequence converges to $[\delta]$.

A power of $\tau_e$ sends every transjective vertex to a negative simple root. The link of a negative simple root is the cluster complex of a proper standard parabolic subsystem \cite[Proposition~5.4(3)]{ReadingStella2020}, which is finite in irreducible affine type. Hence every transjective real vertex has finite link.
\end{proof}

\begin{lemma}[Bounded-support extraction]\label{lem:bounded-extraction}
Let $(C_k)$ be a sequence of finite subsets of a set, with $|C_k|\le r$. After passage to a subsequence there are a fixed finite set $R$ and sets $B_k$ such that
\[
  C_k=R\sqcup B_k
\]
for every $k$, and every element outside $R$ occurs in only finitely many of the $B_k$.
\end{lemma}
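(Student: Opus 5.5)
The plan is a sunflower-type (Delta-system) extraction carried out by a finite induction that repeatedly enlarges $R$. First pass to a subsequence on which $|C_k|=s$ is constant; this is possible because $|C_k|\le r$ takes only finitely many values. The argument then proceeds by induction on $s$, or more precisely on how much of the common size is still unaccounted for by the fixed part.

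The key step is as follows. Suppose we already have a subsequence and a fixed finite set $R$ with $R\subseteq C_k$ for all $k$, and write $B_k=C_k\setminus R$, so that $|B_k|=s-|R|$. Either every element $x\notin R$ lies in only finitely many $B_k$, in which case we stop, or some $x\notin R$ lies in infinitely many $B_k$. In the second case we pass to the subsequence of those $k$ with $x\in B_k$ and replace $R$ by $R\cup\{x\}$. Each such step strictly increases $|R|$ while keeping $|R|\le s$, so the process terminates after at most $s\le r$ steps. At termination we have a subsequence and a fixed set $R$ with $C_k=R\sqcup B_k$, where every element outside $R$ occurs in only finitely many $B_k$. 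The base case $R=\varnothing$ starts the iteration.

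I expect the only subtle point to be the meaning of ``occurs in only finitely many'' after re-indexing. Passing to further subsequences can only decrease the number of occurrences of an element, and the stopping condition is checked on the final subsequence. The conclusion is therefore stated for that final subsequence, and no diagonal argument is needed because the number of refinements is finite. It is also worth noting that $B_k$ may be empty, for instance when $s=|R|$, and that this is allowed. No earlier results from the paper are required; the lemma is purely combinatorial.
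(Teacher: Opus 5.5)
Your proposal is correct and is essentially the paper's proof: repeatedly move an element that occurs in infinitely many of the remaining sets into $R$, pass to the subsequence on which it always occurs, and stop after at most $r$ steps because $R\subseteq C_k$. Your preliminary reduction to constant size $s$ is harmless but not needed, since $|R|\le|C_k|\le r$ already bounds the number of steps. Your phrasing ``some $x\notin R$ lies in infinitely many $B_k$'' is also slightly more precise than the paper's wording.
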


\begin{proof}
If some element occurs in infinitely many $C_k$, pass to a subsequence on which it occurs always and place it in $R$. Repeat. Each repetition increases $|R|$ by one, while every $C_k$ has at most $r$ elements, so the process stops after at most $r$ steps. At termination no element outside $R$ occurs infinitely often. Removing $R$ from the surviving supports gives the required $B_k$.
\end{proof}

For a real vertex $\alpha$, let $m_\alpha(x)$ be its coefficient in the unique cluster expansion of $x\in Y_e$ and define
\[
  U_\alpha=\{x\in Y_e:m_\alpha(x)>0\}.
\]

\begin{lemma}[Openness of coefficient stars]\label{lem:star-open}
Every $U_\alpha$ is open in $Y_e$.
\end{lemma}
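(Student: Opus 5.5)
The plan is a local argument: fix $x\in U_\alpha$ and produce a neighbourhood of $x$ in $Y_e$ contained in $U_\alpha$. Write $x=\sum_{\beta\in C}m_\beta(x)\beta$ for the unique cluster expansion, where $C$ is the support face, a real face containing $\alpha$. Let $\operatorname{St}(C)$ be the closed star of $C$ in the real fan, namely the union of the cones of all real faces containing $C$. Every point $y$ of $\operatorname{St}(C)$ has support containing some subset of a face $G\supseteq C$, and its expansion is a nonnegative combination of the vertices of $G$. Along segments inside a single cone $\Cone(G)$ the coefficients are linear functions. Hence $m_\alpha$ is continuous on each such cone, and by uniqueness of expansions these pieces agree on overlaps.

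The key claim is that $\operatorname{St}(C)$ is a neighbourhood of $x$ in $\Supp\operatorname{Fan}_e^{\re}(\Phi)$. Granted this, $m_\alpha$ is continuous near $x$ and $m_\alpha(x)>0$, so $m_\alpha>0$ on a neighbourhood. Intersecting with the sphere gives openness in $Y_e$.

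To prove the claim, I would argue by contradiction using the two preceding lemmas. Suppose points $x_k\to x$ lie in $Y_e$ but not in $\operatorname{St}(C)$, and let $C_k$ be their support faces, each of size at most $r$. Apply \cref{lem:bounded-extraction} to write $C_k=R\sqcup B_k$ after passing to a subsequence, with the elements of $B_k$ eventually leaving every finite set. Split $x_k=y_k+z_k$, where $y_k$ is supported on $R$ and $z_k$ on $B_k$.

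Suppose first that the $z_k$ do not tend to $0$. By \cref{lem:unique-accumulation}, the normalized roots in $B_k$ converge projectively to $[\delta]$, so $z_k$ has a nonzero limit direction on the ray $\mathbb R_{\ge0}\delta$. Then $x$ would be $\lim y_k+a\delta$ with $a>0$, and $\lim y_k$ lies in the closed cone on $R$. The point $x$ would then lie in a cone of the full fan involving $\delta$, or on the boundary of $I_e$. I expect to rule this out using $R\cup B_k$ compatible, together with \cref{lem:imaginary-deletion} and the compatibility of limits with $\delta$ (\cite[Proposition~5.6]{ReadingStella2020}). The intended route is to show that $x+\epsilon\delta$ stays real while $x$ has a positive $\delta$-coefficient, which is a contradiction.

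Suppose instead that $z_k\to0$. Then $y_k\to x$ inside the fixed finite cone $\Cone(R)$, which is closed. So $x\in\Cone(R)$ and $C\subseteq R$ by uniqueness of expansions. This puts $x_k$ in the cone of the face $C_k\supseteq C$, hence in $\operatorname{St}(C)$, a contradiction.

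The first alternative is the main obstacle, because it requires controlling real points whose supports escape to infinity near $\relint I_e$. The first thing I would try is to exploit that $x\notin\relint_{U^e}I_e$, so $x$ is strictly away from the $\delta$-star region. I would then show that $\delta$-accumulating mass cannot approach it.
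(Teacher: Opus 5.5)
Your skeleton matches the paper's: you argue sequentially, extract $C(x_k)=R\sqcup B_k$ by \cref{lem:bounded-extraction}, and dispose of the case where the escaping part vanishes in the limit using closedness of $\Cone(R)$ and uniqueness of expansions. That case is correct.

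\textbf{The gap.} The other case, which you flag as the main obstacle and leave open, is the whole content of the lemma. The missing idea is to identify the fixed part $R$.
\begin{enumerate}
\item Every vertex outside $R$ lies in only finitely many $B_k$, and the $B_k$ are nonempty. So each $\rho\in R$ is compatible with infinitely many distinct vertices.
\item By the finite-link clause of \cref{lem:unique-accumulation}, $\rho$ cannot be transjective. Hence $R\subseteq\Lambda_e^{\re}$.
\item Then \cite[Proposition~5.6]{ReadingStella2020} makes $R\cup\{\delta\}$ compatible, so $\Cone(R\cup\{\delta\})$ is a cone of the full fan.
\item Only now does a limit $x=y+a\delta$ with $y\in\Cone(R)$ and $a>0$ have positive $\delta$-coefficient in its unique expansion. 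This contradicts $x\in Y_e$ via \cref{eq:delta-coefficient}.
\end{enumerate}
Without this identification, your alternative ``a cone involving $\delta$, or the boundary of $I_e$'' cannot be resolved. If $R$ contained a transjective root, $y+a\delta$ could lie off $U^e$ inside the real support, and there would be no contradiction.

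Your suggested route is also not coherent. The point $x\in Y_e$ already has zero $\delta$-coefficient. Moreover, lying outside $\relint_{U^e}I_e$ does not keep $x$ strictly away from the $\delta$-star, since $x$ may lie in $\Cone(R)\subseteq\partial I_e$.

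\textbf{A silent step.} The same identification is needed for $y_k$ and $z_k$ to have limits at all. You need every root involved to be eventually positive. Infinite-orbit tails are eventually positive, and $\Lambda_e^{\re}$ consists of positive roots. Without this, both parts could be unbounded and cancel. The paper handles it by normalizing with a functional $L$ that is positive on the positive root cone, then passing to convergent convex weights $\lambda_{\rho,k}$ and $\lambda_{B,k}$.

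\textbf{The closed-star detour.} This is unnecessary, and as stated it is unjustified.
\begin{enumerate}
\item When $C\subseteq\Lambda_e^{\re}$, $\operatorname{St}(C)$ is an infinite union of cones that is not locally finite at $x$. So ``continuous on each cone and agreeing on overlaps'' does not give continuity of $m_\alpha$.
\item Membership of $x_k$ in the closed star does not by itself give $m_\alpha(x_k)>0$.
\end{enumerate}
Your own second-case conclusion $C\subseteq R\subseteq C_k$ already puts $\alpha$ in the support of $x_k$. So you should run the contradiction directly against $x_k\notin U_\alpha$, as the paper does.
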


\begin{proof}
Since $Y_e$ is metrizable, it suffices to prove sequential openness. Suppose $x_k\to x\in U_\alpha$ but $x_k\notin U_\alpha$. Let $C(y)=\{\beta:m_\beta(y)>0\}$. Each cluster has at most $r$ vertices. Apply \cref{lem:bounded-extraction} and pass to a subsequence with
\[
  C(x_k)=R\sqcup B_k,
\]
where $R$ is fixed and every vertex outside $R$ occurs in only finitely many $B_k$.

If $B_k$ is eventually empty, then $x_k\in\Cone(R)$ eventually. The cone is closed, so uniqueness of expansion gives $C(x)\subseteq R$, forcing $\alpha\in R\subseteq C(x_k)$, a contradiction.

Assume $B_k$ is nonempty. The normalized directions of all vertices in $B_k$ converge uniformly to $[\delta]$: otherwise there are $\varepsilon>0$ and choices $\beta_k\in B_k$ staying at projective distance at least $\varepsilon$ from $[\delta]$. Every individual vertex occurs only finitely often among the $\beta_k$, so $(\beta_k)$ leaves every finite subset, contradicting \cref{lem:unique-accumulation}. Every $\rho\in R$ is compatible with infinitely many distinct varying vertices and hence cannot be transjective, because transjective links are finite. Thus $R\subseteq\Lambda_e^{\re}$, and \cite[Proposition~5.6]{ReadingStella2020} makes $R\cup\{\delta\}$ compatible.

Choose a linear functional $L$ strictly positive on every nonzero vector in the positive root cone. After finitely many terms every varying root is positive, and the fixed roots are positive regular roots. Write
\[
  x_k=\sum_{\beta\in C(x_k)}a_{\beta,k}\beta,\qquad a_{\beta,k}>0,
\]
and set
\[
  \lambda_{\beta,k}=\frac{a_{\beta,k}L(\beta)}{L(x_k)}.
\]
Then $\sum_\beta\lambda_{\beta,k}=1$ and
\[
  \frac{x_k}{L(x_k)}
  =\sum_{\rho\in R}\lambda_{\rho,k}\frac{\rho}{L(\rho)}
   +\sum_{\beta\in B_k}\lambda_{\beta,k}\frac{\beta}{L(\beta)}.
\]
Pass to a further subsequence so that every fixed coefficient $\lambda_{\rho,k}$ and the total varying weight
\[
  \lambda_{B,k}=\sum_{\beta\in B_k}\lambda_{\beta,k}
\]
converge. Uniform projective convergence of the vertices in $B_k$, together with positivity of $L$, gives uniform convergence $\beta/L(\beta)\to\delta/L(\delta)$ on $B_k$. Therefore the varying sum converges to $\lambda_B\delta/L(\delta)$, and since $x_k\to x$ with $L(x)>0$ we obtain
\[
  \frac{x}{L(x)}
  =\sum_{\rho\in R}\lambda_\rho\frac{\rho}{L(\rho)}
    +\lambda_B\frac{\delta}{L(\delta)}
  \in\Cone(R\cup\{\delta\}).
\]
Hence $x\in\Cone(R\cup\{\delta\})$. Since $R\cup\{\delta\}$ is compatible, it is contained in a cluster and therefore spans a face cone of the full Reading--Stella fan \cite[Theorem~6.4]{ReadingStella2020}. Since $x\in Y_e$, \cref{eq:delta-coefficient} forces its $\delta$-coefficient to vanish. Thus $C(x)\subseteq R$, and again $\alpha\in R\subseteq C(x_k)$, a contradiction.
\end{proof}

\begin{proposition}[Coefficient-star good cover]\label{prop:good-cover}
The sets $U_\alpha$ form an open numerable good cover of $Y_e$, and its nerve is the abstract complex $\Delta_e^{\re}(\Phi)$.
\end{proposition}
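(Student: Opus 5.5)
We must show four things: the sets $U_\alpha$ cover $Y_e$, they are open, the cover is numerable, every nonempty finite intersection is contractible, and the nerve is $\Delta_e^{\re}(\Phi)$.

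\emph{Covering.} Take $x\in Y_e$. Then $x\neq0$, and by \cref{lem:imaginary-deletion} its unique cluster expansion has $\delta$-coefficient zero. Its support is therefore a nonempty set of real vertices, so $x\in U_\alpha$ for some $\alpha$.

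\emph{Openness.} This is \cref{lem:star-open}.

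\emph{Numerability.} $Y_e$ is a subspace of a sphere, hence metrizable and paracompact. Every open cover of a paracompact Hausdorff space is numerable, so nothing more is needed here.

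\emph{Nerve.} Let $A$ be a finite set of real vertices with $\bigcap_{\alpha\in A}U_\alpha\neq\varnothing$, and pick $x$ in the intersection. Then $A\subseteq C(x)$. The set $C(x)$ is a cluster face of the full fan made of real vertices, so $A$ is pairwise compatible and is a face of $\Delta_e^{\re}(\Phi)$. Conversely, if $A$ is a real face, the normalized barycentre of its vectors lies in $S(V)\cap\Cone(A)\subseteq Y_e$. By uniqueness of expansion, its support is exactly $A$, so the barycentre lies in every $U_\alpha$ with $\alpha\in A$. Finite intersections suffice here because faces are finite: clusters have at most $r$ vertices.

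\emph{Contractibility of intersections.} This is the main step. For a real face $A$, I would identify
\[
  U_A=\bigcap_{\alpha\in A}U_\alpha
\]
with the open star of $A$ in the geometric fan. Concretely, $U_A$ is the set of $x\in Y_e$ whose support face $C(x)$ contains $A$. It is a union of relatively open cones $\relint\Cone(F)$ over real faces $F\supseteq A$, intersected with the sphere.

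The plan is a straight-line contraction toward the barycentre $b_A$ of $A$. For $x\in U_A$, set $F=C(x)$. The segment from $x$ to $b_A$ stays inside $\Cone(F)$, because both endpoints lie in that cone. Along the open part of the segment the support stays exactly $F$, so every coefficient $m_\alpha$ with $\alpha\in A$ remains positive, and the support has no $\delta$. The segment also avoids $0$, since $\Cone(F)$ is pointed, and it never enters $\relint_{U^e}I_e$, since the support contains no $\delta$. Normalizing to the sphere then gives
\[
  H_t(x)=\frac{(1-t)x+tb_A}{\|(1-t)x+tb_A\|}.
\]

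The expected obstacle is continuity of $H$ in the Euclidean topology of $Y_e$. The coordinates are piecewise linear on a locally infinite fan near $[\delta]$, so continuity cannot be read off cone by cone. I would argue continuity sequentially, reusing the extraction argument of \cref{lem:star-open}. Suppose $x_k\to x$ in $U_A$. Then $C(x_k)=R\sqcup B_k$ with $A\subseteq R$, and the vertices in $B_k$ accumulate only at $[\delta]$. The homotopy is, however, simply the normalized convex combination $(1-t)x_k+tb_A$ of vectors in $V$. Since ambient vector-space operations are continuous, $H$ is continuous as a map into $S(V)$ without any reference to the fan structure. It therefore only remains to check that $H$ lands in $U_A$, and this was shown above cone by cone.

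Hence each nonempty $U_A$ is contractible, and the $U_\alpha$ form a numerable good cover with nerve $\Delta_e^{\re}(\Phi)$.
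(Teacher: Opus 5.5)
Your proposal is correct and is essentially the paper's own proof. It covers $Y_e$ by nonempty real support, takes openness from \cref{lem:star-open}, and identifies the nerve by uniqueness of cluster expansions, using the normalized face sum as witness (your barycentre is $q_F/|F|$, the same point after normalization). It gets contractibility from the same straight-line radial homotopy, whose continuity follows from the ambient formula, and numerability from paracompactness of the metric space $Y_e$, exactly as the paper does. The sequential-extraction detour you first consider for continuity is therefore unnecessary.
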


\begin{proof}
Every nonzero vector has nonempty cluster support, so the $U_\alpha$ cover $Y_e$. For a finite set $F$ of real vertices, put
\[
  U_F=\bigcap_{\alpha\in F}U_\alpha,\qquad q_F=\sum_{\alpha\in F}\alpha.
\]
Uniqueness of expansion gives
\begin{equation}\label{eq:nerve-faces}
  U_F\ne\varnothing\iff F\in\Delta_e^{\re}(\Phi).
\end{equation}
Indeed, if $x\in U_F$, all vertices of $F$ occur in its cluster support; conversely, if $F$ is a real face then $q_F/\|q_F\|\in U_F$.

When $U_F\ne\varnothing$, define
\begin{equation}\label{eq:star-contraction}
  H_t(x)=\frac{(1-t)x+tq_F}{\|(1-t)x+tq_F\|},\qquad0\le t\le1.
\end{equation}
The set $F$ lies in the support of $x$, so $x$ and $q_F$ lie in the same pointed cluster cone. The denominator is nonzero and every coefficient indexed by $F$ remains positive. Thus \cref{eq:star-contraction} contracts $U_F$ to $q_F/\|q_F\|$ inside $U_F$. Metric spaces are paracompact, so the open good cover is numerable.
\end{proof}

\begin{proof}[Proof of \cref{thm:appendix-contractibility}]
By \cref{prop:good-cover}, the numerable nerve theorem \cite[Corollary~4G.3]{Hatcher2002} gives
\[
  |\Delta_e^{\re}(\Phi)|\simeq Y_e.
\]
The target is contractible by \cref{lem:spherical-complement}.
\end{proof}

\begin{remark}[Weak and fan-support topologies]
The additional topological argument above is needed because the infinite abstract realization and the support of the real cluster fan carry different natural topologies: the weak topology and the Euclidean subspace topology, respectively. The coefficient-star cover provides the required homotopy comparison between them; the only affine limiting behavior involved is the accumulation of real roots at the projective direction $[\delta]$.
\end{remark}

\bibliographystyle{amsplain}
\bibliography{references}

\end{document}